\documentclass[11pt]{article}
\usepackage{amsfonts}
\usepackage{amssymb}
\usepackage{amsbsy}
\usepackage{amsthm}
\usepackage{graphicx}
\usepackage{color}


\newtheorem{thm}{Theorem}[section]
\newtheorem{cor}[thm]{Corollary}
\newtheorem{lem}[thm]{Lemma}
\newtheorem{prop}[thm]{Proposition}
\theoremstyle{definition}
\newtheorem{defn}[thm]{Definition}
\theoremstyle{remark}
\newtheorem{rem}[thm]{Remark}

\newcommand{\bt}{\begin{thm}}
\newcommand{\et}{\end{thm}}
\newcommand{\bc}{\begin{cor}}
\newcommand{\ec}{\end{cor}}
\newcommand{\bl}{\begin{lem}}
\newcommand{\el}{\end{lem}}
\newcommand{\bp}{\begin{prop}}
\newcommand{\ep}{\end{prop}}
\newcommand{\bd}{\begin{defn}}
\newcommand{\ed}{\end{defn}}
\newcommand{\br}{\begin{rem}}
\newcommand{\er}{\end{rem}}
\newcommand{\bpr}{\begin{proof}}
\newcommand{\epr}{\end{proof}}


\newcommand{\bi}{\begin{itemize}}
\newcommand{\ei}{\end{itemize}}
\newcommand{\be}{\begin{enumerate}}
\newcommand{\ee}{\end{enumerate}}
\newcommand{\ds}{\displaystyle}
\newcommand{\ba}{\begin{array}}
\newcommand{\ea}{\end{array}}
\newcommand{\beq}{\begin{equation}}
\newcommand{\eeq}{\end{equation}}
\newcommand{\beqa}{\begin{eqnarray}}
\newcommand{\eeqa}{\end{eqnarray}}


\newcommand{\N}{{\mathbb N}}
\newcommand{\Z}{{\mathbb Z}}
\newcommand{\Q}{{\mathbb Q}}
\newcommand{\R}{{\mathbb R}}
\newcommand{\C}{{\mathbb C}}
\newcommand{\T}{{\mathbb T}}
\newcommand{\D}{{\mathbb D}}
\newcommand{\PP}{{\mathbb P}}

\newcommand{\J}{{\mathbb J}}

\newcommand{\cC}{{\mathcal  C}}

\newcommand{\cX}{{\mathcal  X}}

\newcommand{\spn}{\mathrm{span}}

\newcommand{\re}{\mathrm{Re}}
\newcommand{\im}{\mathrm{Im}}
\newcommand{\adj}{\mathrm{Adj}}
\newcommand{\reg}{\mathrm{reg}}
\newcommand{\rank}{\mathrm{rank}}



\begin{document}

\title{\bf Polynomial perturbations of hermitian linear functionals and difference equations}
\author{M.J. Cantero, L. Moral, L. Vel\'azquez
\footnote{The work of the authors was partially supported by the Spanish grants from the Ministry of Education and Science, project code
MTM2005-08648-C02-01, and the Ministry of Science and Innovation, project code MTM2008-06689-C02-01, and by Project E-64 of Diputaci\'{o}n General de
Arag\'{o}n (Spain).}}
\date{\small Departamento de Matem\'atica Aplicada. Universidad de Zaragoza. Spain}
\maketitle

\begin{abstract}

This paper is devoted to the study of general (Laurent) polynomial modifications of moment functionals on the unit circle, i.e., associated with
hermitian Toeplitz matrices. We present a new approach which allows us to study polynomial modifications of arbitrary degree.

The main objective is the characterization of the quasi-definiteness of the functionals involved in the problem in terms of a difference equation
relating the corresponding Schur parameters. The results are presented in the general framework of (non necessarily quasi-definite) hermitian
functionals, so that the maximum number of orthogonal polynomials is characterized by the number of consistent steps of an algorithm based on the
referred recurrence for the Schur parameters.

Some concrete applications to the study of orthogonal polynomials on the unit circle show the effectiveness of this new approach: an exhaustive and
instructive analysis of the functionals coming from a general inverse polynomial perturbation of degree one for the Lebesgue measure; the
classification of those pairs of orthogonal polynomials connected by a kind of linear relation with constant polynomial coefficients; and the
determination of those orthogonal polynomials whose associated ones are related to a degree one polynomial modification of the original orthogonality
functional.

\end{abstract}

\noindent{\it Keywords and phrases}: Orthogonal polynomials, Hermitian functionals, Schur parameters.

\medskip

\noindent{\it (2000) AMS Mathematics Subject Classification}: 42C05.

\section{Introduction} \label{I}

The intense activity during the last decades around the theory of orthogonal polynomials on the unit circle has stimulated the study of perturbations
of hermitian functionals. The possibility of considering modifications that do not preserve the hermitian character of the functional leads to left
and right orthogonal polynomials (see \cite{B}), thus most of the efforts have been concentrated in the analysis of hermitian perturbations as a
source of new families of standard orthogonal polynomials (see the recent monograph on orthogonal polynomials on the unit circle \cite{Si105,Si205})
and the references therein).

This paper proposes a new method to study the hermitian modifications obtained when multiplying a hermitian functional by a Laurent polynomial of any
degree, in short, the hermitian polynomial modifications. This kind of perturbation has been considered previously (see for instance
\cite{ACMV,Ca,CaGaMa,DaHeMa07,GaHeMa08,GaHeMa09,Ga09,GaMa1,GaMa2,GaMa3,God,Go91,GodMarc,LiMa99,MaHe05,MaHe08,Su93}), but the usual approaches have
the drawback of being formulated in terms of orthogonal polynomials, kernels and determinants, what makes difficult the practical application,
specially for perturbations of high degree.

On the contrary, our method is based on a recurrence for the Schur parameters of the two functionals involved in the perturbation. This provides an
algorithm to generate the Schur parameters of one of the functionals, starting from the Schur parameters of the other functional. Furthermore, this
recurrence yields a characterization of the maximum number of orthogonal polynomials for one of the functionals, given the number of orthogonal
polynomials that the other functional has. That is, this approach permits us to study the relation between the quasi-definiteness of a functional and
a polynomial modification of any degree with less computational effort than the methods already existing.

We distinguish between three different but related problems, depending on the data at hand:
\begin{itemize}
\item Basic problem: characterize when two functionals are related by a polynomial perturbation in terms of their Schur parameters.
\item Direct problem: characterize the quasi-definiteness of a polynomial modification from the Schur parameters of the original functional.
\item Inverse problem: characterize the quasi-definiteness of a functional from the Schur parameters of one of its polynomial modifications.
\end{itemize}

Despite the symmetry between the direct and inverse problems, they have a quite different nature which makes much more interesting the last one. The
root of this difference is the fact that, given a functional and a Laurent polynomial, the corresponding polynomial modification is uniquely defined
while there are infinitely many functionals whose modification is the given one. This leads to a rich structure in the set of solutions of the
inverse problem which, as we will see, is related to another kind of interesting modifications: the addition of Dirac deltas and its derivatives.
Hence, any information about inverse polynomial modifications can be translated as a result on perturbations by Dirac deltas.

Furthermore, as an example will show, some special solutions of an inverse problem can act as ``attractors" for the asymptotics of the parameters of
other solutions. Thus, the analysis of those special solutions provides information about the asymptotics of perturbations by Dirac deltas.

The rich structure of the inverse problem has a double interest due to the fact that our approach, based on a recurrence for the Schur parameters,
also yields interesting connections between the study of polynomial modifications and difference equations. Therefore, the asymptotics of the
solutions of the inverse problem is closely related to the asymptotics of difference equations.

The content of the paper is structured in the following way: the rest of the introduction summarizes the basic definitions and notations; Section
\ref{HPM} includes the main results about hermitian polynomial modifications, i.e., it is devoted to what we call basic problem; direct and inverse
problems are discussed in Section \ref{DIP}, including an exhaustive analysis of an explicit example of inverse problem; and Section \ref{A} shows
other applications of the techniques developed in the paper, i.e., a complete classification of the pairs of orthogonal polynomials related by
certain type of linear relations with constant polynomial coefficients, and the determination of the orthogonal polynomials whose associated ones
come from a polynomial modification of degree one of the original orthogonality functional.

Now we proceed with the conventions for the notation.

In what follows $\T:=\{z\in\C : |z|=1\}$ and $\D:=\{z\in\C : |z|<1\}$ are called respectively the unit circle and the open unit disk on the complex
plane. $\PP:=\C[z]$ is the complex vector space of polynomials with complex coefficients, and $\PP_n$ the vector subspace of polynomials whose degree
is not greater than $n$, while $\PP_{-1} := \{0\}$ is the trivial subspace. $\Lambda:= \C[z, z^{-1}] $ is the complex vector space of Laurent
polynomials and, for $m\leq n$, we define the vector subspace $\Lambda_{m,n}:= \spn\{ z^m, z^{m+1},\dots, z^n \}$. Given any $f\in\Lambda$ we define
$f_*(z)=\overline{f}(z^{-1})$ and, if $p\in\PP_n\setminus\PP_{n-1}$, $p^*$ denotes its reversed polynomial $p^*(z) = z^n p_*(z)$. Sometimes we use
the notation $p^*(z)=z^np_*(z)$ for polynomials $p\in\PP_n$ whose degree can be smaller than $n$. Then we refer to the $*_n$ operator when it is
advisable to avoid misunderstandings.

Any hermitian linear functional $v$ on $\Lambda$ ($v[z^{-n}] = \overline{v[z^n]}$, $n=0,1,\dots$) defines a sesquilinear functional
$(\cdot\;,\cdot)_v\colon\Lambda\times\Lambda\longrightarrow\C$ by
$$
(f,g)_v:= v[f_*g], \qquad f,g\in\Lambda.
$$
The sequence $(p_n)_{n\geq 0}$ is a sequence of orthogonal polynomials with respect to the hermitian linear functional $v$ if
\begin{itemize}
\item[(i)] $p_n\in\PP_n\setminus\PP_{n-1}$,
\item[(ii)] $(p_n, p_m)_v = l_n \delta_{n,m}, \quad l_n\neq 0$,
\end{itemize}
and when such a sequence exists $v$ is called a quasi-definite functional. If $v[1]\neq0$ we can assure only the existence of a finite segment of
orthogonal polynomials, i.e., a finite set $(p_k)_{k=0}^n$ of polynomials satisfying (i) and (ii). When $v$ has a finite segment of orthogonal
polynomials $(p_k)_{k=0}^n$ of length $n+1$ we say that $v$ is quasi-definite on $\PP_n$.

In the positive definite case ($l_n>0$, $n=0,1,\dots$) there exists a positive measure $\mu$ supported on $\T$ providing an integral representation
for the functional $v$,
$$
v[f] = \int_{\T} f(z) d\mu(z), \qquad f\in\Lambda.
$$
Due to this reason a sequence $(p_n)_{n\geq0}$ satisfying (i) and (ii) is called a sequence of orthogonal polynomials on the unit circle, even in the
general quasi-definite case. If $l_n=\pm 1$ for all $n$, $(p_n)$ is called a sequence of orthonormal polynomials on the unit circle. We denote by
$(\hat{p}_n)_{n\geq 0}$ the orthonormal polynomials with positive leading coefficients.

In that follows $(\psi_n)_{n\geq0}$ denotes the sequence of monic orthogonal polynomials (MOP) with respect to a hermitian functional $v$. Two
hermitian linear functionals $v_1, v_2$ have a common finite segment $(\psi_j)_{j=0}^n$ of MOP iff there exists $\lambda\in\R^*$ such that $v_1[f] =
\lambda v_2[f]$ for any $f\in\Lambda_{-n,n}$, although requiring this condition to hold only for any $f\in\PP_n$ is enough due to the hermiticity. In
this case we say that $v_1$ and $v_2$ are equivalent in $\PP_n$ or, in a more symbolic way, $v_1\equiv v_2$ in $\PP_n$. If this holds for any $n$, we
simply say that $v_1$ and $v_2$ are equivalent and we write $v_1 \equiv v_2$.

A sequence $(\psi_n)$ is a sequence of MOP on the unit circle iff it satisfies the recurrence relation (see \cite{Sz,Ge2,Si105})
\begin{equation} \label{RR}
\psi_n(z) = z \psi_{n-1}(z) + \psi_n(0) \psi_{n-1}^*(z), \qquad n=1,2\dots,
\end{equation}
with $\psi_0(z)=1$ and $|\psi_n(0)|\neq1$ for $n\geq1$. Applying the $*_n$ operator to the above recurrence we get the equivalent one
\begin{equation} \label{RRR}
\psi_n^*(z) = \overline{\psi_n(0)} z \psi_{n-1}(z) + \psi_{n-1}^*(z), \qquad n=1,2\dots.
\end{equation}
The values $\psi_n(0)$ are called the Schur parameters or reflection coefficients of the hermitian linear functional $v$.

A straightforward computation yields
$$
1-|\psi_n(0)|^2 = {\varepsilon_n \over \varepsilon_{n-1}}, \qquad n=1,2,\dots,
$$
where $\varepsilon_n:=(\psi_n,\psi_n)_v=v[\psi_nz^{-n}]$ relates $\hat{p}_n$ and $\psi_n$ by $\hat{p}_n = |\varepsilon_n|^{-{1\over 2}} \psi_n.$ When
$v$ is positive definite $\varepsilon_n=\|\psi_n\|^2_{L^2(\mu)} > 0$ for $n\geq0$, which means that $|\psi_n(0)|<1$ for $n\geq1$.

\section{Hermitian polynomial modifications} \label{HPM}

We are interested in those (Laurent) polynomial modifications of hermitian functionals which preserve their hermitian character, in short, the
hermitian polynomial modifications of hermitian functionals. If $v$ is a linear functional on $\Lambda$ and $L\in\Lambda$ the modified functional
$vL$ is defined by
$$
vL[f]:=v[Lf], \qquad f\in\Lambda.
$$
The modified functional $vL$ is hermitian for every hermitian $v$ iff $L_*=L$, which is equivalent to state that $L=P+P_*$ with $P\in\PP$ (see
\cite{AM}). Such a polynomial $P$ can be uniquely determined by $L$ simply requiring $P(0)\in\R$, a convention that we will assume in what follows.
We will refer to $\deg P$ as the degree of the polynomial modification, which we will consider greater than or equal to one, and $L$ will be called a
hermitian Laurent polynomial of degree $r$.

Another way to characterize a hermitian polynomial modification is through the polynomial $A=z^{\deg P}L$ of degree $2\deg P$. The condition $L_*=L$
means that $A$ is self reciprocal, i.e., $A^*=A$. Thus the hermitian polynomial modifications are related to the self-reciprocal
polynomials of even degree.

The set of roots of a self-reciprocal polynomial, counting the multiplicity, is invariant under the transformation $\zeta \to 1/\overline{\zeta}$.
That is, their roots lie on the unit circle or appear in symmetric pairs $\zeta,1/\overline{\zeta}$. Indeed, this property characterizes the
self-reciprocal polynomials up to numerical factors. This implies that any self-reciprocal polynomial of even degree factorizes into a product of
self-reciprocal polynomials of degree 2. As a consequence, an arbitrary hermitian polynomial modification is a composition of elementary ones of
degree 1, i.e., if $L=P+P_*$ with $\deg P = r$, then $L = L_1 L_2 \cdots L_r$ with $L_k=P_k+P_{k*}$ and $\deg P_k=1$.

Sometimes we will deal with polynomials $A\in\PP_n$ whose degree is not necessarily $n$ but such that $A^{*_n}=A$. In this case we will say that $A$
is self-reciprocal in $\PP_n$ to avoid misunderstandings. Such a polynomial has the general form $A(z)=z^sB(z)$ where $B$ is strictly
self-reciprocal. Thus, a self-reciprocal polynomial in $\PP_n$ is actually self-reciprocal iff it has no zeros at the origin.

Given a hermitian functional $v$ and a Laurent polynomial $L=P+P_*$, our purpose is to obtain relations between the MOP and Schur parameters
associated with the functionals $v$ and $vL$. Multiplying $L$ by a non null real factor gives rise to a hermitian functional which is equivalent to
$vL$ and, hence, with the same MOP and Schur parameters as $vL$. Therefore, concerning our aim, the Laurent polynomial $L$, as well as the
polynomials $P$ and $A$, are defined up to non null real factors.

The following general result will be useful to achieve our objective. In what follows we denote by $S^{\bot_n}$ the orthogonal complement in $\PP_n$
of a subspace $S \subset \PP_n$.

\begin{lem}[see \cite{CTasis}] \label{BASISOC}

Let $v$ be a hermitian functional such that the corresponding $n$-th MOP $\psi_n$ exists. Then, $\mathfrak{B}= \{z^k\psi_n\}_{k=0}^r \cup
\{z^k\psi_n^*\}_{k=0}^{r-1}$ is a basis of  $\bigl(z^r\mathbb{P}_{n-r-1} \bigr)^{\bot_{n+r}}$ for $n \geq r \geq 1$, and a generator system of
$\PP_{n+r}$ for $r > n \geq 0$.

\end{lem}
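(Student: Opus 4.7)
The plan is to split into the two regimes $n \geq r \geq 1$ and $r > n \geq 0$. In the first I would (a) check $\mathfrak{B} \subset (z^r\PP_{n-r-1})^{\bot_{n+r}}$, (b) match $|\mathfrak{B}| = 2r+1$ against the dimension of the complement, and (c) establish linear independence by induction on $r$. The second regime then follows by a separate induction, bootstrapped by the boundary case $r = n$ of the first.

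For (a), the pairing $(z^{r+i}, z^k\psi_n)_v = v[z^{k-r-i}\psi_n]$ has exponent in $\{-(n-1),\dots,0\}$ for all admissible $k,i$, so it vanishes by orthogonality of $\psi_n$ to $\PP_{n-1}$. For the $\psi_n^*$-part, hermiticity combined with $(\psi_n^*)_* = z^{-n}\psi_n$ rewrites $(z^{r+i}, z^l\psi_n^*)_v$ as $\overline{v[z^{r+i-l-n}\psi_n]}$, whose exponent likewise lies in $\{-(n-1),\dots,-1\}$ and so gives zero. For (b), the Gram matrix pairing $\{z^r,\dots,z^{n-1}\}$ against $\{1,z,\dots,z^{n+r}\}$ restricts on the columns $z^r,\dots,z^{n-1}$ to a square Toeplitz block equal to the one governing the existence of the MOPs up to index $n-r-1$, hence nonsingular by the hypothesis that $\psi_n$ exists; therefore $\dim(z^r\PP_{n-r-1})^{\bot_{n+r}} = (n+r+1)-(n-r) = 2r+1$.

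For (c) I would induct on $r$. In the base case $r=1$, comparing the coefficients of $z^{n+1}$, $z^n$, and $z^0$ in a vanishing relation $\alpha_0\psi_n + \alpha_1 z\psi_n + \beta_0\psi_n^* = 0$, together with $\psi_n^*(0) = 1$ and the leading coefficient $\overline{\psi_n(0)}$ of $\psi_n^*$, yields $\alpha_0(1-|\psi_n(0)|^2)=0$, forcing $\alpha_0=0$ by the constraint $|\psi_n(0)| \neq 1$. For the inductive step, matching the leading monomial $z^{n+r}$ kills the coefficient of $z^r\psi_n$ immediately; then pairing the remainder with $z^{r-1}$ via $(z^{r-1},\cdot)_v$ kills every term except $\beta_{r-1}z^{r-1}\psi_n^*$ (by the same orthogonality computation from (a) extended one index), leaving $\beta_{r-1}\overline{\varepsilon_n} = 0$, hence $\beta_{r-1}=0$ since $\varepsilon_n\neq 0$; the inductive hypothesis then disposes of the rest.

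For the second regime, $n=0$ is trivial since $\psi_0 = \psi_0^* = 1$ makes $\mathfrak{B}$ the set of monomials up to degree $r$. For $n \geq 1$, the first regime supplies $\mathfrak{B}$ as a basis of $\PP_{2n}$ at $r = n$; for $r > n$ I would induct using the decomposition $\mathfrak{B}_r = z\cdot\mathfrak{B}_{r-1} \cup \{\psi_n, \psi_n^*\}$, where the shift $z\cdot\mathfrak{B}_{r-1}$ already spans $z\PP_{n+r-1}$ by hypothesis and $\psi_n^*(0)=1$ supplies the missing constant-term direction. The main obstacle I anticipate is the linear-independence step (c): a naive divisibility attack on $P\psi_n + Q\psi_n^* = 0$ in $\C[z]$ fails because $\psi_n$ and $\psi_n^*$ can share zeros on $\T$ in the general quasi-definite case, so one cannot simply invoke coprimality; the induction-plus-pairing trick circumvents this by relying only on $v[\psi_n^*] = \overline{\varepsilon_n}\neq 0$, a direct consequence of the Schur-parameter constraint.
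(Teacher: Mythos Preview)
Your argument is correct, but the obstacle you flag is illusory: $\gcd(\psi_n,\psi_n^*)=1$ \emph{does} hold in the quasi-definite case, and this is precisely what the paper uses for step (c). If $\psi_n(\zeta)=\psi_n^*(\zeta)=0$ then $\zeta\neq 0$ (since $\psi_n^*(0)=1$), and the transfer matrix $\mathcal{T}_n(\zeta)$ has determinant $\zeta(1-|b_n|^2)\neq 0$; the recurrence then forces $\psi_{n-1}(\zeta)=\psi_{n-1}^*(\zeta)=0$, and descending to $\psi_0=1$ gives a contradiction. So the divisibility attack you discarded actually works: $C\psi_n+D\psi_n^*=0$ with $\deg D\leq r-1<n$ forces $\psi_n\mid D$, hence $D=0$ and $C=0$.

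With coprimality in hand, the paper dispatches (c) in one line, whereas your induction-plus-pairing route, though valid, is a genuine detour. For the second regime the paper is also more direct: starting from the $r=n$ basis of $\PP_{2n}$ it simply adjoins $z^{n+1}\psi_n,\dots,z^r\psi_n$, whose degrees exceed $2n$, to obtain $n+r+1$ linearly independent vectors in $\PP_{n+r}$, hence a basis already contained in $\mathfrak{B}$. Your shift-and-add induction $\mathfrak{B}_r=z\mathfrak{B}_{r-1}\cup\{\psi_n,\psi_n^*\}$ reaches the same conclusion by a different path. Parts (a) and (b) agree with the paper. It is worth noting that the coprimality of $\psi_n$ and $\psi_n^*$ is not incidental here: the paper relies on it repeatedly afterwards (e.g.\ in Lemma~\ref{MATRICES} and the proof of Theorem~\ref{DIRECTPROBLEMTHMF}), so you will want to have it available rather than work around it.
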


\noindent {\it Sketch of the proof.} If $n \geq r \geq 1$, the orthogonality of $\psi_n$ assures that $\mathfrak{B} \subset
\bigl(z^r\mathbb{P}_{n-r-1} \bigr)^{\bot_{n+r}}.$ Besides, $\Omega \in \spn \mathfrak{B}$ iff $\Omega = C \psi_n + D \psi_n^*$, $C\in\PP_r$,
$D\in\PP_{r-1}$. Furthermore, this decomposition is unique because $\gcd (\psi_n, \psi_n^*) = 1,$ which proves the linear independence of
$\mathfrak{B}.$ Then, the first result follows from the fact that $\sharp\mathfrak{B} = 2r+1 = \dim \bigl(z^r\mathbb{P}_{n-r-1} \bigr)^{\bot_{n+r}}$.

Suppose now that $r > n \geq0$. From the previous result we know that $\{z^k\psi_n\}_{k=0}^n \cup \{z^k\psi_n^*\}_{k=0}^{n-1}$ is a basis of
$\mathbb{P}_{2n}$. Hence, $\{z^k\psi_n\}_{k=0}^r \cup \{z^k\psi_n^*\}_{k=0}^{n-1}$ is a linear independent subset of $\PP_{n+r}$ with $n+r+1$
elements, thus it is a basis of $\PP_{n+r}$, which proves the second result. \hfill{$\Box$}

\medskip

Our interest in the previous lemma is the following direct consequence.

\begin{cor} \label{BASISC}

Let $v$ be a hermitian functional such that the corresponding $n$-th MOP $\psi_n$ exists. Then, every polynomial
$\Omega\in\bigl(z^r\mathbb{P}_{n-r-1} \bigr)^{\bot_{n+r}}$ has a unique decomposition $\Omega = C \psi_n + D \psi_n^*$, $C\in\PP_r$, $D\in\PP_{r-1}$,
for $n \geq r \geq 1$, and every polynomial $\Omega\in\PP_{n+r}$ has infinitely many such decompositions for $r > n \geq 0$.

\end{cor}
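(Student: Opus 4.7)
The plan is to read the corollary off directly from Lemma \ref{BASISOC} by reformulating the basis/generator statements in the language of the decomposition $\Omega = C\psi_n + D\psi_n^*$. The bridge is the linear map
$$
\Phi \colon \PP_r \times \PP_{r-1} \longrightarrow \PP_{n+r}, \qquad \Phi(C,D) = C\,\psi_n + D\,\psi_n^*,
$$
whose image is precisely $\spn \mathfrak{B}$ with $\mathfrak{B}$ as in the lemma, since writing $C(z)=\sum_{k=0}^{r} c_k z^k$ and $D(z)=\sum_{k=0}^{r-1} d_k z^k$ one obtains
$$
C\,\psi_n + D\,\psi_n^* = \sum_{k=0}^{r} c_k\, z^k\psi_n + \sum_{k=0}^{r-1} d_k\, z^k\psi_n^*.
$$

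For the first case $n \geq r \geq 1$, I would invoke Lemma \ref{BASISOC} to say that $\mathfrak{B}$ is a basis of $\bigl(z^r\PP_{n-r-1}\bigr)^{\bot_{n+r}}$. Linear independence of $\mathfrak{B}$ translates into injectivity of $\Phi$, and the fact that it spans the orthogonal complement gives that the image of $\Phi$ equals $\bigl(z^r\PP_{n-r-1}\bigr)^{\bot_{n+r}}$. Hence $\Phi$ is a bijection onto this subspace, so every $\Omega \in \bigl(z^r\PP_{n-r-1}\bigr)^{\bot_{n+r}}$ has a unique preimage $(C,D)$, i.e., a unique decomposition of the claimed form.

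For the second case $r > n \geq 0$, the lemma provides that $\mathfrak{B}$ generates $\PP_{n+r}$, which is the same as saying $\Phi$ is surjective onto $\PP_{n+r}$; thus every $\Omega \in \PP_{n+r}$ admits at least one decomposition. To upgrade ``at least one'' to ``infinitely many'', I would do a routine dimension count:
$$
\dim \ker \Phi = \dim(\PP_r \times \PP_{r-1}) - \dim \PP_{n+r} = (2r+1) - (n+r+1) = r - n \geq 1.
$$
Since we work over $\C$, a non-trivial kernel is infinite, and any particular decomposition $(C_0,D_0)$ of $\Omega$ can be translated by any element of $\ker\Phi$ to yield infinitely many further decompositions.

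I do not anticipate a genuine obstacle: all the content is already encoded in Lemma \ref{BASISOC}, and the only subtlety is to keep the two cases cleanly separated by identifying ``basis'' with ``$\Phi$ bijective onto the orthogonal complement'' in the first case, and ``generator system'' with ``$\Phi$ surjective with positive-dimensional kernel'' in the second case.
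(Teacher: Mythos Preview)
Your proposal is correct and follows exactly the approach the paper intends: the paper states the corollary as a ``direct consequence'' of Lemma~\ref{BASISOC} without a separate proof, and your linear map $\Phi$ simply makes explicit the identification $\spn\mathfrak{B}=\{C\psi_n+D\psi_n^*:C\in\PP_r,\,D\in\PP_{r-1}\}$ already used in the lemma's sketch. The dimension count for the kernel in the case $r>n$ is the natural way to justify ``infinitely many'', and the paper later invokes precisely this count (see the discussion after Theorem~\ref{EQGENERALPROBLEMTHMF}, where $\rank(\mathfrak{B})=j+r+1$ yields an affine space of solutions of dimension $r-j$).
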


\begin{rem} \label{BASISNR}

It is worth it to remark the case $n=r$ in the above corollary, which says that every polynomial $\Omega\in \mathbb{P}_{2r}$ admits a unique
decomposition $\Omega = C \psi_r + D \psi_r^*$, $C\in\PP_r$, $D\in\PP_{r-1}$.

\end{rem}

The next theorem is the starting point for our approach to the study of hermitian polynomial modifications of hermitian functionals.

\begin{thm} \label{GENERALPROBLEMTH}

Let $u$, $v$ be hermitian functionals with finite segments of MOP $(\varphi_j)_{j=0}^n$, $(\psi_j)_{j=0}^{n+r}$ respectively, and let
$L=P+P_*=z^{-r}A$ with $P$ a polynomial of degree $r$. Then, the following statements are equivalent:
\begin{itemize}
\item[(i)] $u\equiv vL$ in $\PP_n$.
\item[(ii)] There exist $C_j\in\PP_r$, $D_j\in\PP_{r-1}$ with $C_j(0)\neq 0$ such that
\begin{equation} \label{GPOLO}
A \varphi_j = C_j \psi_{j+r} + D_j \psi_{j+r}^*, \qquad j=0,\dots,n.
\end{equation}
\item[(iii)] There exist $C_j\in\PP_r$, $D_j\in\PP_{r-1}$ with $C_j(0)\neq 0$ such that
\begin{equation} \label{GPOLO1}
A \varphi_j^* =  z D_j^* \psi_{j+r} + C_j^* \psi_{j+r}^*, \qquad D_j^*=D_j^{*_{r-1}}, \qquad j=0,\dots,n.
\end{equation}
\end{itemize}
The polynomials $C_j\in\PP_r$, $D_j\in\PP_{r-1}$ satisfying $(\ref{GPOLO})$ or $(\ref{GPOLO1})$ are unique, $\deg C_j = r$, $C_j(0)\in\R$ and
$C_j^*(0)=A(0)$.

\end{thm}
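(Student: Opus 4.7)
My plan is to use Corollary \ref{BASISC} as the anvil: it already gives uniqueness of the decomposition $C_j \psi_{j+r} + D_j \psi_{j+r}^*$ for free, provided one certifies $A\varphi_j \in (z^r \PP_{j-1})^{\bot_{j+2r}}$, so the real work is matching this orthogonality condition with $u \equiv vL$ in $\PP_n$. First I would dispatch (ii)$\Leftrightarrow$(iii) purely algebraically: applying the $*_{j+2r}$ operator to (\ref{GPOLO}) and using $A^{*_{2r}} = A$, the multiplicativity $(fg)^{*_{n+m}} = f^{*_n} g^{*_m}$, and the degree-slack identity $(D_j \psi_{j+r}^*)^{*_{j+2r}} = z\, D_j^{*_{r-1}} \psi_{j+r}$ (which absorbs the fact that $\deg(D_j \psi_{j+r}^*) \le j+2r-1$), one gets (\ref{GPOLO1}) with the same $C_j, D_j$; reversing the step gives the converse. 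From this point on the two forms may be used interchangeably.

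For (i)$\Rightarrow$(ii), the bridge is the identity
\[
(A \varphi_j, z^{r+k})_v = v[A_* \varphi_{j*} z^{r+k}] = v[L \varphi_{j*} z^k] = (\varphi_j, z^k)_{vL},
\]
which follows from $A = z^r L$ and $A_* = z^{-2r} A$. Since $u \equiv vL$ in $\PP_n$ and $\varphi_j$ is an MOP of $u$, this vanishes for $k = 0, \dots, j-1$, so $A \varphi_j \in (z^r \PP_{j-1})^{\bot_{j+2r}}$ and Corollary \ref{BASISC} supplies the unique decomposition. For the converse (ii)$\Rightarrow$(i) I run the argument backwards: substituting (\ref{GPOLO}) into $(z^k, \varphi_j)_{vL} = v[z^{-r-k} A \varphi_j]$, the exponents of $z$ accompanying $\psi_{j+r}$ range over $\{-r-k, \dots, -k\}$ and those accompanying $\psi_{j+r}^*$ over $\{-r-k, \dots, -k-1\}$, and for $k \leq j-1$ both ranges sit inside the orthogonality windows of $\psi_{j+r}$ and $\psi_{j+r}^*$ respectively, giving orthogonality of $\varphi_j$ to $\PP_{j-1}$ under $vL$. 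A parallel computation starting from (\ref{GPOLO1}) to evaluate $(\varphi_j, \varphi_j)_{vL}$ collapses all but one monomial and produces the key identity
\[
(\varphi_j, \varphi_j)_{vL} = \overline{C_j(0)}\, \varepsilon_{j+r}^v,
\]
which is nonzero thanks to $C_j(0) \neq 0$. Hence $\varphi_j$ is the $j$-th MOP of $vL$; with the first $n+1$ Schur parameters of $u$ and $vL$ therefore coinciding, the recurrence $\varepsilon_j = \varepsilon_{j-1}(1 - |\varphi_j(0)|^2)$ forces $\varepsilon_j^u / \varepsilon_j^{vL}$ to equal the constant $u[1]/vL[1] \in \R^*$, yielding $u \equiv vL$ in $\PP_n$.

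The trailing assertions fall out as immediate byproducts. Uniqueness is part of Corollary \ref{BASISC}. Matching leading coefficients in (\ref{GPOLO}) gives $\deg C_j = r$, because $D_j \psi_{j+r}^*$ reaches at most degree $j+2r-1$, forcing $C_j$ to supply the nonzero leading coefficient of $A$. Evaluating (\ref{GPOLO1}) at $z = 0$ with $\varphi_j^*(0) = \psi_{j+r}^*(0) = 1$ delivers $C_j^*(0) = A(0)$. Finally, $C_j(0) \in \R$ is forced by the displayed formula for $(\varphi_j, \varphi_j)_{vL}$, whose left-hand side is real by hermiticity of $vL$ and whose factor $\varepsilon_{j+r}^v$ is a nonzero real. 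The main obstacle I foresee is not conceptual but bookkeeping: one must track four reversal operators ($*_r, *_{r-1}, *_{j+r}, *_{j+2r}$) and the slightly asymmetric orthogonality windows of $\psi_{j+r}$ and $\psi_{j+r}^*$; the clean path through is the bridge $(A \varphi_j, z^{r+k})_v = (\varphi_j, z^k)_{vL}$, which defers all the analytic content to Corollary \ref{BASISC}.
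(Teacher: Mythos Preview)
Your proposal is correct and follows essentially the same route as the paper: the bridge identity $(A\varphi_j,z^{r+k})_v=(\varphi_j,z^k)_{vL}$ is exactly the paper's computation $u[\varphi_j z^{-k}]=\lambda v[A\varphi_j z^{-(k+r)}]$ rewritten in sesquilinear notation, and both proofs then feed Corollary~\ref{BASISC} to obtain existence and uniqueness of the decomposition. The only cosmetic differences are that you compute $(\varphi_j,\varphi_j)_{vL}=\overline{C_j(0)}\,\varepsilon_{j+r}$ via (\ref{GPOLO1}) where the paper computes $vL[\varphi_j z^{-j}]=C_j(0)\,\varepsilon_{j+r}$ via (\ref{GPOLO}) (these are complex conjugates, so both yield $C_j(0)\in\R$), and that you spell out the Schur-parameter/$\varepsilon$-ratio argument for $u\equiv vL$ whereas the paper simply invokes its stated ``iff'' characterization of equivalence in $\PP_n$.
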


\begin{proof}
The equivalence between (ii) and (iii) follows from the use of the $*_{2r+j}$ operator and the fact that $A$ is a self-reciprocal polynomial of
degree $2r$. Also, assuming (ii) we get $\deg C_j = r$ because $\deg(D_j\psi_{j+r}^*)<\deg(A\varphi_j)=2r+j$, and the equality
$(\varphi_j,\varphi_j)_u = u\bigl[\varphi_j z^{-j}\bigl] = C_j(0) \varepsilon_{j+r}$ implies $C_j(0)\in\R$. On the other hand, evaluating
(\ref{GPOLO1}) at $z=0$ we find that $C_j^*(0)=A(0)$. It only remains to prove the equivalence between (i) and (ii) and the uniqueness of
decomposition (\ref{GPOLO}).

Suppose (i), i.e., $u[f] = \lambda vL[f]$, $\lambda\in\R^*$, for any $f\in\Lambda_{-n,n}$. The orthogonality of $(\varphi_j)_{j=0}^n$ with respect to
$u$ gives
$$
0 = u\bigl[ \varphi_j z^{-k}\bigr] = \lambda v\bigl[A \varphi_j z^{-(k+r)}\bigr], \qquad r\leq k+r \leq j+r-1,
$$
which means that $A \varphi_j \in \bigl(z^r\mathbb{P}_{j-1} \bigr)^{\bot_{2r+j}}$ with respect to $v$. Using Corollary \ref{BASISC} we get
(\ref{GPOLO}) and the uniqueness of the polynomials $C_j$, $D_j$.

On the other hand, if $(\varphi_j)_{j=0}^n$, $(\psi_j)_{j=0}^{n+r}$ satisfy (\ref{GPOLO}), the orthogonality of $(\psi_j)_{j=0}^{n+r}$ with respect
to $v$ yields
$$
vL[\varphi_j  z^{-k}\bigl] = v\bigl[A\varphi_j  z^{-(k+r)}\bigl]= v\bigl[\bigl(C_j \psi_{j+r} + D_j \psi_{j+r}^*\bigr) z^{-(k+r)}\bigr] = 0
$$
for $0\leq k\leq j-1$ and
$$
vL\bigl[\varphi_j z^{-j}\bigl] = v\bigl[A\varphi_j z^{-(j+r)}\bigl] = v\bigl[\bigl(C_j \psi_{j+r} + D_j \psi_{j+r}^*\bigr)z^{-(j+r)}\bigr] =
C_j(0)\varepsilon_{j+r}.
$$
So, $C_j(0)\neq0$ for $j=0,\dots,n$ iff $(\varphi_j)_{j=0}^n$ is a finite segment of MOP with respect to $vL$, which means that $u \equiv vL$ in
$\PP_n$.
\end{proof}

Equality (\ref{GPOLO1}) is true taking $D_j^* = D_j^{*_{r-1}}$, no matter whether $D_j$ has degree $r-1$ or not. In what follows we will assume this
convention for the polynomials $D_j$.

\begin{rem} \label{GENERALPROBLEMRMRK}

The functional $u$ has a finite segment of MOP of length (at least) one iff $u[1] \neq 0$. Therefore, Theorem \ref{GENERALPROBLEMTH} assures that the
condition $v[L] \neq 0$ is equivalent to the existence of a (unique) decomposition
\begin{equation} \label{INITIALCONDITION}
A = C_0 \psi_r + D_0 \psi_r^*, \qquad C_0\in\PP_r, \qquad D_0\in\PP_{r-1},
\end{equation}
with $C_0(0) \neq 0$. However, Remark \ref{BASISNR} says even more: no matter the value of $v[L]$, there is always a unique decomposition like
(\ref{INITIALCONDITION}). The equality $v[L] = C_0(0)\varepsilon_r$ implies that  $v[L]\neq  0$ is only responsible of $C_0(0)\neq0$.

\end{rem}

The above theorem has the following consequence for quasi-definite functionals.

\begin{cor} \label{DIRECTPROBLEMCOR}

Let $u$, $v$ be quasi-definite functionals with sequences of MOP $(\varphi_n)$, $(\psi_n)$ respectively, and let $L=P+P_*=z^{-r}A$ with $P$ a
polynomial of degree $r$. Then, $u \equiv vL$ iff there exist polynomials $C_n\in\PP_r$, $D_n\in\PP_{r-1}$ with $C_n(0)\not= 0$ such that
\begin{equation} \label{QDPOL}
A \varphi_n = C_n \psi_{n+r} + D_n \psi_{n+r}^*, \qquad n\geq 0,
\end{equation}
or equivalently
$$
A \varphi_n^* =  z D_n^* \psi_{n+r} + C_n^* \psi_{n+r}^*, \qquad n\geq 0.
$$

\end{cor}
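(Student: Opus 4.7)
The plan is to derive the corollary as a direct pointwise application of Theorem \ref{GENERALPROBLEMTH}. Since $u$ and $v$ are both quasi-definite, the finite MOP segments $(\varphi_j)_{j=0}^n$ and $(\psi_j)_{j=0}^{n+r}$ exist for every $n\geq 0$, so the hypotheses of Theorem \ref{GENERALPROBLEMTH} are satisfied for all $n$ simultaneously. This eliminates all the orthogonality/existence subtleties: we can freely apply the theorem at every index $n$.

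For the forward direction, I would start from $u\equiv vL$ and observe that this implies $u \equiv vL$ in $\PP_n$ for every $n\geq 0$. Applying (i)$\Rightarrow$(ii) of Theorem \ref{GENERALPROBLEMTH} at each $n$ produces unique polynomials $C_n\in\PP_r$, $D_n\in\PP_{r-1}$ with $C_n(0)\neq 0$ satisfying (\ref{QDPOL}). For the converse, I would assume (\ref{QDPOL}) holds for all $n\geq 0$ and apply (ii)$\Rightarrow$(i) of Theorem \ref{GENERALPROBLEMTH} at each $n$, obtaining a constant $\lambda_n\in\R^*$ such that $u[f] = \lambda_n vL[f]$ for all $f\in\Lambda_{-n,n}$. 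To upgrade these local equivalences to a single global one, I would note that the nested inclusion $\Lambda_{-n,n}\subset\Lambda_{-(n+1),n+1}$ forces $\lambda_n vL[f] = \lambda_{n+1} vL[f]$ for every $f\in\Lambda_{-n,n}$; evaluating at $f=1$ and using $u[1]=(\varphi_0,\varphi_0)_u\neq 0$ (hence $vL[1]\neq 0$) shows $\lambda_n = \lambda_{n+1}$, so a single $\lambda$ works for all $n$, giving $u\equiv vL$.

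The equivalent formulation in terms of $\varphi_n^*$ and $\psi_{n+r}^*$ requires no independent argument: it is exactly the passage (ii)$\Leftrightarrow$(iii) of Theorem \ref{GENERALPROBLEMTH} applied at each $n$, which was already established via the $*_{2r+j}$ operator and the self-reciprocity $A^*=A$. The uniqueness of the pair $(C_n,D_n)$, as well as the facts $\deg C_n = r$, $C_n(0)\in\R$, and $C_n^*(0)=A(0)$, all transfer automatically from the uniqueness clause of Theorem \ref{GENERALPROBLEMTH}.

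There is no substantial obstacle: the only mild point is confirming that the scalars $\lambda_n$ coming from the equivalences on $\PP_n$ collapse to a single constant, and this is handled by the compatibility argument above using $u[1]\neq 0$. Everything else is an indexed repetition of the finite-segment result already proved.
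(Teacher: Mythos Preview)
Your proposal is correct and matches the paper's approach: the paper states this corollary immediately after Theorem~\ref{GENERALPROBLEMTH} with no separate proof, treating it as an obvious consequence for quasi-definite functionals. You have simply made explicit the one mild detail the paper leaves implicit, namely that the constants $\lambda_n$ from the equivalences on $\PP_n$ all coincide, and your argument via $u[1]\neq 0$ handles this cleanly.
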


For convenience, in what follows we will use a matrix notation and we will adopt some definitions and conventions that will be used in the rest of
the paper. If $L$ is a hermitian Laurent polynomial of degree $r$, $P$ and $A$ are the polynomials given by $L=P+P_*=z^{-r}A$, $P(0)\in\R$. We denote
by $\phi_j$ and $\psi_j$ the $j$-th MOP with respect to the hermitian functionals $u$ and $v$ respectively. Also,
$$
\begin{array}{c}
a_j = \varphi_j(0), \qquad b_j = \psi_j(0),
\qquad
e_j = (\varphi_j,\varphi_j)_u, \qquad \varepsilon_j = (\psi_j,\psi_j)_v,
\medskip \\
\Phi_j = \pmatrix{\varphi_j \cr \varphi_j^*},
\qquad
\mathcal{S}_j = \pmatrix{z & a_j \cr z\overline{a}_j & 1},
\qquad
\mathcal{A}_j = \pmatrix{1 & a_j \cr \overline{a}_j & 1},
\medskip \\
\Psi_j = \pmatrix{\psi_j \cr \psi_j^*},
\qquad
\mathcal{T}_j = \pmatrix{z & b_j \cr z\overline{b}_j & 1},
\qquad
\mathcal{B}_j = \pmatrix{1 & b_j \cr \overline{b}_j & 1},
\medskip \\
\mathcal{C}_j = \pmatrix{C_j & D_j \cr zD_j^* & C_j^*},
\qquad
\tilde{\mathcal{C}}_j = \pmatrix{C_j & zD_j \cr D_j^* & C_j^*}.
\end{array}
$$
The matrices $\mathcal{S}_j$ and $\mathcal{T}_j$, known as {\it transfer matrices}, permit us to write recurrence relations (\ref{RR}) and
(\ref{RRR}) for $(\varphi_n)$ and $(\psi_n)$ in the compact form
\begin{equation} \label{TM}
\Phi_j = \mathcal{S}_j \Phi_{j-1},
\qquad
\Psi_j = \mathcal{T}_j \Psi_{j-1},
\end{equation}
while the matrices $\mathcal{C}_j$ make possible to combine (\ref{GPOLO}) and (\ref{GPOLO1}) into
$$
A\Phi_j = \mathcal{C}_j\Psi_{j+r}.
$$
The structure of the matrices $\mathcal{C}_j$ is worth to be remarked.

\begin{defn} \label{J-M}

A polynomial matrix $\mathcal{C} = \scriptsize \pmatrix{C_1 & D_1 \cr D_2 & C_2}$, $C_i,D_i\in\PP_ r$, satisfying ${\mathcal{C}}^{*_r} = J
\mathcal{C} J$ with $J=\scriptsize\pmatrix{0&1\cr1&0}$ will be called a $J$-{\it self-reciprocal matrix} in $\PP_r$. This is equivalent to state that
$C_2=C_1^{*_r}$ and $D_2=D_1^{*_r}$.

We denote by $\J_r$ the set of $J$-selfreciprocal matrices in $\PP_r$ such that $C_2(0)\neq0$ and $D_2(0)=0$. These conditions mean that $\deg C_1 =
r$ and $\deg D_1 \leq r-1$, thus the general form of a polynomial matrix $\mathcal{C}\in\J_r$ is
\begin{equation} \label{Cm}
\mathcal{C}=\pmatrix{C & D \cr zD^* & C^*}, \quad \deg C = r, \quad \deg D \leq r-1,
\end{equation}
where here and below we assume that $D^* = D^{*_{r-1}}$.

Given a polynomial matrix $\mathcal{C}\in\J_r$ like (\ref{Cm}) we will denote
$$
\tilde\mathcal{C} = \pmatrix{C & zD \cr D^* & C^*},
$$
which is $J$-self-reciprocal too, but in general does not necessarily belong to $\J_r$ because $zD$ can have degree $r$.

The determinant of a $J$-self-reciprocal matrix $\mathcal{C}$ in $\PP_r$ is a self-reciprocal polynomial in $\PP_{2r}$. When $\det\mathcal{C}$ has
degree $2r$ we will say that $\mathcal{C}$ is a {\it regular $J$-self-reciprocal matrix}. This is equivalent to $\det\mathcal{C}(0) \neq 0$, which in
case of $\mathcal{C}\in\J_r$ means simply $C(0)\neq 0$. We will denote by $\J_r^\reg$ the subset of regular $J$-self-reciprocal matrices of $\J_r$.

\end{defn}

The next result about $J$-self-reciprocal matrices will be useful later on.

\begin{lem} \label{CT=SC}

Let $\mathcal{S} = \pmatrix{z & a \cr z\overline{a} & 1}$, $\mathcal{T} = \pmatrix{z & b \cr z\overline{b} & 1}$ with $a,b\in\C$.
\begin{itemize}
\item[(i)] If $|a|\neq1$, $\mathcal{C}\in\J_r$, the equation $\mathcal{C}\mathcal{T}=\mathcal{S}\hat\mathcal{C}$ defines a matrix
$\hat\mathcal{C}\in\J_r$ iff
$$
a\,C^*(0)=b\,C(0)+D(0).
$$
In this case $\hat\mathcal{C}\in\J_r^\reg\;\Leftrightarrow\;|b|\neq1,\;\mathcal{C}\in\J_r^\reg$.
\item[(ii)] If $|b|\neq1$, $\mathcal{C}\in\J_r$, the equation $\hat\mathcal{C}\mathcal{T}=\mathcal{S}\mathcal{C}$ defines a matrix
$\hat\mathcal{C}\in\J_r$ iff
$$
a\,\overline{C(0)}=b\,\overline{C^*(0)}-\overline{D^*(0)}.
$$
In this case $\hat\mathcal{C}\in\J_r^\reg\;\Leftrightarrow\;|a|\neq1,\;\mathcal{C}\in\J_r^\reg$.
\end{itemize}

\end{lem}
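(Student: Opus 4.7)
The plan is to isolate $\hat{\mathcal{C}}$ as a rational matrix from each equation and then determine when it is polynomial and when it lies in $\J_r$. Two preliminary observations will be used throughout. First, both $\mathcal{S}$ and $\mathcal{T}$ are $J$-self-reciprocal in $\PP_1$ (a direct check gives $\mathcal{S}^{*_1} = J\mathcal{S}J$), with $\det\mathcal{S} = z(1-|a|^2)$ and $\det\mathcal{T} = z(1-|b|^2)$. Second, the $*$-operator is multiplicative on polynomial matrices, $(M_1 M_2)^{*_{m+n}} = M_1^{*_m} M_2^{*_n}$, so the product of $J$-self-reciprocal matrices in $\PP_m$ and $\PP_n$ is $J$-self-reciprocal in $\PP_{m+n}$.

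For part (i), with $|a|\neq 1$ the matrix $\mathcal{S}$ is invertible as a rational matrix, so the equation has a unique rational solution $\hat{\mathcal{C}} := \mathcal{S}^{-1}\mathcal{C}\mathcal{T}$. Writing $\mathcal{S}^{-1} = \frac{1}{z(1-|a|^2)}\pmatrix{1 & -a \cr -z\overline{a} & z}$, the second row of $\mathcal{S}^{-1}$ is already polynomial, so polynomiality of $\hat{\mathcal{C}}$ reduces to the vanishing of $(1,-a)\mathcal{C}\mathcal{T}$ at $z=0$; a short computation gives that row evaluated at zero as $(0,\; bC(0)+D(0)-aC^*(0))$, so the stated condition is exactly what is required. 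Assuming it, I would verify the remaining $\J_r$ requirements in turn: the entries of $\hat{\mathcal{C}}$ have degree at most $r$ by degree counting in $\mathcal{S}\hat{\mathcal{C}} = \mathcal{C}\mathcal{T}$; $\hat{\mathcal{C}}$ is $J$-self-reciprocal in $\PP_r$ by applying $*_{r+1}$ to that equation and cancelling the (rationally) invertible $\mathcal{S}$; the $(2,1)$ entry of $\hat{\mathcal{C}}$ vanishes at $z=0$ because the corresponding entry of $\mathcal{C}\mathcal{T}$ is manifestly divisible by $z$, forcing the $\J_r$ block form; and comparing $(2,2)$ entries at $z=0$ gives $\hat{C}^*(0) = C^*(0) \neq 0$. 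Regularity then follows by taking determinants: $(1-|a|^2)\det\hat{\mathcal{C}} = (1-|b|^2)\det\mathcal{C}$, and evaluating at zero (using $\hat{C}^*(0)=C^*(0)$) yields $\hat{C}(0) = \frac{1-|b|^2}{1-|a|^2}C(0)$, from which $\hat{\mathcal{C}}\in\J_r^\reg \Leftrightarrow |b|\neq 1$ and $\mathcal{C}\in\J_r^\reg$ is immediate.

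Part (ii) is handled by the symmetric substitution $\hat{\mathcal{C}} := \mathcal{S}\mathcal{C}\mathcal{T}^{-1}$, but with one twist: a check at $z=0$ using $\mathcal{S}(0)\mathcal{C}(0)\,\mathrm{adj}\,\mathcal{T}(0) = 0$ shows that $\hat{\mathcal{C}}$ is \emph{automatically} polynomial. Here the stated condition is needed not for polynomiality but for $\hat{\mathcal{C}}$ to have the $\J_r$ block shape: imposing that the $(2,1)$ entry of $\hat{\mathcal{C}}$ vanish at $z=0$ gives precisely $\overline{a}C(0) = \overline{b}C^*(0) - D^*(0)$, which is the complex conjugate of the displayed condition. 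The remaining items---$J$-self-reciprocity, $\hat{C}^*(0) = C^*(0) \neq 0$, and the regularity equivalence---go through in the same way, with the roles of $|a|$ and $|b|$ swapped.

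The only substantive calculation is the identification of these two scalar conditions at the origin: in (i), the second component of $(1,-a)\mathcal{C}\mathcal{T}$ at $z=0$, and in (ii), the $(2,1)$ entry of $\hat{\mathcal{C}}$ at $z=0$. Once these are in hand, the $J$-self-reciprocity of $\hat{\mathcal{C}}$, the block-form conditions defining $\J_r$, and the regularity equivalence are all forced by the multiplicativity of $*_n$ and the determinant identity, without any further computation.
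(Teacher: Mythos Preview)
Your proof is correct and follows essentially the same strategy as the paper: isolate $\hat{\mathcal{C}}$ by inverting the appropriate transfer matrix, then identify the scalar condition at $z=0$ that forces $\hat{\mathcal{C}}\in\J_r$, and read off the regularity equivalence from determinants. The only difference is organizational: the paper factors $\mathcal{S}=\mathcal{A}\,\mathrm{diag}(z,1)$ and $\mathcal{T}=\mathcal{B}\,\mathrm{diag}(z,1)$ with $\mathcal{A}=\bigl(\begin{smallmatrix}1&a\\\overline a&1\end{smallmatrix}\bigr)$, $\mathcal{B}=\bigl(\begin{smallmatrix}1&b\\\overline b&1\end{smallmatrix}\bigr)$, obtaining in (i) the clean form $\hat{\mathcal{C}}=\mathrm{diag}(z^{-1},1)\,\mathcal{X}\,\mathrm{diag}(z,1)$ with $\mathcal{X}=(1-|a|^2)^{-1}\mathcal{A}^{-1}\mathcal{C}\mathcal{B}$ already $J$-self-reciprocal in $\PP_r$, and in (ii) the identity $\hat{\mathcal{C}}=(1-|b|^2)^{-1}\mathcal{A}\,\tilde{\mathcal{C}}\,\mathcal{B}^{-1}$, which makes both the automatic polynomiality and the $J$-self-reciprocity immediate without further checking; you instead verify these properties directly from the equation $\mathcal{S}\hat{\mathcal{C}}=\mathcal{C}\mathcal{T}$ (respectively $\hat{\mathcal{C}}\mathcal{T}=\mathcal{S}\mathcal{C}$) via degree counting, the multiplicativity of $*_n$, and evaluation at the origin. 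Both routes land on the same scalar conditions and the same formula $\hat C(0)=\frac{1-|b|^2}{1-|a|^2}C(0)$ (resp.\ with $a,b$ swapped), so the content is identical.
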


\begin{proof}
If $|a|\neq1$ the equation $\mathcal{C}\mathcal{T}=\mathcal{S}\hat\mathcal{C}$ can be written as
$$
\hat\mathcal{C} =  \pmatrix{z^{-1}&0 \cr 0&1} \mathcal{X} \pmatrix{z&0 \cr 0&1}, \quad \mathcal{X} = \frac{1}{1-|a|^2} \pmatrix{1&-a \cr
-\overline{a}&1} \mathcal{C} \pmatrix{1&b \cr \overline{b}&1}.
$$
Let $\mathcal{C}\in\J_r$. Then $\mathcal{X}$ is a $J$-self-reciprocal matrix in $\PP_r$, i.e., $\mathcal{X} = \scriptsize \pmatrix{X&Y \cr
Y^{*_r}&X^{*_r}}$ with $X,Y\in\PP_r$. Therefore, $\hat\mathcal{C}$ is a polynomial matrix iff $Y(0)=0$, which yields the relation between $a$ and $b$
given in (i). In such a case $Y=z\hat Y$, $\hat Y\in\PP_{r-1}$, and $X^{*_r}(0)=C^*(0)\neq0$, thus $\hat\mathcal{C} = {\scriptsize\pmatrix{X&\hat Y
\cr z\hat Y^{*_{r-1}} & X^{*_r}}} \in \J_r$. Also, $X(0)=C(0)(1-|b|^2)/(1-|a|^2)$, hence $\hat\mathcal{C}\in\J_r^\reg \Leftrightarrow
|b|\neq1,\,\mathcal{C}\in\J_r^\reg$.

On the other hand, if $|b|\neq1$ the equation $\hat\mathcal{C}\mathcal{T}=\mathcal{S}\mathcal{C}$ reads as
$$
\hat\mathcal{C} = \frac{1}{1-|b|^2} \pmatrix{1&a \cr \overline{a}&1} \tilde\mathcal{C} \pmatrix{1&-b \cr -\overline{b}&1}.
$$
Suppose that $\mathcal{C}\in\J_r$. Then $\hat\mathcal{C}$ is a $J$-self-reciprocal matrix in $\PP_r$, hence $\hat\mathcal{C} = \scriptsize
\pmatrix{X&Y \cr Y^{*_r}&X^{*_r}}$ with $X,Y\in\PP_r$. The relation between $a$ and $b$ given in (ii) is equivalent to $Y^{*_r}(0)=0$, and also gives
$X^{*_r}(0)=C^*(0)\neq0$, $X(0)=C(0)(1-|a|^2)/(1-|b|^2)\neq0$, so $\hat\mathcal{C}\in\J_r$ and $\hat\mathcal{C}\in\J_r^\reg \Leftrightarrow
|a|\neq1,\,\mathcal{C}\in\J_r^\reg$.
\end{proof}

\medskip

The goal of the rest of the section is to present a more economical and effective approach than the ones already existing in the literature (see for
instance \cite{God,Go91,GodMarc}) to study the relation $u \equiv vL$ for any degree of $L$. This new point of view avoids the calculation of
determinants and MOP related to $u$ and $v$, requiring only the knowledge of the corresponding Schur parameters and the Laurent polynomial $L$. More
precisely, we will characterize the relation $u \equiv vL$ through a matrix difference equation for the Schur parameters involving
$J$-self-reciprocal matrices.

 The first step to formulate this new approach is to translate the relations between the MOP
 $(\varphi_n)$ and $(\psi_n)$ into relations between the corresponding Schur parameters. The
 following result will be useful for this purpose.

\begin{lem} \label{MATRICES}

Let $P$, $Q$ be relatively prime polynomials with $\deg Q \leq \deg P$. If the polynomial matrices
$$
M = \pmatrix{M_1 & M_2 \cr M_3 & M_4}, \qquad N = \pmatrix{N_1 & N_2 \cr N_3 & N_4},
$$
satisfy $\deg(M_2-N_2), \deg (M_4-N_4) < \deg P$, then
$$
M \pmatrix{P \cr Q} = N \pmatrix{ P \cr Q} \quad\Leftrightarrow\quad M=N.
$$

\end{lem}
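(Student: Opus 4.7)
The forward implication is trivial, so the plan is to focus on the converse. I would set $K := M - N$ and reformulate the hypothesis as the homogeneous system $K \binom{P}{Q} = 0$, together with the degree constraints $\deg K_2 < \deg P$ and $\deg K_4 < \deg P$ (the bounds on $K_1, K_3$ play no role and are not assumed). The goal is then to show $K = 0$.

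Each row of the vanishing identity reads $K_{2i-1} P + K_{2i} Q = 0$ for $i = 1, 2$. Rewriting as $K_{2i} Q = -K_{2i-1} P$, I would use that $P$ divides the left-hand side and invoke the coprimality $\gcd(P, Q) = 1$ to conclude that $P$ divides $K_{2i}$. Combined with the degree bound $\deg K_{2i} < \deg P$, this forces $K_{2i} = 0$. Substituting back, $K_{2i-1} P = 0$ gives $K_{2i-1} = 0$ (since $P$ cannot be the zero polynomial: $\gcd(P,Q)=1$ together with $\deg Q \le \deg P$ rules out $P = 0$, as otherwise $\gcd(0,Q) = Q$ could not be a unit unless $Q$ is a nonzero constant, but then $\deg Q > \deg P = -\infty$, contradiction; in any event the nontriviality of $P$ is implicit in the hypothesis).

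There is no real obstacle here — the entire argument is a textbook application of unique factorization in $\mathbb{C}[z]$ via coprimality. The only point worth a brief remark in the write-up is that the hypothesis $\deg Q \le \deg P$ is used only to ensure that $P$ is not the zero polynomial (and to make the statement nontrivially usable); it is the coprimality and the strict degree bounds on $K_2$ and $K_4$ that do all the work.
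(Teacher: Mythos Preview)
Your proposal is correct and follows essentially the same argument as the paper: set $K=M-N$, use $\gcd(P,Q)=1$ to deduce that $P$ divides $K_2$ and $K_4$, apply the degree bounds to force $K_2=K_4=0$, and then conclude $K_1=K_3=0$. The paper's proof is identical apart from notation (it works directly with $M_i-N_i$ rather than naming $K$) and omits your side remark on $P\neq0$.
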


\begin{proof}
$M_1P+M_2Q = N_1P+N_2Q$, thus $(M_1-N_1) P = (N_2-M_2)Q$. Since $\gcd (P,Q)=1$, necessarily $P$ divides $M_2-N_2$, which implies $M_2-N_2=0$ because
$\deg (M_2-N_2) < \deg P$. Therefore $M_1-N_1 = 0$ too. Analogously $M_3-N_3 = M_4-N_4 = 0$.
\end{proof}

The next result is the matrix form of Theorem \ref{GENERALPROBLEMTH}, together with a stronger result and some properties of the polynomial matrices
$\mathcal{C}_j$, including the first relations between the Schur parameters  $(a_n)$ and $(b_n)$.

\begin{thm} \label{DIRECTPROBLEMTHMF}

Let $u$, $v$ be quasi-definite in $\PP_n$, $\PP_{n+r}$ respectively and let $L$ be a hermitian Laurent polynomial of degree $r$. Then, the following
statements are equivalent:
\begin{itemize}
\item[(i)] $u \equiv vL$ in $\PP_n$.
\item[(ii)] There exist $\mathcal{C}_0,\dots,\mathcal{C}_n\in\J_r^\reg$ such that
\begin{equation} \label{GPRMF}
A \Phi_j = \mathcal{C}_j  \Psi_{j+r}, \qquad j=0,\dots,n.
\end{equation}
\item[(iii)] There exists $\mathcal{C}_n\in\J_r^\reg$ such that
\begin{equation} \label{GPRMF2}
A \Phi_n = \mathcal{C}_n  \Psi_{n+r}.
\end{equation}
\end{itemize}
The matrices $\mathcal{C}_j$ are the only solutions of $(\ref{GPRMF})$ in $\J_r$, so $\mathcal{C}_0$ is determined by
\begin{equation} \label{MATINITIALCONDITION}
\mathcal{C}_0 \Psi_r = A \pmatrix{1 \cr 1}, \qquad \mathcal{C}_0\in\J_r.
\end{equation}
Besides, we have the relations
\begin{equation} \label{MAT}
\mathcal{C}_j \mathcal{T}_{j+r} = \mathcal{S}_j \mathcal{C}_{j-1}, \qquad j=1,\dots,n,
\end{equation}
\begin{equation} \label{MATILDA}
\mathcal{C}_j \mathcal{B}_{j+r} = \mathcal{A}_j \tilde{\mathcal{C}}_{j-1}, \qquad j=1,\dots,n,
\end{equation}
\begin{equation} \label{DETAC}
\det\mathcal{C}_j = C_j(0) A, \qquad j=0,\dots,n.
\end{equation}

\end{thm}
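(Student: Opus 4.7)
The plan is to bootstrap from Theorem \ref{GENERALPROBLEMTH} together with Lemma \ref{CT=SC} and Corollary \ref{BASISC}. The key observation is that the matrix equation $A\Phi_j=\mathcal{C}_j\Psi_{j+r}$ is just a compact repackaging of the two scalar decompositions (\ref{GPOLO}) and (\ref{GPOLO1}), and the condition $\mathcal{C}_j\in\J_r^\reg$ encodes exactly the requirements $\deg C_j=r$, $C_j^*(0)=A(0)$ and $C_j(0)\neq 0$ already established in Theorem \ref{GENERALPROBLEMTH}. Hence the equivalence (i)$\Leftrightarrow$(ii), the uniqueness of $\mathcal{C}_j$ in $\J_r$, and the initial condition (\ref{MATINITIALCONDITION}) (which is simply (\ref{GPRMF}) at $j=0$ applied to $\Phi_0=(1,1)^T$) transfer verbatim. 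The implication (ii)$\Rightarrow$(iii) is trivial.

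The substantive new content is (iii)$\Rightarrow$(ii): a single matrix identity at the top index $n$ must propagate down to $j=0$. I plan to prove this by a backward recurrence driven by Lemma \ref{CT=SC}(i). From (\ref{GPRMF2}), evaluating each component at $z=0$ and using $\varphi_n^*(0)=\psi_{n+r}^*(0)=1$ gives $C_n^*(0)=A(0)$ and $A(0)\,a_n=b_{n+r}C_n(0)+D_n(0)$; together they read $a_n\,C_n^*(0)=b_{n+r}C_n(0)+D_n(0)$, which is precisely the compatibility hypothesis of Lemma \ref{CT=SC}(i). Since $|a_n|\neq 1$, $|b_{n+r}|\neq 1$ and $\mathcal{C}_n\in\J_r^\reg$, the lemma produces $\mathcal{C}_{n-1}\in\J_r^\reg$ satisfying (\ref{MAT}) at $j=n$. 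Right-multiplying $\mathcal{C}_n\mathcal{T}_{n+r}=\mathcal{S}_n\mathcal{C}_{n-1}$ by $\Psi_{n+r-1}$ and using $\mathcal{T}_{n+r}\Psi_{n+r-1}=\Psi_{n+r}$ together with $\Phi_n=\mathcal{S}_n\Phi_{n-1}$ yields $\mathcal{C}_{n-1}\Psi_{n+r-1}=A\Phi_{n-1}$. Iterating generates all the $\mathcal{C}_j$ and produces the recurrence (\ref{MAT}) as a byproduct. Relation (\ref{MATILDA}) then follows by writing $\mathcal{T}_k=\mathcal{B}_k\,\mathrm{diag}(z,1)$, $\mathcal{S}_k=\mathcal{A}_k\,\mathrm{diag}(z,1)$, and checking that conjugation of $\mathcal{C}_{j-1}$ by $\mathrm{diag}(z,1)$ produces $\tilde{\mathcal{C}}_{j-1}$.

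For the determinant identity (\ref{DETAC}), I would eliminate $\psi_{j+r}^*$ between the two rows of $A\Phi_j=\mathcal{C}_j\Psi_{j+r}$ (multiplying the first by $C_j^*$, the second by $D_j$, and subtracting) to obtain $\det\mathcal{C}_j\cdot\psi_{j+r}=A(C_j^*\varphi_j-D_j\varphi_j^*)$, and symmetrically $\det\mathcal{C}_j\cdot\psi_{j+r}^*=A(C_j\varphi_j^*-zD_j^*\varphi_j)$. Cross-multiplying and cancelling $A\det\mathcal{C}_j$ gives
\[
(C_j^*\varphi_j-D_j\varphi_j^*)\,\psi_{j+r}^*=(C_j\varphi_j^*-zD_j^*\varphi_j)\,\psi_{j+r};
\]
the coprimality of $\psi_{j+r}$ and $\psi_{j+r}^*$ then forces $\psi_{j+r}\mid(C_j^*\varphi_j-D_j\varphi_j^*)$, and a degree/leading-coefficient count pins down the quotient as the constant $C_j(0)$, whence $\det\mathcal{C}_j=C_j(0)\,A$. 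The delicate point I anticipate is the degree bookkeeping in the recurrence at the small index $j=1$, where a direct application of Lemma \ref{MATRICES} is borderline (the degree of $\Psi_{j+r-1}$ equals the worst-case entry degree of the matrices involved); routing the argument through Lemma \ref{CT=SC}(i), together with Remark \ref{BASISNR} for $j=0$, sidesteps this issue uniformly.
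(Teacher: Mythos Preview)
Your proof is correct and follows essentially the same architecture as the paper: the equivalence (i)$\Leftrightarrow$(ii) and uniqueness come from Theorem~\ref{GENERALPROBLEMTH}, and the backward propagation (iii)$\Rightarrow$(ii) via Lemma~\ref{CT=SC}(i) is exactly the paper's argument. The minor differences are in the auxiliary identities: the paper derives (\ref{MAT}) and (\ref{MATILDA}) independently from (ii) by applying Lemma~\ref{MATRICES} to the vector $(z\psi_{j+r-1},\psi_{j+r-1}^*)^T$, whereas you obtain (\ref{MAT}) as a byproduct of the backward iteration and then get (\ref{MATILDA}) by the clean factorization $\mathcal{T}_k=\mathcal{B}_k\,\mathrm{diag}(z,1)$; for (\ref{DETAC}) the paper proves the case $j=0$ using coprimality of $\psi_r,\psi_r^*$ and then propagates via determinants of (\ref{MAT}), while you argue directly at each $j$ using coprimality of $\psi_{j+r},\psi_{j+r}^*$ (your leading-coefficient identification uses $C_j(0)\in\R$, which is available from Theorem~\ref{GENERALPROBLEMTH}). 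Your concern about Lemma~\ref{MATRICES} being ``borderline'' at $j=1$ is unfounded---the degree condition there reads $r<j+r$, which holds for all $j\geq 1$---but your alternative route is in any case valid and slightly more streamlined.
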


\begin{proof}
Bearing in mind Theorem \ref{GENERALPROBLEMTH}, it is enough to prove (iii) $\Rightarrow$ (ii) $\Rightarrow$ (\ref{MAT}), (\ref{MATILDA}),
(\ref{DETAC}). Suppose that only (iii) holds. Evaluating (\ref{GPRMF2}) at $z=0$ we find $a_n A(0) = b_{n+r} C_n(0) + D_n(0)$ and $C_n^*(0)=A(0)$.
Hence, Lemma \ref{CT=SC} (i) assures the existence of $\mathcal{C}_{n-1}\in\J_r^\reg$ satisfying $\mathcal{C}_n \mathcal{T}_{n+r} = \mathcal{S}_n
\mathcal{C}_{n-1}$. Then, the equality $A\mathcal{S}_n\Phi_{n-1} = A\Phi_n = \mathcal{C}_n\Psi_{n+r} = \mathcal{C}_n\mathcal{T}_{n+r}\Psi_{n+r-1} =
\mathcal{S}_n\mathcal{C}_{n-1}\Psi_{n+r-1}$ shows that $A\Phi_{n-1} = \mathcal{C}_{n-1}\Psi_{n+r-1}$. Iterating this procedure we obtain (ii).

Combining (\ref{GPRMF}) and recurrence relations (\ref{TM}),
$$
A \Phi_j = \mathcal{C}_j \Psi_{j+r} = \mathcal{C}_j \mathcal{T}_{j+r} \Psi_{j+r-1},
\qquad
A \Phi_j = A \mathcal{S}_j \Phi_{j-1} = \mathcal{S}_j  \mathcal{C}_{j-1} \Psi_{j+r-1}.
$$
Therefore, $\mathcal{C}_j \mathcal{T}_{j+r} \Psi_{j+r-1} = \mathcal{S}_j \mathcal{C}_{j-1} \Psi_{j+r-1}$, or equivalently
$$
\mathcal{C}_j \mathcal{B}_{j+r} \pmatrix{z\psi_{j+r-1} \cr \psi_{j+r-1}^*} =
\mathcal{A}_j \tilde{\mathcal{C}}_{j-1} \pmatrix{z\psi_{j+r-1} \cr \psi_{j+r-1}^*}.
$$
Taking into account that $z\psi_j$, $\psi_j^*$ are relatively prime and $\deg C_j=r$,  $\deg D_j \leq r-1$, relations (\ref{MAT}) and (\ref{MATILDA})
follow from Lemma \ref{MATRICES}.

To prove (\ref{DETAC}) notice that $A= C_0 \psi_r + D_0 \psi_r^* = C_0^* \psi_r^* + zD_0^* \psi_r$, hence we have the equality
$(C_0-zD_0^*)\psi_r=(C_0^*-D_0)\psi_r^*$. Since $\psi_r$, $\psi_r^*$  are relatively prime this implies $C_0(0) \psi_r = C_0^* - D_0$ and $C_0(0)
\psi_r^* = C_0 - z D_0^*$. So,
$$
C_0(0) A = C_0(0) (C_0 \psi_r + D_0 \psi_r^*) = C_0C_0^* - z D_0D_0^* = \det\mathcal{C}_0.
$$
Besides, from (\ref{MAT}) we find that $\det\mathcal{C}_j \propto \det\mathcal{C}_0$ for $j=1,\dots,n$. Evaluating at $z=0$ we finally obtain
$\det\mathcal{C}_j = (C_j(0)/C_0(0)) \det\mathcal{C}_0 = C_j(0) A$.
\end{proof}

The equivalence (i) $\Leftrightarrow$ (iii) of the previous theorem means that the last condition ($j=n$) in (\ref{GPOLO}) or (\ref{GPOLO1}) suffices
for the equivalence in Theorem \ref{GENERALPROBLEMTH}.

There exist also inverse relations between the finite segments of MOP $\bigl(\varphi_j\bigr)_{j=0}^n$ and $\bigl(\psi_j\bigr)_{j=0}^{n+r}$. The
polynomial matrix coefficients of these inverse relations are not independent of the polynomial matrix coefficients $\mathcal{C}_j$ of the direct
relations. Indeed, both polynomial matrix coefficients are essentially adjoints of each other, understanding the adjoint of a $2\times2$ matrix
$M=\scriptsize\pmatrix{M_1&M_2 \cr M_3&M_4}$ as the matrix $\adj(M)=\scriptsize\pmatrix{M_4&-M_2 \cr -M_3&M_1}$. Thus, given a $2\times2$ polynomial
matrix $M$ in $\PP_r$, $\adj(M)$ is a $2\times2$ polynomial matrix in $\PP_r$ satisfying
$$
\adj(M) \, M = (\det M) \, I,
$$
where $I$ is the identity matrix of the same size as $M$.

\begin{thm} \label{EQGENERALPROBLEMTHMF}

If $u$, $v$ are quasi-definite in $\PP_n$, $\PP_{n+r}$ respectively, the following statements are equivalent:
\begin{itemize}
\item[(i)] $u \equiv vL$ in $\PP_n$  for some hermitian Laurent polynomial $L$ of degree $r$.
\item[(ii)] There exist $\mathcal{X}_r,\dots,\mathcal{X}_n\in\J_r^\reg$ such that
\begin{equation} \label{BOLI}
\Psi_{j+r} = \mathcal{X}_j \Phi_j, \qquad j=r,\dots,n.
\end{equation}
\item[(iii)] There exists $\mathcal{X}_n\in\J_r^\reg$ such that
\begin{equation} \label{BOLI2}
\Psi_{n+r} = \mathcal{X}_n \Phi_n.
\end{equation}
\end{itemize}
The matrices $\mathcal{X}_j$ are the only solutions of $(\ref{BOLI})$ in $\J_r$, so $\mathcal{X}_r$ is determined by
\begin{equation} \label{IC-INV}
\mathcal{X}_r \Phi_r = \Psi_{2r}, \qquad \mathcal{X}_r\in\J_r.
\end{equation}
Besides, we have the relations
\begin{equation} \label{SHP}
\mathcal{T}_{j+r} \mathcal{X}_{j-1} = \mathcal{X}_j \mathcal{S}_j, \qquad j = r+1,\dots,n.
\end{equation}
\begin{equation} \label{SHP2}
\mathcal{B}_{j+r} \tilde\mathcal{X}_{j-1} = \mathcal{X}_j \mathcal{A}_j, \qquad j = r+1,\dots,n.
\end{equation}
\begin{equation} \label{detX}
\det\mathcal{X}_j \propto A, \qquad j=r,\dots,n.
\end{equation}
\begin{equation} \label{MIP}
\mathcal{C}_j \mathcal{X}_j = A \pmatrix{1&0 \cr 0&1}, \qquad  j = r,\dots,n.
\end{equation}
\begin{equation} \label{X-C}
\mathcal{X}_j = {1\over C_j(0)} \, \adj (\mathcal{C}_j), \qquad j=r,\cdots,n.
\end{equation}

\end{thm}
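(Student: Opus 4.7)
The strategy is to bootstrap Theorem \ref{DIRECTPROBLEMTHMF} via the adjugate operation. For (i) $\Rightarrow$ (ii), I would apply that theorem to obtain matrices $\mathcal{C}_0,\dots,\mathcal{C}_n\in\J_r^\reg$ satisfying $A\Phi_j=\mathcal{C}_j\Psi_{j+r}$, and then set $\mathcal{X}_j:=C_j(0)^{-1}\adj(\mathcal{C}_j)$, which is exactly (\ref{X-C}). A direct inspection of the shape of $\mathcal{C}_j$ given by (\ref{Cm}) shows that $\adj(\mathcal{C}_j)\in\J_r$, and $\det\mathcal{X}_j=A/C_j(0)$ has degree $2r$ because $\deg P=r$ forces $A(0)\neq0$; hence $\mathcal{X}_j\in\J_r^\reg$ and (\ref{detX}) holds. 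The identity $\mathcal{C}_j\mathcal{X}_j=AI$, which is (\ref{MIP}), is then automatic, and left-multiplying $A\Phi_j=\mathcal{C}_j\Psi_{j+r}$ by $\mathcal{X}_j$ yields $A\Psi_{j+r}=A\mathcal{X}_j\Phi_j$, whence (\ref{BOLI}) follows after cancelling the nonzero polynomial $A$.

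The implication (ii) $\Rightarrow$ (iii) is immediate by taking $j=n$. For (iii) $\Rightarrow$ (i) I would reverse the adjugate construction: starting from $\mathcal{X}_n\in\J_r^\reg$, define $A:=\det\mathcal{X}_n$, a self-reciprocal polynomial of degree $2r$ with $A(0)\neq0$, and $\mathcal{C}_n:=\adj(\mathcal{X}_n)$. A short check shows $\mathcal{C}_n\in\J_r^\reg$, and left-multiplying $\Psi_{n+r}=\mathcal{X}_n\Phi_n$ by $\mathcal{C}_n$ gives $A\Phi_n=\mathcal{C}_n\Psi_{n+r}$. The Laurent polynomial $L:=z^{-r}A$ is then hermitian of degree $r$, so Theorem \ref{DIRECTPROBLEMTHMF}(iii) delivers $u\equiv vL$ in $\PP_n$. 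The initial condition (\ref{IC-INV}) is the $j=r$ case of (\ref{BOLI}), and uniqueness of each $\mathcal{X}_j\in\J_r$ follows from $\gcd(\varphi_j,\varphi_j^*)=1$ together with the degree bounds $\deg Y\leq r-1<r\leq j=\deg\varphi_j$: if a $J$-self-reciprocal matrix with top row $(X,Y)$ annihilates $\Phi_j$, then $X\varphi_j=-Y\varphi_j^*$ forces $\varphi_j\mid Y$, hence $Y=0$ and consequently $X=0$.

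The matrix recurrences (\ref{SHP}) and (\ref{SHP2}) fall out of the direct recurrences (\ref{MAT}) and (\ref{MATILDA}) by algebra. Left-multiplying (\ref{MAT}) by $\mathcal{X}_j$ and right-multiplying by $\mathcal{X}_{j-1}$, and invoking (\ref{MIP}) on both ends, yields $A\mathcal{T}_{j+r}\mathcal{X}_{j-1}=A\mathcal{X}_j\mathcal{S}_j$, which gives (\ref{SHP}) after cancelling $A$. The same trick yields (\ref{SHP2}) from (\ref{MATILDA}) provided one knows the companion identity $\tilde\mathcal{C}_j\tilde\mathcal{X}_j=AI$, which follows directly from $\tilde\mathcal{X}_j=C_j(0)^{-1}\adj(\tilde\mathcal{C}_j)$. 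The main obstacle throughout is verifying that the adjugate sends $\J_r^\reg$ into itself with the expected determinant, but this reduces to a short calculation using the explicit form (\ref{Cm}).
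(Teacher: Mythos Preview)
Your proposal is correct and follows essentially the same route as the paper: define $\mathcal{X}_j=C_j(0)^{-1}\adj(\mathcal{C}_j)$ from Theorem \ref{DIRECTPROBLEMTHMF}, deduce (\ref{BOLI}), (\ref{MIP}), (\ref{detX}) from (\ref{DETAC}), and reverse the adjugate for (iii) $\Rightarrow$ (i). The only cosmetic differences are that the paper cites Corollary \ref{BASISC} for uniqueness where you unpack the $\gcd(\varphi_j,\varphi_j^*)=1$ argument directly, and the paper declares (\ref{SHP}), (\ref{SHP2}) ``direct consequences'' of (\ref{MAT}), (\ref{MATILDA}) while you spell out the sandwich by $\mathcal{X}_j$ and $\mathcal{X}_{j-1}$ (including the auxiliary identity $\tilde\mathcal{C}_j\tilde\mathcal{X}_j=AI$).
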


\begin{proof}
If $u \equiv vL$ in $\PP_n$, Theorem \ref{DIRECTPROBLEMTHMF} assures the existence of $\mathcal{C}_j\in\J_r^\reg$ such that $A \Phi_j = \mathcal{C}_j
\Psi_{j+r}$ for $j=0,\cdots,n$. Multiplying this identity on the left by $\adj (\mathcal{C}_j)$ and taking into account (\ref{DETAC}) we find that
$\Psi_{j+r} = \mathcal{X}_j \Phi_j$ for $j=0,\cdots,n$, where $\mathcal{X}_j = \adj (\mathcal{C}_j)/C_j(0)\in\J_r^\reg$. Then, (\ref{SHP}),
(\ref{SHP2}), (\ref{detX}) and (\ref{MIP}) are a direct consequence of (\ref{MAT}), (\ref{MATILDA}) and (\ref{DETAC}). The uniqueness of
$\mathcal{X}_j\in\J_r$ for $j \geq r$ follows from Corollary \ref{BASISC} and the fact that (\ref{BOLI}) is equivalent to $\psi_{j+r} = X_j \varphi_j
+ Y_j \varphi_j^*$, where $X_j\in\PP_r$, $Y_j\in\PP_{r-1}$ are the polynomials appearing in $\mathcal{X}_j = {\scriptsize \pmatrix{X_j & Y_j \cr
zY_j^* & X_j^*}}$.

It only remains to prove (iii) $\Rightarrow$ (i). Multiplying (\ref{BOLI2}) on the left by $\mathcal{C}_n=\adj(\mathcal{X}_n)\in\J_r^\reg$ we obtain
$A\Phi_n=\mathcal{C}_n\Psi_{n+r}$ where $A=\det\mathcal{X}_n$ is a self-reciprocal polynomial of degree $2r$. This proves that $u \equiv vAz^{-r}$
due to Theorem \ref{DIRECTPROBLEMTHMF}.
\end{proof}

Concerning the polynomial matrix coefficients $\mathcal{X}_j\in\J_r^\reg$ of the inverse relations, when it is necessary we will use the explicit
notation
$$
\mathcal{X}_j = \pmatrix{X_j & Y_j \cr zY_j^* & X_j^*}, \quad \deg X_j = r, \quad \deg Y_j = r-1,
$$
so that $\Psi_{j+r}=\mathcal{X}_j\Phi_j$ is equivalent to $\psi_{j+r}=X_j\varphi_j+Y_j\varphi_j^*$. This shows that $X_j$ is monic. Besides, from
(\ref{X-C}) we have the relations $X_j=C_j^*/C_j(0)$, $Y_j=-D_j/C_j(0)$.

The proof of the previous theorem shows that, when $u\equiv vL$ in $\PP_n$ for some hermitian Laurent polynomial $L$ of degree $r$,
\begin{equation} \label{EQUIVFS}
\Psi_{j+r} = \mathcal{X}_j \Phi_j, \qquad \mathcal{X}_j\in\J_r, \qquad j=0,\dots,n,
\end{equation}
and not only for $j \geq r$. Indeed, the proof of the theorem implies that (\ref{EQUIVFS}) has solutions $\mathcal{X}_j\in\J_r^\reg$ for $j<r$ too.
The only difference is that, contrary to $j \geq r$, (\ref{EQUIVFS}) does not determine $\mathcal{X}_j$ univocally for $j<r$, as Corollary
\ref{BASISC} points out. The reason is that $\mathfrak{B}=\{z^k\psi_j\}_{k=0}^r \cup \{z^k\psi_j^*\}_{k=0}^{r-1}$ is linearly independent for $j \geq
r$, but not for $j<r$. Actually, when $j<r$, Lemma \ref{BASISOC} shows that $\rank(\mathfrak{B})=j+r+1$, so the solutions $\mathcal{X}_j$ of
(\ref{EQUIVFS}) form an affine subspace of dimension $r-j$.

Among the solutions of (\ref{EQUIVFS}) for $j<r$ there is a choice of special interest: similar arguments to those at the beginning of the proof of
Theorem \ref{DIRECTPROBLEMTHMF} show that Lemma \ref{CT=SC} (i), together with (\ref{IC-INV}), assures that $(\ref{SHP})$ can be extended in a unique
way to $j=1,\dots,r$, giving rise to particular solutions $\mathcal{X} _0,\dots,\mathcal{X}_{r-1}\in\J_r^\reg$ of (\ref{EQUIVFS}). The choice of
$\cX_j$ determined by the extension of (\ref{SHP}) has the particularity that $\det\mathcal{X}_j$ is independent of $j$ up to numerical factors.
Indeed, this property characterizes such a particular choice because different solutions of (\ref{EQUIVFS}) can not have proportional determinants:
let $\cX^{(1)},\cX^{(2)}\in\J_r$ be such that $\Psi_{j+r}=\cX^{(k)}\Phi_j$. Then, $(\det\cX^{(k)})\Phi_j=\cC^{(k)}\Psi_{j+r}$ with
$\cC^{(k)}=\adj(\cX^{(k)})$. If $\det\cX^{(2)}=\lambda\det\cX^{(1)}$, $\lambda\in\R^*$, Lemma \ref{MATRICES} assures that
$\cC^{(2)}=\lambda\cC^{(1)}$, thus $\det\cX^{(2)}=\lambda^2\det\cX^{(1)}$, which implies $\lambda=1$, so $\cX^{(2)}=\cX^{(1)}$.

Properties (\ref{MAT}) and (\ref{SHP}) are the cornerstone of the main objective of this section: a new characterization of the relation $u=vL$ in
terms of a recurrence for the corresponding Schur parameters. Like in the previous characterizations, the $J$-self-reciprocal matrices play an
important role, but now only one MOP of $u$ and $v$ enters in the equivalence, and it appears only in the initial condition for the recurrence. The
direct and inverse relations between the MOP of $u$ and $v$ lead to different characterizations, depending on whether the hermitian Laurent
polynomial $L$ is fixed or not. Indeed, $L$ appears explicitly only in the initial condition for the direct characterization.

\begin{thm} \label{NEW}

Let $u$, $v$ be quasi-definite in $\PP_n$, $\PP_{n+r}$ respectively and consider an index $m\in\{0,\dots,n\}$.
\begin{itemize}
\item[(i)] Given a hermitian Laurent polynomial $L$ of degree $r$, $u \equiv vL$ in $\PP_n$ iff there exist $\mathcal{C}_m\in\J_r^\reg$ and
$\mathcal{C}_{m+1},\dots,\mathcal{C}_n\in\J_r$ such that
$$
\begin{array}{l}
\mathcal{C}_m \Psi_{m+r} = A \Phi_m, \kern111pt \mbox{\rm (Direct Initial Condition)}
\medskip \\
\mathcal{C}_j \mathcal{T}_{j+r} = \mathcal{S}_j \mathcal{C}_{j-1}, \quad j=m+1,\dots,n. \kern20pt \mbox{\rm (Direct Recurrence)}
\end{array}
$$
Moreover, $A\Phi_j=\mathcal{C}_j\Psi_{j+r}$, $\mathcal{C}_j\in\J_r^\reg$ and $\det\mathcal{C}_j\propto A$ for $j=m,\dots,n$.
\item[(ii)] There is a hermitian Laurent polynomial $L$ of degree $r$ such that $u \equiv vL$ in $\PP_n$ iff there exist
$\mathcal{X}_m\in\J_r^\reg$ and $\mathcal{X}_{m+1},\dots,\mathcal{X}_n\in\J_r$ such that
$$
\begin{array}{l}
\mathcal{X}_m \Phi_m = \Psi_{m+r}, \kern122pt \mbox{\rm (Inverse Initial Condition)}
\medskip \\
\mathcal{T}_{j+r} \mathcal{X}_{j-1} = \mathcal{X}_j \mathcal{S}_j, \quad j=m+1,\dots,n. \kern20pt \mbox{\rm (Inverse Recurrence)}
\end{array}
$$
Moreover, $\Psi_{j+r}=\mathcal{X}_j\Phi_j$, $\mathcal{X}_j\in\J_r^\reg$ and $\det\mathcal{X}_j \propto A$ for $j=m,\dots,n$.
\end{itemize}

\end{thm}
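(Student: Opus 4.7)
The plan is to derive Theorem \ref{NEW} from the stationary characterizations in Theorems \ref{DIRECTPROBLEMTHMF} and \ref{EQGENERALPROBLEMTHMF}, combined with Lemma \ref{CT=SC}, which is the tool that converts the algebraic content of the matrix recurrences into preservation of the $J$-self-reciprocal structure and of its regularity. The forward implications are essentially restatements: for part (i), Theorem \ref{DIRECTPROBLEMTHMF} supplies $\mathcal{C}_j\in\J_r^\reg$ with $A\Phi_j=\mathcal{C}_j\Psi_{j+r}$, the recurrence (\ref{MAT}), and (\ref{DETAC}); for part (ii), Theorem \ref{EQGENERALPROBLEMTHMF} supplies the analogous data for $\mathcal{X}_j$ when $m\ge r$, and when $m<r$ one takes $\mathcal{X}_m$ from the ``particular choice'' family discussed after the proof of Theorem \ref{EQGENERALPROBLEMTHMF}, which by construction belongs to $\J_r^\reg$.

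For the backward direction of (i), fix the data and prove by induction on $j\in\{m,\dots,n\}$ the conjunction $A\Phi_j=\mathcal{C}_j\Psi_{j+r}$ and $\mathcal{C}_j\in\J_r^\reg$. The base case $j=m$ is the hypothesis. The identity propagates via
\[
A\Phi_j=\mathcal{S}_j(A\Phi_{j-1})=\mathcal{S}_j\mathcal{C}_{j-1}\Psi_{j+r-1}=\mathcal{C}_j\mathcal{T}_{j+r}\Psi_{j+r-1}=\mathcal{C}_j\Psi_{j+r},
\]
where I use the transfer-matrix relations (\ref{TM}) in the outer equalities and the Direct Recurrence in the third. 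Regularity propagates by applying Lemma \ref{CT=SC}(ii) to $\mathcal{C}_j\mathcal{T}_{j+r}=\mathcal{S}_j\mathcal{C}_{j-1}$: quasi-definiteness of $v$ on $\PP_{n+r}$ gives $|b_{j+r}|\ne 1$ (the standing hypothesis of the lemma), and quasi-definiteness of $u$ on $\PP_n$ gives $|a_j|\ne 1$ (the extra condition in the regularity clause), so $\mathcal{C}_{j-1}\in\J_r^\reg$ forces $\mathcal{C}_j\in\J_r^\reg$. Once the induction closes at $j=n$, Theorem \ref{DIRECTPROBLEMTHMF}(iii) yields $u\equiv vL$ in $\PP_n$, and (\ref{DETAC}) gives $\det\mathcal{C}_j\propto A$.

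The backward direction of (ii) follows the same template with $\mathcal{C}_j$ replaced by $\mathcal{X}_j$ and the recurrence direction reversed:
\[
\Psi_{j+r}=\mathcal{T}_{j+r}\Psi_{j+r-1}=\mathcal{T}_{j+r}\mathcal{X}_{j-1}\Phi_{j-1}=\mathcal{X}_j\mathcal{S}_j\Phi_{j-1}=\mathcal{X}_j\Phi_j,
\]
using the Inverse Recurrence. Regularity is preserved by the same lemma after the formal relabeling $\mathcal{S}\leftrightarrow\mathcal{T}_{j+r}$, $\mathcal{T}\leftrightarrow\mathcal{S}_j$, which casts the Inverse Recurrence as an instance of the system considered in Lemma \ref{CT=SC}(ii) and yields $\mathcal{X}_j\in\J_r^\reg$ exactly as before; alternatively one can transport regularity through the correspondence $\mathcal{C}_j=\adj(\mathcal{X}_j)/X_j(0)$ and invoke part (i). To recover the missing Laurent polynomial, set $A:=\det\mathcal{X}_n$, which lies in $\PP_{2r}$, is self-reciprocal there, and satisfies $A(0)=X_n(0)X_n^*(0)\ne 0$ since $\mathcal{X}_n\in\J_r^\reg$; hence $A$ has exact degree $2r$ and $L:=z^{-r}A$ is a hermitian Laurent polynomial of degree $r$. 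Theorem \ref{EQGENERALPROBLEMTHMF}(iii) applied to $\Psi_{n+r}=\mathcal{X}_n\Phi_n$ then delivers $u\equiv vL$ in $\PP_n$. The principal obstacle is the bookkeeping of regularity along the induction: one must verify that the compatibility constraints of Lemma \ref{CT=SC} are automatic (not additional hypotheses) at each step, which is precisely the role played by the quasi-definiteness of $u$ on $\PP_n$ and of $v$ on $\PP_{n+r}$, and which is what makes it sufficient to demand regularity only of the single initial matrix $\mathcal{C}_m$ (resp.\ $\mathcal{X}_m$).
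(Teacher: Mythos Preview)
Your proof is correct and follows essentially the same approach as the paper's: reduce the forward direction to Theorems \ref{DIRECTPROBLEMTHMF} and \ref{EQGENERALPROBLEMTHMF}, and for the backward direction propagate the identity $A\Phi_j=\mathcal{C}_j\Psi_{j+r}$ (resp.\ $\Psi_{j+r}=\mathcal{X}_j\Phi_j$) along the recurrence using the transfer-matrix relations, while tracking regularity. The only cosmetic difference is that the paper handles regularity by taking determinants in the recurrence to obtain $(1-|b_{j+r}|^2)\det\mathcal{C}_j=(1-|a_j|^2)\det\mathcal{C}_{j-1}$ directly, whereas you invoke the regularity clause of Lemma \ref{CT=SC}(ii); these are the same computation packaged two ways.
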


\begin{proof}
We will prove only (i), the proof of (ii) being similar. In view of Theorem \ref{DIRECTPROBLEMTHMF}, it suffices to show that Direct Initial Condition
and Direct Recurrence imply $A\Phi_j=\mathcal{C}_j\Psi_{j+r}$, $j=m,\dots,n$ and $\mathcal{C}_j\in\J_r^\reg$, $j=m+1,\dots,n$ when
$\mathcal{C}_m\in\J_r^\reg$. Direct Recurrence yields $(1-|b_{j+r}|^2)\det\mathcal{C}_j=(1-|a_j|^2)\det\mathcal{C}_{j-1}$, thus
$\mathcal{C}_m\in\J_r^\reg$ implies $\mathcal{C}_j\in\J_r^\reg$ for $j=m+1,\dots,n$. Also, Direct Recurrence and Direct Initial Condition combined
with recurrence relations (\ref{TM}) lead to $A\Phi_j = A\mathcal{S}_j\cdots\mathcal{S}_{m+1}\Phi_m =
\mathcal{S}_j\cdots\mathcal{S}_{m+1}\mathcal{C}_m\Psi_{m+r} = \mathcal{C}_j\mathcal{T}_{j+r}\cdots\mathcal{T}_{m+r+1}\Psi_{m+r} =
\mathcal{C}_j\Psi_{j+r}$ for $j=m,\dots,n$.
\end{proof}

Some special cases of the above theorem will be of interest for us. We will summarize them.

\begin{thm} \label{EQUIVALENCESRRGC}

Let $u$, $v$ be quasi-definite in $\PP_n$, $\PP_{n+r}$ respectively.

\item{\underline{\rm Direct characterization}} Given a hermitian Laurent polynomial $L$ of degree $r$, the following statements are equivalent:
\begin{itemize}
\item[(i)] $u \equiv vL$ in $\PP_n$.
\item[(ii)] There exist $\mathcal{C}_0\in\J_r^\reg$ and $\mathcal{C}_1,\dots,\mathcal{C}_n\in\J_r$ such that
$$
\begin{array}{l}
\mathcal{C}_0\Psi_r = A \pmatrix{1 \cr 1}, \kern98pt \mbox{\rm (Initial Condition D)}
\medskip \\
\mathcal{C}_j \mathcal{T}_{j+r} = \mathcal{S}_j \mathcal{C}_{j-1}, \quad j=1,\dots,n. \kern25pt \mbox{\rm (Recurrence D)}
\end{array}
$$
\end{itemize}

\item{\underline{\rm Inverse characterization}} The following statements are equivalent:
\begin{itemize}
\item[(i)] $u \equiv vL$ in $\PP_n$ for some hermitian Laurent polynomial $L$ of degree $r$.
\item[(ii)] There exist $\mathcal{X}_r\in\J_r^\reg$ and $\mathcal{X}_{r+1},\dots,\mathcal{X}_n\in\J_r$ such that
$$
\begin{array}{l}
\mathcal{X}_r \Phi_r = \Psi_{2r}, \kern139pt \mbox{\rm (Initial Condition I1)}
\medskip \\
\mathcal{T}_{j+r} \mathcal{X}_{j-1} = \mathcal{X}_j \mathcal{S}_j, \quad j=r+1,\dots,n. \kern25pt \mbox{\rm (Recurrence I1)}
\end{array}
$$
\item[(iii)] There exist $\mathcal{X}_0\in\J_r^\reg$ and $\mathcal{X}_1,\dots,\mathcal{X}_n\in\J_r$ such that
$$
\begin{array}{l}
\mathcal{X}_0 \pmatrix{1 \cr 1} = \Psi_r, \kern124pt \mbox{\rm (Initial Condition I2)}
\medskip \\
\mathcal{T}_{j+r} \mathcal{X}_{j-1} = \mathcal{X}_j \mathcal{S}_j, \quad j=1,\dots,n. \kern39pt \mbox{\rm (Recurrence I2)}
\end{array}
$$
\end{itemize}

\end{thm}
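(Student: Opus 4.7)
The statement is a direct specialization of Theorem~\ref{NEW} to the endpoint indices $m=0$ and $m=r$. The plan is to identify each of the three parts as such an instance and then simply collect the resulting equivalences.

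For the Direct characterization I would apply Theorem~\ref{NEW}(i) with $m=0$. Recurrence D is exactly the generic Direct Recurrence ranging over $j=1,\dots,n$. For the initial condition, since $\varphi_0=1$ is the monic polynomial of degree $0$ we have $\Phi_0=\pmatrix{1\cr1}$, so the generic identity $\mathcal{C}_0\Psi_r=A\Phi_0$ collapses to Initial Condition D. In exactly the same way, Theorem~\ref{NEW}(ii) specialized to $m=0$ yields the Inverse variant (iii): the initial condition $\mathcal{X}_0\Phi_0=\Psi_r$ reduces to Initial Condition I2 by the same observation $\Phi_0=\pmatrix{1\cr1}$. The Inverse variant (ii) is Theorem~\ref{NEW}(ii) with $m=r$: Initial Condition I1 is the generic $\mathcal{X}_r\Phi_r=\Psi_{2r}$ (which matches (\ref{IC-INV})), and Recurrence I1 is the generic Inverse Recurrence restricted to $j=r+1,\dots,n$.

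The one aspect inherited from Theorem~\ref{NEW} that deserves emphasis is the propagation of the regularity property $\mathcal{C}_j\in\J_r^\reg$ (resp.\ $\mathcal{X}_j\in\J_r^\reg$) from the single hypothesis at $j=m$ down the recurrence. This is delivered by Lemma~\ref{CT=SC}: quasi-definiteness of $u$ in $\PP_n$ forces $|a_j|\neq 1$ for $j=1,\dots,n$, and quasi-definiteness of $v$ in $\PP_{n+r}$ forces $|b_{j+r}|\neq 1$ for the same range, precisely the hypotheses needed to iterate the lemma. Since this bookkeeping has already been carried out inside Theorem~\ref{NEW}, no new argument is required here.

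The point where I would expect to spend the most care is the Inverse variant (iii). As the discussion following Theorem~\ref{EQGENERALPROBLEMTHMF} notes, the initial condition $\mathcal{X}_0\pmatrix{1\cr1}=\Psi_r$ does not determine $\mathcal{X}_0$ uniquely (the solutions form an $r$-dimensional affine subspace), so the statement must be read as asserting the existence of \emph{some} distinguished regular solution. In the forward direction (i)$\Rightarrow$(iii) such an $\mathcal{X}_0\in\J_r^\reg$ is produced by the backward-extension argument already sketched after Theorem~\ref{EQGENERALPROBLEMTHMF}, iterating Lemma~\ref{CT=SC}(i) downward from the uniquely defined $\mathcal{X}_r$; in the reverse direction (iii)$\Rightarrow$(i) one iterates Recurrence I2 forward to obtain $\mathcal{X}_1,\dots,\mathcal{X}_n\in\J_r^\reg$, checks $\Psi_{j+r}=\mathcal{X}_j\Phi_j$ by induction exactly as in the proof of Theorem~\ref{NEW}(ii), and then invokes Theorem~\ref{EQGENERALPROBLEMTHMF} to recover $u\equiv vL$ in $\PP_n$ with $L=z^{-r}\det\mathcal{X}_n$.
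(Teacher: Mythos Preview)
Your proposal is correct and matches the paper's own treatment: the paper does not give a separate proof of this theorem but presents it explicitly as ``some special cases of the above theorem'' (i.e., of Theorem~\ref{NEW}), precisely the specializations $m=0$ and $m=r$ that you identify. Your additional remarks on regularity propagation and the non-uniqueness of $\mathcal{X}_0$ in variant~I2 are accurate elaborations that the paper defers to the surrounding discussion rather than to a formal proof.
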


The difference between the inverse characterizations I1 and I2 is that the initial condition determines univocally the initial matrix $\mathcal{X}_r$
for I1 but not the initial matrix $\mathcal{X}_0$ for I2, thus there is a freedom in such initial matrix for I2. We will go back to this point later
on.

Theorems \ref{DIRECTPROBLEMTHMF}, \ref{EQGENERALPROBLEMTHMF}, \ref{NEW} and \ref{EQUIVALENCESRRGC} have an obvious generalization to the
quasi-definite case.

Theorem \ref{EQUIVALENCESRRGC} shows that the regularity of $\mathcal{C}_j$, $j\neq0$, and $\mathcal{X}_j$, $j\neq r$, is a superfluous condition in
statement (ii) of Theorems \ref{DIRECTPROBLEMTHMF} and \ref{EQGENERALPROBLEMTHMF} respectively. Remember that the regularity of $\mathcal{C}_0$ is
equivalent to $v[L]\neq0$. On the other hand, the regularity conditions for $\mathcal{X}_j$ in Theorems \ref{EQGENERALPROBLEMTHMF} and
\ref{EQUIVALENCESRRGC} can be completely avoided if we do not fix the degree of $L$. In other words, if $\mathcal{X}_j\in\J_r\setminus\J_r^\reg$ then
$u \equiv vL$ in $\PP_n$ too, but $\deg L<r$, as follows from the following proposition.

\begin{prop} \label{RED}

If $\Psi_{j+r}=\mathcal{X}_j\Phi_j$ with $\mathcal{X}_j\in\J_r\setminus\J_r^\reg$, then $\Psi_{j+r-1}=\hat\mathcal{X}_j\Phi_j$ with
$\hat\mathcal{X}_j\in\J_{r-1}$.

\end{prop}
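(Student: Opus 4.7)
The hypothesis $\cX_j \in \J_r \setminus \J_r^\reg$ is equivalent to $X_j(0) = 0$: since $X_j$ is monic of degree $r$, one has $X_j^*(0)=1$, so $\det\cX_j(0) = X_j(0)X_j^*(0) - 0\cdot Y_j(0) = X_j(0)$. I would therefore start by writing $X_j(z) = z\hat X_j(z)$ with $\hat X_j$ monic of degree $r-1$, and observing that evaluating $\psi_{j+r}=X_j\varphi_j + Y_j\varphi_j^*$ at $z=0$, together with $\varphi_j^*(0)=1$, forces $Y_j(0) = b_{j+r}$. These two divisibility facts will drive everything.

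The core step is to undo the last transfer matrix. Since $\psi_{j+r}$ is a genuine MOP, $|b_{j+r}|\neq 1$ and $\cT_{j+r}$ is invertible, so $\Psi_{j+r-1} = \cT_{j+r}^{-1}\cX_j\Phi_j$. Concretely, combining the recurrence identity $(1-|b_{j+r}|^2)\,z\,\psi_{j+r-1} = \psi_{j+r}-b_{j+r}\psi_{j+r}^*$ with $\psi_{j+r}=X_j\varphi_j+Y_j\varphi_j^*$ and $\psi_{j+r}^* = zY_j^*\varphi_j + X_j^*\varphi_j^*$ yields
$$
(1-|b_{j+r}|^2)\,z\,\psi_{j+r-1} = (X_j - b_{j+r}zY_j^*)\varphi_j + (Y_j - b_{j+r}X_j^*)\varphi_j^*.
$$
Both parenthesised polynomials vanish at $z=0$ precisely because $X_j(0)=0$ and $Y_j(0)-b_{j+r}X_j^*(0)=Y_j(0)-b_{j+r}=0$, so I can cancel $z$ and read off
$$
\psi_{j+r-1} = \hat X\,\varphi_j + \hat Y\,\varphi_j^*, \qquad
\hat X = \frac{\hat X_j - b_{j+r}Y_j^*}{1-|b_{j+r}|^2}, \qquad
\hat Y = \frac{Y_j - b_{j+r}X_j^*}{z(1-|b_{j+r}|^2)}.
$$

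The final step is to verify that this identifies $\hat\cX_j$ as an element of $\J_{r-1}$ in the sense of $(\ref{Cm})$ at level $r-1$. The coefficient of $z^{r-1}$ in $Y_j^*$ equals $\overline{Y_j(0)}=\overline{b_{j+r}}$, so the leading coefficient of $\hat X_j - b_{j+r}Y_j^*$ is exactly $1-|b_{j+r}|^2$, showing $\hat X$ is monic of degree $r-1$. The coefficient of $z^r$ in $Y_j - b_{j+r}X_j^*$ vanishes (since $Y_j\in\PP_{r-1}$ and the coefficient of $z^r$ in $X_j^*$ is $\overline{X_j(0)}=0$), so $\hat Y\in\PP_{r-2}$. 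The companion identity $\psi_{j+r-1}^* = z\hat Y^{*_{r-2}}\varphi_j + \hat X^{*_{r-1}}\varphi_j^*$ is obtained by applying the operator $*_{j+r-1}$ to the formula for $\psi_{j+r-1}$, using the key observation that $X_j^{*_r} = \hat X_j^{*_{r-1}}$ (a consequence of $X_j = z\hat X_j$). The main obstacle is precisely this bookkeeping of degrees: ensuring not merely $J$-self-reciprocity in $\PP_{r-1}$ but that $\hat X$ actually reaches degree $r-1$ and $\hat Y$ does not exceed degree $r-2$, which is what places $\hat\cX_j$ in $\J_{r-1}$ rather than in some larger space.
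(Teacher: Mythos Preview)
Your proof is correct and follows essentially the same approach as the paper: factor $X_j=z\hat X_j$, read off $Y_j(0)=b_{j+r}$ from the constant term, invert the transfer matrix $\cT_{j+r}$, and verify the resulting coefficients place $\hat\cX_j$ in $\J_{r-1}$. The only cosmetic difference is that the paper computes $\cT_{j+r}^{-1}\cX_j$ directly as a $2\times2$ matrix (using the identity $X_j^{*_r}=\hat X_j^{*_{r-1}}$ up front), whereas you extract the first row scalarly and recover the second row via $*_{j+r-1}$; the computations and degree checks are the same.
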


\begin{proof}
Suppose $\Psi_{j+r}=\mathcal{X}_j\Phi_j$, $\mathcal{X}_j\in\J_r$ with $X_j(0)=0$. Then $b_{j+r}=Y_j(0)$ and $X_j = z \hat X_j$ with $\hat X_j$ monic
of degree $r-1$. Thus we can write
$$
\Psi_{j+r} = \pmatrix{z \hat X_j & Y_j \cr zY_j^* & \hat X_j^*} \Phi_j.
$$
From $\Psi_{j+r}=\mathcal{T}_{j+r}\Psi_{j+r-1}$ we get $\Psi_{j+r-1}=\hat\mathcal{X}_j\Phi_j$ where
$$
\hat\mathcal{X}_j = {1\over 1-|b_{j+r}|^2}
\pmatrix{\hat{X}_j-b_{j+r}Y_j^* & z^{-1}(Y_j-b_{j+r}\hat{X}_j^*) \cr z(Y_j^*-\overline{b}_{j+r}\hat{X}_j) & \hat{X}_j^*-\overline{b}_{j+r}Y_j}.
$$
Since $Y_j(0)-b_{j+r}\hat{X}_j^*(0)=0$ and $\hat{X}_j^*(0)-\overline{b}_{j+r}Y_j(0)=1-|b_{j+r}|^2\neq0$ we conclude that
$\hat\mathcal{X}_j\in\J_{r-1}$.
\end{proof}

\section{Direct and inverse problems} \label{DIP}

In the previous section, given two hermitian linear functionals $u$, $v$ and a hermitian Laurent polynomial $L$, we have studied the relation
$u\equiv vL$ obtaining characterizations in terms of linear relations with polynomial coefficients between the corresponding MOP, as well as in terms
of a matrix difference equation between the related Schur parameters. In this section we will use these results to answer the following question:
Which conditions ensure the quasi-definiteness of $u=vL$ or $v$ once we know that the other functional is quasi-definite?

Indeed we will answer this question in the more general context of quasi-definite functionals in some subspace $\PP_n$: we will try to know the
minimum length of the finite segments of MOP for one of the functionals assuming that the other functional has a finite segment of MOP with a given
length. Like in the previous section, the main goal is to develop techniques for this problem based almost exclusively on the knowledge of the Schur
parameters.

The new results will seem quite similar to those of the previous section, however they provide new information: in the previous section we assumed
that $u$ and $v$ had finite segments of MOP of certain length and we asked about a characterization of the relation $u \equiv vL$ in some subspace
$\PP_n$; now we will consider the relation $u = vL$ as a data and we will ask about the length of the finite segments of MOP.

\subsection{Direct problem}

The direct problem refers to the case where we suppose that a hermitian functional $v$ with a finite segment of MOP $(\psi_j)_{j=0}^m$ and a
hermitian polynomial $L$ of degree $r$ are given. Then, we will try to obtain information about the functional $u=vL$ and its finite segments of MOP
$(\varphi_j)_{j=0}^n$. Our first result is essentially a reinterpretation of relation (\ref{GPRMF}).

\begin{thm} \label{DIRECTPROBLEMTH}

Let $v$ be quasi-definite in $\PP_{n+r}$ and let $L$ be a hermitian Laurent polynomial of degree $r$. Then, $u=vL$ is quasi-definite in $\PP_n$ iff
there exists $\mathcal{C}_j\in\J_r^\reg$ such that $A$ divides $\mathcal{C}_j \Psi_{j+r}$ for $j=0,\dots,n$.

Besides, $\det\mathcal{C}_j \propto A$ and there is a unique choice of $\mathcal{C}_j$ such that $C_j^*(0)=A(0)$. For such a choice the finite
segment of MOP with respect to $u$ is given by $A \Phi_j = \mathcal{C}_j \Psi_{j+r}$ for $j=0,\dots,n$.

\end{thm}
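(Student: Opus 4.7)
The forward direction is immediate from Theorem~\ref{DIRECTPROBLEMTHMF}: if $u=vL$ is quasi-definite in $\PP_n$ then $u\equiv vL$ in $\PP_n$ tautologically, so Theorem~\ref{DIRECTPROBLEMTHMF}(ii) supplies $\mathcal{C}_j\in\J_r^{\reg}$ satisfying $A\Phi_j=\mathcal{C}_j\Psi_{j+r}$, which is a stronger statement than the desired divisibility.

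For the converse, assume $\mathcal{C}_j\in\J_r^{\reg}$ with $A\mid\mathcal{C}_j\Psi_{j+r}$ for $j=0,\dots,n$, and set $\Phi_j:=\mathcal{C}_j\Psi_{j+r}/A =: (\varphi_j,\tilde\varphi_j)^T$. The key structural step is to show that $\tilde\varphi_j=\varphi_j^*$: apply the $*_{2r+j}$ operator to the top identity $A\varphi_j = C_j\psi_{j+r}+D_j\psi_{j+r}^*$, and use $A^{*_{2r}}=A$, $C_j^{*_r}=C_j^*$, $\psi_{j+r}^{*_{j+r}}=\psi_{j+r}^*$, and $D_j^{*_r}(z)=zD_j^*(z)$ (the last because $\deg D_j\le r-1$), together with the multiplicativity of $*$ under products, to obtain $A\varphi_j^* = C_j^*\psi_{j+r}^* + zD_j^*\psi_{j+r}=A\tilde\varphi_j$. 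Leading-coefficient analysis at $z^{2r+j}$ gives $\deg\varphi_j=j$. The orthogonality of $\varphi_j$ with respect to $u=vL$ is then the calculation already carried out in the (ii)$\Rightarrow$(i) half of the proof of Theorem~\ref{GENERALPROBLEMTH}: for $0\le k\le j-1$ the moment $v[(C_j\psi_{j+r}+D_j\psi_{j+r}^*)z^{-(k+r)}]$ vanishes term by term by the orthogonality of $\psi_{j+r}$ and $\psi_{j+r}^*$ combined with the degree bounds $\deg C_j\le r$, $\deg D_j\le r-1$; for $k=j$ only the constant term of $C_j$ survives, yielding $u[\varphi_j z^{-j}]=C_j(0)\varepsilon_{j+r}\neq 0$ by regularity of $\mathcal{C}_j$ and quasi-definiteness of $v$ in $\PP_{n+r}$. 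Hence $\varphi_j$ is a nonzero scalar multiple of the $j$-th MOP of $u$, so $u$ is quasi-definite in $\PP_n$.

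It remains to establish $\det\mathcal{C}_j\propto A$ and the existence and uniqueness of the normalization $C_j^*(0)=A(0)$ producing the MOP. Since $A\mid\mathcal{C}_j\Psi_{j+r}$ and $\gcd(\psi_{j+r},\psi_{j+r}^*)=1$, one deduces $A\mid\det\mathcal{C}_j$; both polynomials are self-reciprocal of exact degree $2r$ (the $J$-self-reciprocity of $\mathcal{C}_j$ forces self-reciprocity of its determinant), so they coincide up to a real scalar. Because the rescaling $\mathcal{C}_j\mapsto\lambda\mathcal{C}_j$ preserves $\J_r$ only for $\lambda\in\R$, the uniqueness claim hinges on the observation that $\varphi_j$ is automatically a \emph{real} multiple of the monic MOP: writing the MOP as $\psi$ and $\varphi_j=\mu\psi$, the structural identity $\Phi_j=(\varphi_j,\varphi_j^*)^T=\mu(\psi,\psi^*)^T$ forces $\bar\mu=\mu$ upon comparing second entries. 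The unique real rescaling achieving $C_j^*(0)=A(0)$ is therefore well-defined and makes $\varphi_j$ monic, so $A\Phi_j=\mathcal{C}_j\Psi_{j+r}$ now recovers the MOP of $u$. The main obstacle is not any single substantive computation—the orthogonality part piggybacks on Theorem~\ref{GENERALPROBLEMTH}—but rather the bookkeeping around normalization, namely verifying that the real scaling freedom inside $\J_r$ is genuinely enough to reach the monic normalization; this is exactly what the $\bar\mu=\mu$ observation resolves.
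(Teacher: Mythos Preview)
Your orthogonality argument is correct and is essentially the paper's approach, but the normalization step contains a genuine error. You claim that the structural identity $\Phi_j=(\varphi_j,\varphi_j^*)^T=\mu(\psi,\psi^*)^T$ forces $\bar\mu=\mu$; this is false. If $\varphi_j=\mu\psi$ with $\psi$ monic of degree $j$, then $\varphi_j^{*_j}=(\mu\psi)^{*_j}=\bar\mu\psi^*$, so $\Phi_j=(\mu\psi,\bar\mu\psi^*)^T$ holds automatically for \emph{any} $\mu\in\C^*$ and there is no constraint making $\mu$ real. In particular, your premise that the scaling freedom inside $\J_r$ is only $\mathcal{C}_j\mapsto\lambda\mathcal{C}_j$ with $\lambda\in\R$ misidentifies the relevant symmetry: the transformation that preserves $\J_r$ and the divisibility condition is $\mathcal{C}_j\mapsto\scriptsize\pmatrix{\lambda&0\cr0&\bar\lambda}\mathcal{C}_j$ with $\lambda\in\C^*$, which is exactly the rescaling the paper invokes at the outset of its proof. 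Under this correct freedom $C_j^*(0)\mapsto\bar\lambda\,C_j^*(0)$, so a unique complex $\lambda$ achieves $C_j^*(0)=A(0)$; the leading coefficient of $\varphi_j$, which a comparison at $z^{2r+j}$ shows to be $\overline{C_j^*(0)/A(0)}$, then equals $1$, and $\varphi_j$ is monic.

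The cleanest repair is to normalize first, as the paper does: observe that $A\mid\mathcal{C}_j\Psi_{j+r}$ and membership in $\J_r^{\reg}$ are both invariant under $\mathcal{C}_j\mapsto\scriptsize\pmatrix{\lambda&0\cr0&\bar\lambda}\mathcal{C}_j$, so assume $C_j^*(0)=A(0)$ from the start; your orthogonality computation then shows the now-monic $\varphi_j$ is the $j$-th MOP of $u$, and uniqueness of $\mathcal{C}_j$ under this normalization follows from the uniqueness of the decomposition in Corollary~\ref{BASISC} (equivalently, from Theorem~\ref{GENERALPROBLEMTH}). Your direct argument for $\det\mathcal{C}_j\propto A$ via $\gcd(\psi_{j+r},\psi_{j+r}^*)=1$ is correct and is a nice alternative to the paper's deferral to Theorem~\ref{DIRECTPROBLEMTHMF}.
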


\begin{proof}

First of all notice that, no matter the value of $\lambda_j\in\C^*$, $A$ divides $\mathcal{C}_j\Psi_{j+r}$ iff it divides
$\hat\mathcal{C}_j\Psi_{j+r}$ with $\hat\mathcal{C}_j=\scriptsize\pmatrix{\lambda_j&0 \cr 0&\overline\lambda_j}\mathcal{C}_j$, and
$\mathcal{C}_j\in\J_r^\reg$ iff $\hat\mathcal{C}_j\in\J_r^\reg$. Therefore, we can suppose without loss of generality that $C_j^*(0)=A(0)$. Then, the
divisibility condition is equivalent to $A\Phi_j=\mathcal{C}_j\Psi_{j+r}$ with $\varphi_j$ a monic polynomial of degree $j$ which, for the moment,
has no relation with $u$. Taking into account Theorem \ref{DIRECTPROBLEMTHMF}, to prove the result we only need to see that $\varphi_j$ is the $j$-th
MOP with respect to $u$. This follows from the orthogonality conditions of $\psi_{j+r}$ with respect to $v$, which give
$$
\begin{array}{l}
u\bigl[\varphi_j  z^{-k}\bigl] = v\bigl[\bigl(C_j \psi_{j+r} + D_j \psi_{j+r}^*\bigr) z^{-(k+r)}\bigr] = 0, \qquad 0\leq k\leq j-1,
\medskip \\
u\bigl[\varphi_j z^{-j}\bigl] = v\bigl[\bigl(C_j \psi_{j+r} + D_j \psi_{j+r}^*\bigr) z^{-(j+r)}\bigr] = C_j(0)\varepsilon_{j+r}\not= 0.
\end{array}
$$
The rest of the theorem is a consequence of Theorem \ref{DIRECTPROBLEMTHMF}.
\end{proof}

The above results allow us to obtain a necessary and sufficient condition for the quasi-definiteness of the functional $u=vL$ in terms of
determinants involving the MOP of $v$.

\begin{prop} \label{DETERMINANT}

Let $v$ be quasi-definite in $\PP_{n+r}$ and let $L$ be a hermitian Laurent polynomial of degree $r$. Then, $u=vL$ is quasi-definite in $\PP_n$ iff
$\det M^{(m)}\neq0$ for $m=0,\dots,n+1$, where $M^{(m)}=(M^{(m)}_{ij})_{i,j=1}^{2r}$ is the square matrix of order $2r$ given by
$$
M^{(m)}_{ij} = \cases{\bigl(z^{j-1}\psi_{m+r}\bigr)^{(l_i}(\zeta_i), & $j=1,\dots r$,
                      \cr \bigl(z^{j-r-1} \psi_{m+r}^*\bigr)^{(l_i}(\zeta_i), & $j=r+1,\dots,2r$,}
                      \quad i=1,\dots,2r,
$$
with $\zeta_1,\dots,\zeta_{2r}$ the roots of $A$ counting the multiplicity and $l_i$ the number of roots $\zeta_j$,  $j<i$, such that $\zeta_j =
\zeta_i$.

\end{prop}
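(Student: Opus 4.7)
The plan is to show that $\det M^{(m)}\neq 0$ is equivalent to the non-vanishing of the $m$-th Toeplitz determinant $\Delta_m^{(u)}:=\det(u[z^{i-j}])_{i,j=0}^{m-1}$ of $u$ (with the convention $\Delta_0^{(u)}:=1$). By the classical characterization of quasi-definiteness, $u=vL$ is quasi-definite in $\PP_n$ iff $\Delta_m^{(u)}\neq 0$ for $m=1,\dots,n+1$ (equivalently, iff $e_m=\Delta_{m+1}^{(u)}/\Delta_m^{(u)}\neq 0$ for $m=0,\dots,n$); together with the trivially true $\Delta_0^{(u)}=1\neq 0$, this is exactly the condition stated in the proposition.

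I would introduce the linear subspace
$$K_m:=\{(C,D)\in\PP_r\times\PP_{r-1}:A\mbox{ divides }C\psi_{m+r}+D\psi_{m+r}^*\}.$$
By Corollary \ref{BASISC}, the map $(C,D)\mapsto C\psi_{m+r}+D\psi_{m+r}^*$ is injective onto a $(2r+1)$-dimensional subspace of $\PP_{m+2r}$; intersecting this image with $A\PP_m$ (of dimension $m+1$) inside $\PP_{m+2r}$ (of dimension $m+2r+1$) yields $\dim K_m\geq 1$ by a dimension count. The matrix $M^{(m)}$ encodes precisely this divisibility condition restricted to the hyperplane $c_r=0$ (i.e.\ $C\in\PP_{r-1}$): its rows express the $l_i$-th order vanishing at each root $\zeta_i$ of $A$ accounting for multiplicities, while the $2r$ columns index the coefficients of $C$ and $D$ in $\PP_{r-1}$. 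Hence $\det M^{(m)}\neq 0$ iff $K_m\cap\{c_r=0\}=\{0\}$, which combined with $\dim K_m\geq 1$ is equivalent to $\dim K_m=1$ with a generator whose leading coefficient $c_r$ is nonzero.

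Next, the injective linear map $\alpha\colon K_m\to\PP_m$, $(C,D)\mapsto g$ with $C\psi_{m+r}+D\psi_{m+r}^*=Ag$, has image $W_m:=\{g\in\PP_m:g\perp_u\PP_{m-1}\}$: indeed $u[gz^{-k}]=v[(C\psi_{m+r}+D\psi_{m+r}^*)z^{-(k+r)}]=0$ for $0\leq k\leq m-1$ by the orthogonality of $\psi_{m+r}$ with respect to $v$. Comparing the coefficient of $z^{m+2r}$ on both sides of $C\psi_{m+r}+D\psi_{m+r}^*=Ag$ gives $c_r=\overline{A(0)}\cdot\mathrm{lc}(g)$ (since $\psi_{m+r}$ is monic and the leading coefficient of the self-reciprocal polynomial $A$ is $\overline{A(0)}$), so the generator of $K_m$ has $c_r\neq 0$ iff $g$ has degree exactly $m$. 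Thus $\det M^{(m)}\neq 0$ iff $W_m$ is one-dimensional with a monic degree-$m$ generator, which is the classical Toeplitz characterization $\Delta_m^{(u)}\neq 0$ of the existence and uniqueness of $\varphi_m$.

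The main delicacy arises in the direction $\Delta_m^{(u)}=0\Rightarrow\det M^{(m)}=0$ when $W_m$ happens to contain a degree-$m$ polynomial non-uniquely (so $\dim W_m=\dim K_m\geq 2$); here the inequality $\dim(K_m\cap\{c_r=0\})\geq\dim K_m-1\geq 1$ still forces $\det M^{(m)}=0$, securing the equivalence uniformly across all sub-cases of the rank behavior of the Toeplitz matrix of $u$.
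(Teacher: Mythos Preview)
Your argument is essentially correct and takes a genuinely different route from the paper's. The paper leans on Theorem~\ref{DIRECTPROBLEMTH}: quasi-definiteness of $u=vL$ in $\PP_n$ means, for each $m\le n$, that the normalized pair $(C_m,D_m)$ exists uniquely \emph{and} $C_m(0)\neq0$; the first condition is translated directly into $\det M^{(m)}\neq0$, while the second is handled by a Cramer-rule column substitution and the recurrence identity $\spn\{z^{j+1}\psi_{m+r},z^j\psi_{m+r}^*\}=\spn\{z^j\psi_{m+r+1},z^j\psi_{m+r+1}^*\}$, which shifts the index to produce $\det M^{(m+1)}$. You bypass this two-step decomposition entirely, establishing instead the clean one-to-one correspondence $\det M^{(m)}\neq0\Leftrightarrow\Delta_m^{(u)}\neq0$ via the spaces $K_m$ and $W_m$. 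This is more conceptual and avoids invoking Theorem~\ref{DIRECTPROBLEMTH}; conversely, the paper's argument stays inside its own $\J_r^\reg$ machinery and makes transparent \emph{why} the range of $m$ extends to $n+1$ (it is exactly the regularity condition $C_n(0)\neq0$).

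There is one gap in your writeup: you assert that the map $\alpha\colon K_m\to\PP_m$ has image equal to $W_m$, but you only verify $\alpha(K_m)\subseteq W_m$. Surjectivity is genuinely needed: in your final paragraph you write $\dim W_m=\dim K_m$, and without it one cannot rule out the scenario $\Delta_m^{(u)}=0$, $\dim W_m\ge2$, yet $\dim K_m=1$ with $c_r\neq0$. The fix is short and uses the same Lemma~\ref{BASISOC}/Corollary~\ref{BASISC} you already invoked for the dimension count: given $g\in W_m$, one has $v[Ag\,z^{-(k+r)}]=u[g\,z^{-k}]=0$ for $0\le k\le m-1$, so $Ag\in(z^r\PP_{m-1})^{\perp_{m+2r}}$, and Corollary~\ref{BASISC} then furnishes $(C,D)\in\PP_r\times\PP_{r-1}$ with $C\psi_{m+r}+D\psi_{m+r}^*=Ag$, i.e.\ $(C,D)\in K_m$ and $\alpha(C,D)=g$. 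With this inclusion reversed, your bijection $K_m\cong W_m$ holds and the rest of your argument goes through.
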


\begin{proof}
By Theorem \ref{DIRECTPROBLEMTH}, to decide the quasi-definiteness of $u=vL$ in $\PP_n$, we simply have to analyze the existence of unique
polynomials $C_m$, $D_m$ with $\deg C_m=r$, $\deg D_m\leq r-1$, $C_m(0)\neq0$, $C_m^*(0)=A(0)$, such that $A$ divides $C_m\psi_{m+r}+D_m\psi_{m+r}^*$
for $m=0,\dots,n$.

Let us write $C_m(z)  = \sum_{k=0}^r c_{m,k} z^k$ and $D_m(z) = \sum_{k=0}^{r-1}d_{m,k}z^k$. The condition $C_m^*(0)=A(0)$ only means that $c_{m,r}$
is the leading coefficient of $A$. Then, the existence of unique polynomials $C_m$, $D_m$ is equivalent to the existence and uniqueness of the $2r$
coefficients $c_{m,0},\dots,c_{m,r-1}$ and $d_{m,0},\dots,d_{m,r-1}$, while the condition $C_m(0)\neq0$ becomes $c_{m,0}\neq0$.

If $\zeta_1,\dots,\zeta_{2r}$ denote the $2r$ roots of the polynomial $A$ counting the multiplicity and $l_i$ is the number of roots $\zeta_j$ such
that $\zeta_j=\zeta_i$ for $j<i$, the divisibility condition is equivalent to the system
$$
(C_m\psi_{m+r})^{(l_i}(\zeta_i) +  (D_m \psi_{m+r}^*)^{(l_i}(\zeta_i) = 0, \qquad i=1,\dots 2r.
$$
This system has a unique solution in $c_{m,k},$  $d_{m,k},$ $k=0,\dots,r-1$, exactly when $\det M^{(m)} \neq 0$.

It remains to translate the condition $c_{m,0}\neq0$. The solution for $c_{m,0}$ is proportional to the determinant of a matrix $M$ obtained
substituting in $M^{(m)}$ the first column $(\psi_{m+r}^{(l_i}(\zeta_i))_{i=1}^{2r}$ by $(z^r\psi_{m+r}^{(l_i}(\zeta_i))_{i=1}^{2r}$. Since
(\ref{RR}) and (\ref{RRR}) imply that $\spn\{z^{j+1}\psi_{m+r},z^j\psi_{m+r}^*\}=\spn\{z^j\psi_{m+r+1},z^j\psi_{m+r+1}^*\}$, we see that $\det M$
vanishes at the same time than $\det M^{(m+1)}$. Hence, $c_{m,0}\neq0$ is equivalent to $\det M^{(m+1)} \neq 0$.
\end{proof}

The condition given by the above proposition is theoretically interesting but in practice it is not manageable, specially for polynomial
perturbations of high degree $r$ due to the need to evaluate determinants of $2r \times 2r$ matrices. Even in case of low degree $r$, the practical
application of the previous result needs the construction of the MOP $\psi_j$ and the evaluation at some points of these MOP and their derivatives.

When $r=1$ the self-reciprocal polynomial $A$ has two roots $\zeta_1$, $\zeta_2$ such that $\zeta_2=1/\overline\zeta_1$ or $\zeta_1,\zeta_2\in\T$,
$\zeta_1\neq\zeta_2$. Obviously, when $v$ is positive definite and $\zeta_2=1/\overline\zeta_1$ the functional $vL$ is positive definite too.
However, in general, $v$ quasi-definite in $\PP_{n+r}$ implies $vL$ quasi-definite in $\PP_n$ iff (see \cite{Su93,Ca,DaHeMa07,ACMV})
$K_m(\zeta_1,1/\overline\zeta_2)\neq0$ for $m=1,\dots,n+1$, where $K_m(z,w)=\sum_{j=0}^m\varepsilon_j^{-1}\psi_j(z)\overline{\psi_j(w)}$ is the
$m$-th kernel associated with the MOP $(\psi_j)$.

Nevertheless, it is naive to think that the general situation can be solved by factoring the polynomial $A$. Consider for instance a positive
definite functional $v$ and let $A(z)\propto(z-\zeta_1)(z-\zeta_2)$ with $\zeta_1,\zeta_2\in\T$, $\zeta_1 \neq \zeta_2$, satisfying
$K_m(\zeta_1,1/\overline\zeta_2)=0$ for some $m$. Then $vL$ is not quasi-definite but $vL^2$ is positive definite.

A more practical characterization of the quasi-definiteness of $u=vL$, which avoids the construction of the MOP of $v$ and does not need the
calculation of determinants, is given in terms of the recurrence for the Schur parameters.

\begin{thm} \label{MATRIXCOEFFPD}

Let $v$ be quasi-definite in $\PP_{n+r}$ and let $L$ be a hermitian Laurent polynomial of degree $r$. Then, $u=vL$ is quasi-definite in $\PP_n$ iff
there exist $a_1,\dots,a_n\in\C$ and $\mathcal{C}_0,\dots,\mathcal{C}_n\in\J_r^\reg$ such that
\begin{equation} \label{DMATINITIALCONDITION}
\kern-85pt \mathcal{C}_0 \Psi_r = A \pmatrix{1 \cr 1},
\end{equation}
\begin{equation} \label{DMAT}
\mathcal{C}_j \mathcal{T}_{j+r} = \mathcal{S}_j \mathcal{C}_{j-1}, \qquad j=1,\dots,n.
\end{equation}
Besides, $A\Phi_j=\mathcal{C}_j\Psi_{j+r}$, $\det\mathcal{C}_j \propto A$, $j=0,\dots,n$, and $a_j=\varphi_j(0)\in\C\setminus\T$, $j=1,\dots,n$.

\end{thm}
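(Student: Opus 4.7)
The statement is essentially Theorem \ref{EQUIVALENCESRRGC} (Direct characterization) combined with the quasi-definiteness result from Theorem \ref{DIRECTPROBLEMTH}. The only genuinely new content is that now we are \emph{not} assuming the existence of MOP $(\varphi_j)$ for $u$; instead we are given complex numbers $a_j$ and matrices $\mathcal{C}_j$ satisfying the recurrence, and we must deduce quasi-definiteness. So the plan splits into a quick forward direction and a constructive reverse direction.

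\textbf{Forward direction.} Assuming $u=vL$ is quasi-definite in $\PP_n$, one has $u\equiv vL$ in $\PP_n$ trivially, and $u$ admits a finite segment of MOP $(\varphi_j)_{j=0}^n$ with Schur parameters $a_j=\varphi_j(0)\in\C\setminus\T$ ($j=1,\dots,n$). Taking these $a_j$ as the prescribed ones, the direct characterization in Theorem \ref{EQUIVALENCESRRGC} produces matrices $\mathcal{C}_0\in\J_r^{\reg}$ and $\mathcal{C}_1,\dots,\mathcal{C}_n\in\J_r$ satisfying the initial condition $\mathcal{C}_0\Psi_r=A\binom{1}{1}$ and the recurrence $\mathcal{C}_j\mathcal{T}_{j+r}=\mathcal{S}_j\mathcal{C}_{j-1}$, and Theorem \ref{NEW}(i) upgrades all of them to $\J_r^{\reg}$. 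The relations $A\Phi_j=\mathcal{C}_j\Psi_{j+r}$ and $\det\mathcal{C}_j\propto A$ come from Theorem \ref{DIRECTPROBLEMTHMF}.

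\textbf{Reverse direction.} This is where we must actually build the MOP of $u=vL$ from the data. First I would define $\varphi_0:=1$ and use the given $a_j$ to propagate via the Szegő-type recursion $\Phi_j:=\mathcal{S}_j\Phi_{j-1}$, obtaining monic polynomials $\varphi_j$ of degree $j$ with $\varphi_j(0)=a_j$. Next, I would prove by induction on $j$ that
\[
A\Phi_j=\mathcal{C}_j\Psi_{j+r},\qquad j=0,\dots,n,
\]
the base case being precisely the initial condition (since $\Phi_0=\binom{1}{1}$), and the inductive step following from
$A\Phi_j=A\mathcal{S}_j\Phi_{j-1}=\mathcal{S}_j\mathcal{C}_{j-1}\Psi_{j+r-1}=\mathcal{C}_j\mathcal{T}_{j+r}\Psi_{j+r-1}=\mathcal{C}_j\Psi_{j+r}$, exactly as in the proof of Theorem \ref{NEW}. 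To know $|a_j|\neq 1$ (so that $\varphi_j$ really qualifies as the $j$-th MOP of its yet-to-be-identified orthogonality functional), I would take determinants in the recurrence, yielding $(1-|b_{j+r}|^2)\det\mathcal{C}_j=(1-|a_j|^2)\det\mathcal{C}_{j-1}$; since each $\mathcal{C}_j\in\J_r^{\reg}$ has non-vanishing $\det\mathcal{C}_j(0)$ and $|b_{j+r}|\neq 1$ by quasi-definiteness of $v$, this forces $|a_j|\neq 1$ and simultaneously $\det\mathcal{C}_j\propto A$.

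\textbf{Orthogonality and conclusion.} With $A\Phi_j=\mathcal{C}_j\Psi_{j+r}$ in hand, it remains only to check that $\varphi_j$ is the $j$-th MOP for $u=vL$. This is the standard computation used already in the proof of Theorem \ref{DIRECTPROBLEMTH}:
\[
u[\varphi_j z^{-k}]=v[A\varphi_j z^{-(k+r)}]=v\bigl[(C_j\psi_{j+r}+D_j\psi_{j+r}^*)z^{-(k+r)}\bigr],
\]
which vanishes for $0\le k\le j-1$ by orthogonality of $\psi_{j+r}$ with respect to $v$, and equals $C_j(0)\varepsilon_{j+r}\neq 0$ for $k=j$ because $\mathcal{C}_j\in\J_r^{\reg}$ means $C_j(0)\neq 0$. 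Hence $u=vL$ is quasi-definite in $\PP_n$ with MOP $(\varphi_j)_{j=0}^n$, and all remaining claims (identification of MOP, $\det\mathcal{C}_j\propto A$, $a_j\in\C\setminus\T$) are already in place.

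I do not expect any serious obstacle: the content is essentially a repackaging of Theorems \ref{DIRECTPROBLEMTHMF}, \ref{NEW} and \ref{EQUIVALENCESRRGC}. The only conceptual point requiring care is that, in contrast to those earlier statements, $u$ is not assumed quasi-definite \emph{a priori}; the $\varphi_j$ must be manufactured from the Szegő recursion driven by the given $a_j$, and their MOP status with respect to $u=vL$ must be verified a posteriori using the matrix identity $A\Phi_j=\mathcal{C}_j\Psi_{j+r}$ together with regularity.
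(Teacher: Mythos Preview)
Your proposal is correct and follows essentially the same route as the paper: dispose of the forward direction via Theorem \ref{EQUIVALENCESRRGC}, and for the reverse direction build $\Phi_j$ from the Szeg\H{o} recursion driven by the given $a_j$, establish $A\Phi_j=\mathcal{C}_j\Psi_{j+r}$ by iterating the recurrence from the initial condition, and then conclude quasi-definiteness of $u$ (the paper simply cites Theorem \ref{DIRECTPROBLEMTH} for this last step, whereas you reproduce its orthogonality computation explicitly and add the determinant argument for $|a_j|\neq1$).
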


\begin{proof}
In view of Theorem \ref{EQUIVALENCESRRGC}, we only need to prove that $u$ is quasi-definite in $\PP_n$ when (\ref{DMATINITIALCONDITION}) and
(\ref{DMAT}) hold. Define $\Phi_j= \mathcal{S}_j\cdots\mathcal{S}_1\scriptsize\pmatrix{1 \cr 1}$. Then, (\ref{DMATINITIALCONDITION}), (\ref{DMAT})
and the recurrence relation for $(\Psi_j)_{j=0}^{n+r}$ yield for $j=0,\dots,n$,
$$
A\Phi_j = A\mathcal{S}_j\cdots\mathcal{S}_1\pmatrix{1 \cr 1} = \mathcal{S}_j\cdots\mathcal{S}_1\mathcal{C}_0\Psi_r =
\mathcal{C}_j\mathcal{T}_{j+r}\cdots\mathcal{T}_{r+1}\Psi_r = \mathcal{C}_j\Psi_{j+r}.
$$
Therefore, Theorem \ref{DIRECTPROBLEMTH} shows that $u$ is quasi-definite in $\PP_n$.
\end{proof}

The above results yield a direct relation between the Schur parameters of $u=vL$ and $v$, which can be obtained setting $z=0$ in the equivalent
version $\mathcal{C}_j \mathcal{B}_{j+r} = \mathcal{A}_j \tilde{\mathcal{C}}_{j-1}$ of (\ref{DMAT}) and using $C_j^*(0)=A(0)$.

\begin{cor} \label{SCHURPARAMETERSDP}
If $\alpha$ is the leading coefficient of $A$, the $j$-th Schur parameter $a_j$ of $u=vL$ can be obtained from the $j+r$-th Schur parameter $b_{j+r}$
of $v$ by
\begin{equation} \label{FSHURPARAMETERSPD}
a_j = {\alpha b_{j+r} -  \overline{D_{j-1}^*(0)} \over  C_{j-1}(0)}.
\end{equation}

\end{cor}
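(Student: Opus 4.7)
The plan is to follow the hint in the preamble to the corollary: use identity (\ref{MATILDA}) from Theorem \ref{DIRECTPROBLEMTHMF}, namely
$$
\mathcal{C}_j \mathcal{B}_{j+r} = \mathcal{A}_j \tilde{\mathcal{C}}_{j-1}, \qquad j=1,\dots,n,
$$
and simply evaluate both sides at $z=0$. Since $\mathcal{B}_{j+r}$ and $\mathcal{A}_j$ are constant in $z$, the only entries that need care are those of $\mathcal{C}_j$ and $\tilde{\mathcal{C}}_{j-1}$. From the explicit forms
$$
\mathcal{C}_j=\pmatrix{C_j & D_j \cr zD_j^* & C_j^*}, \qquad \tilde{\mathcal{C}}_{j-1}=\pmatrix{C_{j-1} & zD_{j-1} \cr D_{j-1}^* & C_{j-1}^*},
$$
I would obtain
$$
\mathcal{C}_j(0)=\pmatrix{C_j(0) & D_j(0) \cr 0 & C_j^*(0)}, \qquad \tilde{\mathcal{C}}_{j-1}(0)=\pmatrix{C_{j-1}(0) & 0 \cr D_{j-1}^*(0) & C_{j-1}^*(0)},
$$
so the off-diagonal zeros of these evaluated matrices are what will make the computation clean.

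Next I would perform the two $2\times 2$ products $\mathcal{C}_j(0)\mathcal{B}_{j+r}$ and $\mathcal{A}_j\tilde{\mathcal{C}}_{j-1}(0)$ and compare entries. The (2,1) entry gives
$$
C_j^*(0)\,\overline{b}_{j+r} = \overline{a}_j\,C_{j-1}(0) + D_{j-1}^*(0).
$$
Theorem \ref{DIRECTPROBLEMTHMF} (via Theorem \ref{GENERALPROBLEMTH}) supplies the two key facts I need here: $C_j^*(0)=A(0)$ and $C_{j-1}(0)\in\R$. Since $A$ is self-reciprocal of degree $2r$, its constant coefficient $A(0)$ is the conjugate of its leading coefficient $\alpha$, so $C_j^*(0)=\overline{\alpha}$. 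Substituting and then taking complex conjugates on both sides (using $\overline{C_{j-1}(0)}=C_{j-1}(0)$) yields
$$
\alpha\,b_{j+r} = a_j\,C_{j-1}(0) + \overline{D_{j-1}^*(0)},
$$
which, after solving for $a_j$, is exactly formula (\ref{FSHURPARAMETERSPD}).

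There is essentially no obstacle: the work has already been done in Theorems \ref{DIRECTPROBLEMTHMF} and \ref{MATRIXCOEFFPD}, which both guarantee the matrix identity (\ref{MATILDA}) and that $C_{j-1}(0)$ is a nonzero real number. The only point requiring attention is the bookkeeping of which constant is which: specifically, recognizing $A(0)=\overline{\alpha}$ (from self-reciprocity of $A$) rather than $\alpha$, so that the final conjugation step produces $\alpha$ in the numerator. The (2,2) entries on each side give the trivially consistent identity $\overline{\alpha}=\overline{\alpha}$, which serves as a sanity check; the (1,1) and (1,2) entries produce analogous relations among $C_j(0)$, $D_j(0)$, and the Schur data, but those are not needed for the corollary.
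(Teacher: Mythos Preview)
Your proof is correct and follows exactly the approach indicated in the paper: evaluate the identity $\mathcal{C}_j\mathcal{B}_{j+r}=\mathcal{A}_j\tilde{\mathcal{C}}_{j-1}$ at $z=0$ and use $C_j^*(0)=A(0)$. Your additional observations that $A(0)=\overline{\alpha}$ (by self-reciprocity) and $C_{j-1}(0)\in\R$ (so that conjugating the $(2,1)$ entry yields the stated formula) merely make explicit the bookkeeping the paper leaves implicit.
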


Theorem \ref{MATRIXCOEFFPD} and Corollary \ref{SCHURPARAMETERSDP} provide an algorithm to obtain the Schur parameters $(a_j)$ of $u=vL$ from the
Schur parameters $(b_j)$ of $v$.

\bigskip

\noindent{\bf Algorithm D}
\begin{itemize}
\item Determination of $\mathcal{C}_0\in\J_r$ from initial condition (\ref{DMATINITIALCONDITION}) and $\Psi_r$, $A$.
\item For $j=1,2,\dots$
    \begin{itemize}
    \item[$\bullet$] While $C_{j-1}(0)\neq 0$, calculation of  $a_j$ from (\ref{FSHURPARAMETERSPD}) and $b_{j+r}$, $\mathcal{C}_{j-1}$.
    \item[$\bullet$] Determination of $\mathcal{C}_j\in\J_r$ from recurrence (\ref{DMAT}) and $a_j$, $b_{j+r}$, $\mathcal{C}_{j-1}$.
    \end{itemize}
\end{itemize}

The fact that the $j$-th step of the above algorithm actually gives a matrix $\mathcal{C}_j\in\J_r$ is a consequence of Lemma \ref{CT=SC} (ii) and
the equivalence between $C_{j-1}(0)\neq0$ and $\mathcal{C}_{j-1}\in\J_r^\reg$ when $\mathcal{C}_{j-1}\in\J_r$.

In short, the fact that Algorithm D works from $j=1$ to $j=n$ will be called the {\it $n$-consistence} of recurrence (\ref{DMAT}). We will say that
the recurrence is {\it consistent} if it works for any $j\geq 1$. Of course, this is an abuse of language because the consistence depends, not only
on recurrence (\ref{DMAT}), but also on initial condition (\ref{DMATINITIALCONDITION}).

The consistence relies on the fact that $C_j(0)\neq 0$ at each step. Suppose that the recurrence fails at the $(n+1)$-th step, i.e., it is
$n$-consistent and not $(n+1)$-consistent. Then $C_{n-1}(0) \neq 0$ and $C_n(0) =0$, that is, $\mathcal{C}_{n-1}\in\J_r^\reg$ but
$\mathcal{C}_n\in\J_r\setminus\J_r^\reg$. Recurrence (\ref{DMAT}) shows that this is equivalent to $|a_{n-1}|\neq1$ and $|a_n|=1.$ So, the
$n$-consistence condition can be written as $|a_j|\neq 1$ for $j=1,\dots,n-1$, which means that $u=vL$ has a finite segment of MOP of length $n$,
i.e., it is quasi-definite in $\PP_{n-1}$.

Contrary to Theorem \ref{DETERMINANT}, Algorithm D only requires the knowledge of the Schur parameters of $v$ and a single MOP $\psi_r$ with the same
degree $r$ as the polynomial perturbation $L$. Furthermore, this algorithm makes the calculation of determinants completely unnecessary. As an
example, we will develop explicitly Algorithm D for $r=1$.

\subsubsection{The case $r=1$}

Consider a hermitian functional $v$ with MOP $(\psi_j)$ and a hermitian Laurent polynomial $L$ of degree 1. We can write $L=P+P_*$, $P(z)=\alpha
z+\beta$, $\alpha\in\C^*$, $\beta\in\R$, so $A(z)=zL(z)=\alpha z^2 + 2\beta z + \overline\alpha$. The MOP $(\varphi_j)$ of the modified functional
$u=vL$, if they exist, are given by
$$
A\varphi_j=(\alpha z+c_j)\psi_{n+1}+d_j\psi_{n+1}^*,
$$
for some $c_j\in\R$, $d_j\in\C$. This relation and its reversed can be combined in
$$
A\Phi_j=\mathcal{C}_j\Psi_{j+1}, \qquad \mathcal{C}_j=\pmatrix{\alpha z+c_j & d_j \cr \overline d_j z & \overline\alpha+c_jz}.
$$

Also, recurrence (\ref{DMAT}) becomes
$$
\cases{
c_{j-1} + \overline{d}_{j-1} a_j = c_j + d_j \overline{b}_{j+1},
\cr
\overline{\alpha} a_j = c_j b_{j+1} + d_j,
\cr
c_{j-1} a_j + d_{j-1} = \alpha b_{j+1},
}
$$
which can be written as
\begin{equation} \label{RR-D-1}
a_j = \frac{\alpha b_{j+1}-d_{j-1}}{c_{j-1}},
\quad
\pmatrix{1&\overline{b}_{j+1} \cr b_{j+1}&1} \pmatrix{c_j \cr d_j} = \pmatrix{c_{j-1}+\overline{d}_{j-1}a_j \cr \overline{\alpha}a_j}.
\end{equation}

On the other hand, initial condition (\ref{DMATINITIALCONDITION}) is equivalent to $\alpha z^2 + 2\beta z +  \overline{\alpha} = (z+c_1)(z+b_1) +
d_0(\overline{b}_1 z + 1)$, i.e.,
\begin{equation} \label{IC-D-1}
\pmatrix{1&\overline{b}_1 \cr b_1& 1} \pmatrix{c_0 \cr d_0\cr} = \pmatrix{2\beta - \alpha b_1 \cr 1}.
\end{equation}
This provides unique $c_0$, $d_0$ for any $P$ and any possible value of $b_1\in\C\setminus\T$.

Finally, Algorithm D can be explicitly formulated in the following way:
\begin{itemize}
\item Calculation of $c_0$, $d_0$ from $P$, $b_1$ using (\ref{IC-D-1}).
\item For $j=1,2,\dots,$ while $c_{j-1}\neq0$, calculation of $a_j$, $c_j$, $d_j$ from $b_{j+1}$, $c_{j-1}$, $d_{j-1}$ using (\ref{RR-D-1}).
\end{itemize}
This algorithm provides the Schur parameters of $u=vL$ and informs us about its quasi-definiteness: the maximum subspace $\PP_n$ where $u$ is
quasi-definite is given by the first index $n$ of inconsistency of the algorithm.

We can think in reducing the general problem to the case $r=1$ by factoring the polynomial $A$. Suppose that $A=A_1A_2$, $\deg A_1=2r_1$, $\deg
A_2=2r_2$, with $A_i$ self-reciprocal, and denote by $\mathcal{C}_j^{(1)}$, $\mathcal{C}_j^{(2)}$ the $J$-self-reciprocal matrices associated with
the direct problem $w=vA_1z^{-r_1}$, $u=wA_2z^{-r_2}$ respectively. If $\mathcal{U}_j$ are the transfer matrices for the functional $w$ with MOP
$(\xi_j)$ and $\Xi_j=\scriptsize\pmatrix{\xi_j \cr \xi_j^*}$, then $A_1\Xi_j=\mathcal{C}_j^{(1)}\Psi_{j+r_1}$ and
$A_2\Phi_j=\mathcal{C}_j^{(2)}\Xi_{j+r_2}$. This implies the equality $A\Phi_j=\mathcal{C}_j^{(2)}\mathcal{C}_{j+r_2}^{(1)}\Psi_{j+r}$, so
$\mathcal{C}_j=\mathcal{C}_j^{(2)}\mathcal{C}_{j+r_2}^{(1)}$. However, this does not always reduce a direct problem to simpler ones because the
length of the finite segments of MOP for $w$ can be not big enough to get the actual relations between all the MOP of $u$ and $v$.

\subsection{Inverse problem} \label{IP}

In this subsection we will study a problem which can be consider as the inverse of that one of the previous section. More precisely, given an
hermitian functional $u$ with a finite segment of MOP $(\varphi_j)_{j=0}^n$ and a hermitian Laurent polynomial $L$ of degree $r$, we will try to
obtain information about the hermitian solutions $v$ of $u=vL$ and their finite segments of MOP $(\psi_j)_{j=0}^m$.

First of all we will clarify the structure of the set $\{v \mbox{ hermitian} : u=vL\}$. The equation $u=vL$ is equivalent to $u[z^n] = v[z^nL]$,
$n\geq0$, which, denoting $\mu_n=u[z^n]$, $m_n=v[z^n]$ and $L(z)=\sum_{j=-r}^r \alpha_jz^j$, $\alpha_{-j}=\overline\alpha_j$, becomes
\begin{equation} \label{MOM}
\mu_n = \sum_{j=-r}^r \alpha_j m_{n+j}, \qquad n\geq 0.
\end{equation}
The first equation ($n=0$)
\begin{equation} \label{MOMENTS}
\mu_0 = 2\,\re\sum_{j=0}^r \alpha_j m_j
\end{equation}
is simply a constraint between the first $r+1$ moments $m_0,\dots,m_r$ of $v$. The rest of the equations determine the moments $m_n$, $n>r$. Since
any hermitian solution $v$ is determined by its moments $m_n$, $n\geq0$, the general solution depends on $2r$ real independent parameters obtained
establishing in the set $\{m_0,m_1,\dots,m_r\}$, $m_0\in \R$, $m_1,\dots,m_r\in\C$, the constraint (\ref{MOMENTS}).

There is another way to describe the set of hermitian solutions $v$ starting from a particular one $v_0$. Then the hermitian solutions are those
functionals with the form $v=v_0+\Delta$, where $\Delta$ is any hermitian functional satisfying $\Delta L=0$, i.e.,
$$
\Delta = \sum_{i=1}^p \sum_{k_i=0}^{q_i-1} M_{k_i}^{(i)} \delta^{(k_i}(z-\zeta_i),
\qquad M_{k_i}^{(i)}\in\C, \qquad \mbox{\rm $M_{k_j}^{(j)}=\overline{M}_{k_i}^{(i)}$
if $\zeta_j=1/\overline{\zeta}_i$},
$$
$\zeta_i$, $i=1,\dots,p$, being the roots of $A=z^{-r}L$ and $q_i$ the multiplicity of $\zeta_i$. Again we see that the hermitian solutions are
parametrized by $2r$ real parameters: the independent real and imaginary parts of the coefficients $M_{k_i}^{(i)}$. Furthermore, this approach shows
that the inverse problem is related to the study of the influence of Dirac's deltas and their derivatives on the quasi-definiteness and the MOP of a
hermitian functional.

For convenience we will denote by $H_r(u)$ the set of hermitian functionals $v$ which are solutions of $u=Lv$ for some hermitian Laurent polynomial
$L$ of degree $r$. The main result of this section characterizes the functionals of $H_r(u)$ which are quasi-definite in some subspace $\PP_m$.

\begin{thm} \label{INVERSEPROBLEM}
Let $u$ be quasi-definite in $\PP_n$.
\begin{itemize}
\item[(i)] If $n \geq r$, there is a (unique up to factors) solution $v \in H_r(u)$ quasi-definite in $\PP_{n+r}$ for each $b_1,\dots,b_{2r}\in\C\setminus\T$,
$b_{2r+1},\dots,b_{n+r}\in\C$, $\mathcal{X}_r,\dots,\mathcal{X}_n\in\J_r^\reg$ such that
\begin{equation} \label{IPMATINITIALCONDITION1}
\mathcal{X}_r \Phi_r = \Psi_{2r}, \qquad \Psi_{2r} = \mathcal{T}_{2r}\cdots\mathcal{T}_1\pmatrix{1 \cr 1},
\end{equation}
\begin{equation} \label{INVERSPROBLMSP1}
\kern-5pt \mathcal{T}_{j+r}  \mathcal{X}_{j-1} = \mathcal{X}_j \mathcal{S}_j, \qquad j=r+1,\dots,n.
\end{equation}
The relation between $v$ and $b_j$, $\mathcal{X}_j$ is that $\Psi_{j+r}=\mathcal{X}_j\Phi_j$ provides the $j+r$-th MOP of $v$ for $j=r,\dots,n$, and
$b_j\in\C\setminus\T$, $j=1,\dots,n+r$, are the first $n+r$ Schur parameters of $v$. Besides, $u=vL$ with $\det\mathcal{X}_j \propto A$,
$j=r,\dots,n$.
\item[(ii)] There is a (unique up to factors) solution $v \in H_r(u)$ quasi-definite in $\PP_{n+r}$ for each $b_1,\dots,b_r\in\C\setminus\T$,
$b_{r+1},\dots,b_{n+r}\in\C$, $\mathcal{X}_0,\dots,\mathcal{X}_n\in\J_r^\reg$ such that
\begin{equation} \label{IPMATINITIALCONDITION2}
\mathcal{X}_0 \pmatrix{1 \cr 1} = \Psi_r, \qquad \Psi_r = \mathcal{T}_r\cdots\mathcal{T}_1\pmatrix{1 \cr 1},
\end{equation}
\begin{equation} \label{INVERSPROBLMSP2}
\kern-22pt \mathcal{T}_{j+r}  \mathcal{X}_{j-1} = \mathcal{X}_j \mathcal{S}_j, \qquad j=1,\dots,n.
\end{equation}
The relation between $v$ and $b_j$, $\mathcal{X}_j$ is that $\Psi_{j+r}=\mathcal{X}_j\Phi_j$ provides the $j+r$-th MOP of $v$ for $j=0,\dots,n$, and
$b_j\in\C\setminus\T$, $j=1,\dots,n+r$, are the first $n+r$ Schur parameters of $v$. Besides, $u=vL$ with $\det\mathcal{X}_j \propto A$,
$j=0,\dots,n$.

\end{itemize}
\end{thm}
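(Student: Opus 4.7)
The strategy parallels that of Theorem \ref{MATRIXCOEFFPD}, now invoking the inverse characterizations of Theorem \ref{EQUIVALENCESRRGC} in place of the direct one. I will give the plan for part (i); part (ii) will follow by the same argument with the index range shifted (running $j$ from $1$ to $n$ rather than from $r+1$ to $n$) and inverse characterization I2 replacing I1.

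First, I would build the candidate MOP sequence $\psi_0=1,\psi_1,\dots,\psi_{n+r}$ from the prescribed Schur parameters via the recurrence (\ref{RR}). The crucial bootstrap step will be to upgrade the hypothesis $|b_j|\neq1$ for $j\leq 2r$ to $|b_j|\neq1$ for every $j\leq n+r$; this makes $\psi_0,\dots,\psi_{n+r}$ a genuine finite segment of MOP for a hermitian functional $v$ quasi-definite in $\PP_{n+r}$, and unique up to the real scalar $v[1]$. Taking determinants in (\ref{INVERSPROBLMSP1}) and using $\det\mathcal{S}_j=z(1-|a_j|^2)$ and $\det\mathcal{T}_{j+r}=z(1-|b_{j+r}|^2)$ yields
\[
(1-|b_{j+r}|^2)\det\mathcal{X}_{j-1}=(1-|a_j|^2)\det\mathcal{X}_j, \qquad j=r+1,\dots,n.
\]
Since each $\mathcal{X}_j\in\J_r^\reg$, both determinants have degree $2r$ and are nonzero polynomials; since $u$ is quasi-definite in $\PP_n$, $|a_j|\neq1$. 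Therefore $|b_{j+r}|\neq1$ for $j=r+1,\dots,n$, i.e., $|b_k|\neq1$ for $k=2r+1,\dots,n+r$, which combined with the hypothesis on $b_1,\dots,b_{2r}$ gives the desired quasi-definiteness of $v$ in $\PP_{n+r}$.

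Next, I would verify by induction on $j$ that $\Psi_{j+r}=\mathcal{X}_j\Phi_j$ for $j=r,\dots,n$. The base case is the initial condition (\ref{IPMATINITIALCONDITION1}); the inductive step combines (\ref{INVERSPROBLMSP1}) with the transfer-matrix relations (\ref{TM}):
\[
\Psi_{j+r}=\mathcal{T}_{j+r}\Psi_{j+r-1}=\mathcal{T}_{j+r}\mathcal{X}_{j-1}\Phi_{j-1}=\mathcal{X}_j\mathcal{S}_j\Phi_{j-1}=\mathcal{X}_j\Phi_j.
\]
Theorem \ref{EQUIVALENCESRRGC} (inverse characterization I1) will then produce a hermitian Laurent polynomial $L$ of degree $r$ with $u\equiv vL$ in $\PP_n$, and Theorem \ref{EQGENERALPROBLEMTHMF} gives $\det\mathcal{X}_j\propto A=z^rL$.

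Finally, I would promote the equivalence $u\equiv vL$ in $\PP_n$ to the identity $u=vL$ on all of $\Lambda$. After absorbing the real proportionality constant into the scalar factor of $v$, one has $u[f]=vL[f]$ for every $f\in\Lambda_{-n,n}$; the moments $m_k=v[z^k]$ for $k>n+r$ may then be defined recursively from the moment identities $\mu_k=\sum_{j=-r}^{r}\alpha_j m_{k+j}$, $k\geq n+1$, by solving for the top moment $m_{k+r}$, which is possible because the leading coefficient $\alpha_r$ of $L$ is nonzero. This places $v\in H_r(u)$; the residual freedom is exactly the rescaling $v\mapsto\lambda v$, $L\mapsto L/\lambda$ with $\lambda\in\R^*$, which accounts for the ``unique up to factors" clause. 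The main obstacle will be the determinantal bootstrap of the first step: this is where the regularity hypothesis $\mathcal{X}_j\in\J_r^\reg$ is used essentially, and it is what extends quasi-definiteness from the prescribed initial block of Schur parameters all the way out to $b_{n+r}$; the rest is routine bookkeeping with the transfer-matrix recurrences.
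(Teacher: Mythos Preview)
Your proposal is correct and follows essentially the same route as the paper's proof: the determinant bootstrap from the recurrence $\mathcal{T}_{j+r}\mathcal{X}_{j-1}=\mathcal{X}_j\mathcal{S}_j$ to force $|b_{j+r}|\neq1$, the inductive verification of $\Psi_{j+r}=\mathcal{X}_j\Phi_j$, the appeal to Theorems~\ref{EQGENERALPROBLEMTHMF} and~\ref{EQUIVALENCESRRGC}, and the moment-extension step to pass from $u\equiv vL$ in $\PP_n$ to an actual $v\in H_r(u)$ are exactly the paper's steps. The only cosmetic difference is that the paper names an auxiliary functional $\hat v$ before redefining the higher moments, whereas you modify $v$ in place; and the paper absorbs the proportionality constant into $L$ rather than into $v$, but either choice is compatible with the ``unique up to factors'' conclusion.
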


\begin{proof}
We will prove only (i), the proof of (ii) is similar. Bearing in mind Theorems \ref{EQGENERALPROBLEMTHMF} and \ref{EQUIVALENCESRRGC} we only need to
show that (\ref{IPMATINITIALCONDITION1}) and (\ref{INVERSPROBLMSP1}) imply that $\mathcal{X}_j\Phi_j$ gives for $j=r,\dots,n$ the $j+r$-th MOP of a
unique $v \in H_r(u)$ whose first $n+r$ Schur parameters are $b_j$, $j=1,\dots,n+r$.

Let us define $\Psi_j=\mathcal{T}_j\cdots\mathcal{T}_1\scriptsize\pmatrix{1 \cr 1}$. Since $\mathcal{X}_j\in\J_r^\reg$, $j=r,\dots,n$, recurrence
(\ref{INVERSPROBLMSP1}) implies that $|b_j|\neq1$, not only for $j=1,\dots,2r$, but also for $j=2r+1,\dots,n+r$. Therefore, $(\psi_j)_{j=0}^{n+r}$ is
a finite segment of MOP with respect to some hermitian functional $\hat v$.

From (\ref{IPMATINITIALCONDITION1}), (\ref{INVERSPROBLMSP1}) and the recurrence relation for $(\Phi_j)_{j=0}^n$ we obtain for $j=r,\dots,n$,
$$
\Psi_{j+r} = \mathcal{T}_{j+r}\cdots \mathcal{T}_{2r+1} \Psi_{2r} = \mathcal{T}_{j+r} \cdots \mathcal{T}_{2r+1} \mathcal{X}_r \Phi_r =
\mathcal{X}_j\mathcal{S}_j\cdots\mathcal{S}_{r+1}\Phi_r = \mathcal{X}_j \Phi_j.
$$
Hence, Theorem \ref{EQGENERALPROBLEMTHMF} proves that $u \equiv \hat v \hat L$ in $\PP_n$ for some hermitian Laurent polynomial $\hat L$ of degree
$r$. Multiplying $\hat L$ by a real factor we can get a hermitian Laurent polynomial $L$ of degree $r$ such that $u=\hat vL$ in $\PP_n$.

The equality $u=\hat vL$ in $\PP_n$, as well as the fact that $(\psi_j)_{j=0}^{n+r}$ is a finite segment of MOP for $\hat v$, only depends on the
first $n+r+1$ moments $\hat v[z^j]$, $j=0,\dots,n+r$, of $\hat v$. Let us define a new hermitian functional $v$ fixing its moments $m_j=v[z^j]$ by
$m_j=\hat v[z^j]$ for $j \leq n+r$, and $m_j$ given by (\ref{MOM}) for $j \geq n+r+1$. Then $v$ is a solution of $u=vL$, has $(\psi_j)_{j=0}^{n+r}$
as a finite segment of MOP and its first $n+r$ Schur parameters are $\psi_j(0)=b_j$, $j=1,\dots,n+r$.

Finally, the first $n+r$ Schur parameters of a functional $v$ determine its finite segment of MOP of length $n+r+1$ and, thus, its first $n+r+1$
moments up to a common factor. Requiring also $u=vL$ for a given hermitian Laurent polynomial of degree $r$ fixes the rest of the moments up to the
common factor due to (\ref{MOM}). Therefore, the conditions of (i) define a unique hermitian functional $v$ up to factors because $L$ is determined
up to real factors by $\det\mathcal{X}_j$.
\end{proof}

We have the following relation between the Schur parameters of $u$ and $v \in H_r(u)$. To prove it simply choose $z=0$ in the equivalent version
$\mathcal{B}_{j+r}  \tilde{\mathcal{X}}_{j-1} = \mathcal{X}_j \mathcal{A}_j$ of (\ref{INVERSPROBLMSP2}) and use that $X_j$ is monic, i.e.,
$X_j^*(0)=1$.

\begin{cor} \label{SCHURPARAMETERSIP}
The $j+r$-th Schur parameter $b_{j+r}$ of $v \in H_r(u)$ can be obtained from the $j$-th Schur parameter $a_j$ of $u$  by
\begin{equation}\label{SCHPINVPROBL}
b_{j+r} = { a_{j} -  \overline{Y_{j-1}^*(0)} \over  \overline{X_{j-1}(0)}}.
\end{equation}
\end{cor}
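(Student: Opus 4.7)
The plan is to follow the hint supplied by the authors: transform the inverse recurrence into its equivalent algebraic form, evaluate at $z=0$, and pick off the single scalar equation connecting $b_{j+r}$ with $a_j$.

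First, I would start from the inverse recurrence (\ref{INVERSPROBLMSP2}), namely $\mathcal{T}_{j+r}\mathcal{X}_{j-1} = \mathcal{X}_j\mathcal{S}_j$, and rewrite both sides by splitting off the factor $z$ that appears in $\mathcal{T}_{j+r}$ and $\mathcal{S}_j$. Using the explicit form $\mathcal{X}_{j-1}=\scriptsize\pmatrix{X_{j-1}&Y_{j-1} \cr zY_{j-1}^*&X_{j-1}^*}$ together with the definition of $\tilde{\mathcal{X}}_{j-1}$, a direct multiplication shows that this recurrence is equivalent to
$$
\mathcal{B}_{j+r}\,\tilde{\mathcal{X}}_{j-1}=\mathcal{X}_j\,\mathcal{A}_j,
$$
exactly in the same spirit as the passage from (\ref{MAT}) to (\ref{MATILDA}) or from (\ref{SHP}) to (\ref{SHP2}). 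This step is essentially bookkeeping: the $z$'s on either side of the recurrence redistribute neatly to convert $\mathcal{T}_{j+r}$, $\mathcal{S}_j$ into $\mathcal{B}_{j+r}$, $\mathcal{A}_j$ while transforming $\mathcal{X}_{j-1}$ into $\tilde{\mathcal{X}}_{j-1}$.

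Next, I would evaluate this matrix identity at $z=0$. The only entry that matters for the claim is the $(2,1)$ position. The left-hand side at $z=0$ gives
$$
\bigl(\mathcal{B}_{j+r}\,\tilde{\mathcal{X}}_{j-1}\bigr)_{21}\bigr|_{z=0} = \overline{b}_{j+r}\,X_{j-1}(0)+Y_{j-1}^*(0),
$$
since the $(2,1)$ entry of $\tilde{\mathcal{X}}_{j-1}$ is $Y_{j-1}^*$. The right-hand side at $z=0$ gives
$$
\bigl(\mathcal{X}_j\,\mathcal{A}_j\bigr)_{21}\bigr|_{z=0} = 0\cdot Y_j^*(0)+\overline{a}_j\,X_j^*(0)=\overline{a}_j,
$$
where I use that $X_j$ is monic of degree $r$, so $X_j^*(0)=1$ (this is the key normalization). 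Equating and solving yields
$$
\overline{b}_{j+r}=\frac{\overline{a}_j-Y_{j-1}^*(0)}{X_{j-1}(0)},
$$
which, after taking the complex conjugate, becomes exactly the formula (\ref{SCHPINVPROBL}).

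The only point requiring a word of caution is that the division is legitimate, i.e.\ $X_{j-1}(0)\neq 0$. This is guaranteed by the hypothesis that $\mathcal{X}_{j-1}\in\J_r^\reg$, since regularity of a matrix in $\J_r$ is equivalent to the non-vanishing of the leading/trailing coefficient, and $\overline{X_{j-1}(0)}$ plays that role here. I do not expect any serious obstacle: the equivalence of the two forms of the recurrence is a direct matrix computation, and the rest is a one-line extraction of coefficients.
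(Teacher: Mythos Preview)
Your proposal is correct and follows essentially the same route as the paper: the authors also pass from the inverse recurrence to its equivalent form $\mathcal{B}_{j+r}\tilde{\mathcal{X}}_{j-1}=\mathcal{X}_j\mathcal{A}_j$, evaluate at $z=0$, and use that $X_j$ is monic so $X_j^*(0)=1$. Your choice to read off the $(2,1)$ entry and then conjugate is exactly the right bookkeeping.
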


Theorem \ref{INVERSEPROBLEM} and Corollary \ref{SCHURPARAMETERSIP} provide algorithms generating the solutions of the inverse problem which are
quasi-definite in some subspace $\PP_m$. The algorithms are based on the consistence of recurrence (\ref{INVERSPROBLMSP1}) or
(\ref{INVERSPROBLMSP2}), what can be defined in a similar way to the case of Algorithm D. We have several possibilities depending of the initial
data.

If we know that $L$ has degree $r$ but not its explicit form, we can proceed in the following ways, depending whether we are interested in the
solutions which are quasi-definite (at least) in $\PP_{2r}$ or $\PP_r$.

\bigskip

\noindent{\bf Algorithm I1}
\begin{itemize}
\item Choice of $\Psi_{2r}$, i.e., of $b_1,\dots,b_{2r}\in \C\setminus\T$.
\item Determination of $\mathcal{X}_r\in\J_r$ from initial condition (\ref{IPMATINITIALCONDITION1}) and $\Phi_r$, $\Psi_{2r}$.
\item For $j=r+1,r+2,\dots$
    \begin{itemize}
    \item[$\bullet$] While $X_{j-1}(0)\neq 0$, calculation of $b_{j+r}$ from (\ref{SCHPINVPROBL}) and $a_j$, $\mathcal{X}_{j-1}$.
    \item[$\bullet$] Determination of $\mathcal{X}_j\in\J_r$ from recurrence (\ref{INVERSPROBLMSP1}) and $a_j$, $b_{j+r}$, $\mathcal{X}_{j-1}$.
    \end{itemize}
\end{itemize}

\smallskip

\noindent{\bf Algorithm I2}
\begin{itemize}
\item Choice of $\Psi_r$, i.e., of $b_1,\dots,b_r\in \C\setminus\T$.
\item Choice of a solution $\cX_0\in\J_r^\reg$ of initial condition (\ref{IPMATINITIALCONDITION2}) using $\Psi_r$, i.e.,
      choice of a monic polynomial $X_0$ of degree $r$ with $X_0(0)\neq0$ and determination of $Y_0=\psi_r-X_0$.
\item For $j=1,2,\dots$
    \begin{itemize}
    \item[$\bullet$] While $X_{j-1}(0)\neq 0$, calculation of  $b_{j+r}$ from (\ref{SCHPINVPROBL}) and $a_j$, $\mathcal{X}_{j-1}$.
    \item[$\bullet$] Determination of $\mathcal{X}_j\in\J_r$ from recurrence (\ref{INVERSPROBLMSP2}) and $a_j$, $b_{j+r}$, $\mathcal{X}_{j-1}$.
    \end{itemize}
\end{itemize}

For any of these two algorithms we recover the polynomial perturbation through $A\propto\det\mathcal{X}_j$.

On the contrary, if we know explicitly the hermitian polynomial $L$ of degree $r$, we have the following scheme to find the solutions $v$ of $u=vL$
which are quasi-definite in $\PP_r$.

\bigskip

\noindent{\bf Algorithm I3}

\begin{itemize}
\item Choice of $\Psi_{r}$ i.e., of $b_1,\dots,b_r\in \C\setminus\T$.
\item Determination of $\displaystyle\mathcal{X}_0 = {\adj(\mathcal{C}_0)\over C_0(0)}$  from initial condition (\ref{DMATINITIALCONDITION})
      and $\Psi_r$, $A$.
\item For $j=1,2,\dots,$
    \begin{itemize}
    \item[$\bullet$] While $X_{j-1}(0)\neq 0$, calculation of  $b_{j+r}$ from (\ref{SCHPINVPROBL}) and $a_j$, $\mathcal{X}_{j-1}$.
    \item[$\bullet$] Determination of $\mathcal{X}_j\in\J_r$ from recurrence (\ref{INVERSPROBLMSP2}) and $a_j$, $b_{j+r}$, $\mathcal{X}_{j-1}$.
    \end{itemize}
\end{itemize}

We can assure that any step of the above algorithms generates a matrix $\mathcal{X}_j\in\J_r$ due to Lemma \ref{CT=SC} (ii) and the fact that
$X_{j-1}(0)\neq0$ is equivalent to $\mathcal{X}_{j-1}\in\J_r^\reg$ when $\mathcal{X}_{j-1}\in\J_r$.

The $n$-consistence of the above algorithms, which means that they work for $j\leq n$, is equivalent to the existence of a finite segment of MOP of
length $n+r$ for the corresponding solution $v$ of $u=vL$. Such $n$-consistence can be written as $X_j(0)\neq 0$, $j\leq n-1$, which holds iff
$|b_j|\neq1$, $j\leq n+r-1$.

Comparing the above algorithms we see that the arbitrariness in the parameters $b_{r+1},\dots,b_{2r}$ is equivalent to the arbitrariness of the
polynomial modification $L$ of degree $r$. This means that any of the infinitely many solutions $\mathcal{X}_0\in\J_r^\reg$ of
$\mathcal{X}_0\Phi_0=\Psi_r$ should be determined by $\det\mathcal{X}_0$, a result which is proved in the next proposition.

\begin{prop} \label{REL-IC}

Given $b_1,\dots,b_r\in\C\setminus\T$ and a self-reciprocal polynomial $A$ of degree $2r$, there exist a unique solution $\cX_0\in\J_r^\reg$ of
$\cX_0\pmatrix{1 \cr 1}=\Psi_r$, $\Psi_r=\mathcal{T}_r\cdots\mathcal{T}_1\pmatrix{1 \cr 1}$, such that $\det\cX_0 \propto A$.

\end{prop}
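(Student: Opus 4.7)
The proof decomposes naturally into existence and uniqueness, and both parts exploit the interpretation of $\cX_0$ as the (normalized) adjugate of a related $J$-self-reciprocal matrix built from the decomposition of $A$ in the basis $\{\psi_r,\psi_r^*\}$.

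For existence, I would first invoke Remark~\ref{BASISNR} to obtain the unique decomposition $A = C_0\psi_r + D_0\psi_r^*$ with $C_0\in\PP_r$, $D_0\in\PP_{r-1}$. Since $\deg A = 2r$ and $D_0\psi_r^*$ has degree at most $2r-1$, necessarily $\deg C_0 = r$, so $C_0^*(0)\neq 0$. Applying $*_{2r}$ to the decomposition and using $A^* = A$ yields the companion identity $A = zD_0^*\psi_r + C_0^*\psi_r^*$, so the matrix $\cC_0 := \pmatrix{C_0 & D_0 \cr zD_0^* & C_0^*}$ lies in $\J_r$ and satisfies $\cC_0\Psi_r = A\pmatrix{1 \cr 1}$. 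By (\ref{DETAC}), $\det\cC_0 = C_0(0)\,A$, with $C_0(0)\in\R$ (Theorem~\ref{GENERALPROBLEMTH}). Under the nondegeneracy condition $C_0(0)\neq 0$, equivalent to $\cC_0\in\J_r^\reg$ and to $v[L]\neq 0$ via Remark~\ref{GENERALPROBLEMRMRK}, I would then set $\cX_0 := \adj(\cC_0)/C_0(0)$. Since $\adj$ is an involution on $2\times 2$ matrices and maps $\J_r^\reg$ into itself, $\cX_0\in\J_r^\reg$. The identity $\cX_0\cC_0 = A\,I$ converts $\cC_0\Psi_r = A\pmatrix{1 \cr 1}$ into $A\Psi_r = A\cX_0\pmatrix{1 \cr 1}$, whence $\cX_0\pmatrix{1 \cr 1}=\Psi_r$, and a one-line determinant computation gives $\det\cX_0 = A/C_0(0)\propto A$.

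For uniqueness I would reproduce almost verbatim the argument sketched in the paragraph just preceding the proposition. Given $\cX_0^{(1)},\cX_0^{(2)}\in\J_r^\reg$ both satisfying the stated conditions, say $\det\cX_0^{(2)} = \lambda\det\cX_0^{(1)}$ with $\lambda\in\R^*$, set $\cC_0^{(k)} := \adj(\cX_0^{(k)})\in\J_r$. Multiplying $\cX_0^{(k)}\pmatrix{1 \cr 1} = \Psi_r$ on the left by $\cC_0^{(k)}$ gives $(\det\cX_0^{(k)})\pmatrix{1 \cr 1} = \cC_0^{(k)}\Psi_r$, so $(\cC_0^{(2)} - \lambda\,\cC_0^{(1)})\Psi_r = 0$. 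Lemma~\ref{MATRICES} with $P=\psi_r$, $Q=\psi_r^*$ (coprime because $|b_r|\neq 1$) together with the fact that off-diagonal entries of any element of $\J_r$ lie in $\PP_{r-1}$ then forces $\cC_0^{(2)} = \lambda\,\cC_0^{(1)}$. Taking determinants and using $\det\adj M = \det M$ for $2\times 2$ matrices, this yields $\det\cX_0^{(2)} = \lambda^2\det\cX_0^{(1)}$; combined with $\det\cX_0^{(2)} = \lambda\det\cX_0^{(1)}$ this forces $\lambda = 1$, whence $\cC_0^{(2)} = \cC_0^{(1)}$ and, taking adjugates once more, $\cX_0^{(2)} = \cX_0^{(1)}$.

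I expect the only real subtlety to lie in the existence step, namely in correctly identifying and invoking the nondegeneracy $C_0(0)\neq 0$ under which $\adj(\cC_0)$ actually belongs to $\J_r$; this is the compatibility condition built into the inverse-problem setting by requiring $u = vL$ to be a meaningful starting datum. Everything else is a formal manipulation of $2\times 2$ $J$-self-reciprocal matrices, and the uniqueness reduces to a clean application of Lemma~\ref{MATRICES}.
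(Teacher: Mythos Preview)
Your existence argument takes a different route from the paper's. The paper parametrizes $\cX_0$ by the monic polynomial $X_0$ (setting $Y_0=\psi_r-X_0$), computes $\det\cX_0 = X_0^*\psi_r + X_0\psi_r^* - \psi_r\psi_r^*$ explicitly, and then solves the equation $\lambda A + \psi_r\psi_r^* = X_0^*\psi_r + X_0\psi_r^*$ by decomposing the left side via Remark~\ref{BASISNR} and symmetrizing. Your approach instead recycles the adjugate relation $\cX_0=\adj(\cC_0)/C_0(0)$ from Theorem~\ref{EQGENERALPROBLEMTHMF}, which is cleaner and avoids the explicit determinant computation. Both routes need the hypothesis $C_0(0)\neq 0$; you are right to flag it, and in fact the proposition as literally stated fails without it (take $r=1$, $b_1=0$, $A(z)=z^2+1$: then $\det\cX_0 = \overline{x}_0 z^2 + z + x_0$ is never proportional to $A$).

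Your uniqueness argument, however, has a genuine gap. Lemma~\ref{MATRICES} requires $\deg(M_2-N_2)<\deg P$ \emph{and} $\deg(M_4-N_4)<\deg P$, i.e.\ degree bounds on the full second column, not just the off-diagonal entries. Here $P=\psi_r$ has degree $r$, and the $(2,2)$ entries of $\cC_0^{(k)}=\adj(\cX_0^{(k)})$ are the monic polynomials $X_0^{(k)}$, so $M_4-N_4 = X_0^{(2)}-\lambda X_0^{(1)}$ has degree $r$ whenever $\lambda\neq 1$. This is precisely why the argument you borrowed from the paragraph preceding the proposition works only for $j\geq 1$ (where $\deg\psi_{j+r}=j+r>r$ strictly dominates every entry of $\cC^{(k)}\in\J_r$) and why the paper supplies a separate proof at $j=0$. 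The fix is easy: the first row alone gives, by the coprimality argument in the proof of Lemma~\ref{MATRICES}, that $C_0^{(2)}=\lambda C_0^{(1)}$ and $D_0^{(2)}=\lambda D_0^{(1)}$, and the $J$-self-reciprocal structure then forces equality of the second rows. Alternatively, the paper's own uniqueness argument compares the two decompositions $\hat X_0^*\psi_r + (\hat X_0-\psi_r)\psi_r^*$ and $\lambda\bigl(X_0^*\psi_r + (X_0-\psi_r)\psi_r^*\bigr)$ directly via the uniqueness in Remark~\ref{BASISNR}, which bypasses Lemma~\ref{MATRICES} entirely.
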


\begin{proof}
Given $\Psi_r$, each solution of $\cX_0\Phi_0=\Psi_r$ with the form
$$
\cX_0 = \pmatrix{X_0 & Y_0 \cr zY_0^* & X_0^*}, \quad \deg X_0 = r, \quad \deg Y_0 \leq r-1,
$$
is determined by a monic polynomial $X_0$ because $Y_0=\psi_r-X_0$. Then $\mathcal{X}_0\in\J_r^\reg$ iff $X_0(0)\neq0$. Therefore,
$\det\cX_0=X_0^*\psi_r+X_0\psi_r^*-\psi_r\psi_r^*$. Hence, if $A$ is a self-reciprocal polynomial of degree $2r$ and $\lambda\in\R$,
\begin{equation} \label{DET}
\det\cX_0 = \lambda A \kern7pt \Leftrightarrow \kern7pt \lambda A + \psi_r\psi_r^* = X_0^*\psi_r + X_0\psi_r^*.
\end{equation}

From Remark \ref{BASISNR} we know that $\lambda A + \psi_r\psi_r^* = C\psi_r+D\psi_r^*$ for some polynomials $C\in\PP_r$, $D\in\PP_{r-1}$. Since
$\lambda A + \psi_r\psi_r^*$ is self-reciprocal in $\PP_{2r}$, $(C-zD^*)\psi_r=(C^*-D)\psi_r^*$, so $C^*-D=c\psi_r$ and $C-zD^*=c\psi_r^*$ for some
$c\in\R$. Then, the identity
$$
\lambda A + \psi_r\psi_r^* = (C-\frac{c}{2}\psi_r^*)\psi_r + (D+\frac{c}{2}\psi_r)\psi_r^* = \frac{1}{2}(C+zD^*)\psi_r + \frac{1}{2}(C^*+D)\psi_r^*
$$
proves that (\ref{DET}) holds with $X_0=\frac{1}{2}(C^*+D)$, thus $\cX_0$ satisfies $\det\cX_0 = \lambda A$ with such a
choice. Furthermore, $X_0$ is monic of degree $r$ iff $X_0^*(0)=1$, which (\ref{DET}) shows that corresponds to $\lambda=X_0(0)/A(0)$.

Now, let $\cX_0,\hat\cX_0\in\J_r^\reg$ be such that $\cX_0\Phi_0=\hat\cX_0\Phi_0=\Psi_r$. Assume that $\det\hat\cX_0=\lambda\det\cX_0$ for some
$\lambda\in\R$. Using an obvious notation, this means that $\hat{X}_0^*\psi_r + (\hat{X}_0-\psi_r)\psi_r^* = \lambda(X_0^*\psi_r +
(X_0-\psi_r)\psi_r^*)$. The uniqueness of the polynomials $C,D$ in Remark \ref{BASISNR} ensures that $\hat{X}_0^*=\lambda X_0^*$ and
$\hat{X}_0-\psi_r=\lambda(X_0-\psi_r)$, which implies that $\hat\cX_0=\cX_0$. \epr

The previous results show that the solutions of the inverse problem are parametrized by their first $r$ or $2r$ Schur parameters, depending on
whether we fix the polynomial perturbation or only its degree. Of course, such a parametrization works only for the solutions which are
quasi-definite (at least) in $\PP_r$ and $\PP_{2r}$ respectively. Each of these solutions will have a finite segment of MOP of maximum length
determined by the consistence level of the corresponding algorithm.

\subsubsection{The case $r=1$}

As an example of the previous discussion we will analize the particular case of the inverse problem corresponding to a hermitian Laurent polynomial
perturbation $L$ of degree 1. So, we consider the MOP $(\varphi_j)$ with respect to a hermitian linear functional $u$ and we define the monic
polynomials $(\psi_j)$
\begin{equation} \label{CP}
\psi_{j+1} = (z + x_j ) \varphi_j  + y_j \varphi_j^*, \qquad j \geq 0,
\end{equation}
with $\psi_0(z) = 1$ and  $x_j,$ $y_j\in \mathbb{C}$. The polynomials $(\psi_j)$ are the only candidates to be MOP of a solution $v$ of $u=vL$.

We can write (\ref{CP}) in a matrix form as
\begin{equation} \label{CPMF}
\Psi_{j+1} = \mathcal{X}_j \Phi_j,\qquad \mathcal{X}_j = \pmatrix{z+x_j&y_j \cr \overline{y}_j z & 1 + \overline{x}_j z}, \qquad j\geq 0,
\end{equation}
and (\ref{INVERSPROBLMSP2}) becomes
\begin{equation} \label{MATIPS}
\cases{
x_{j-1} + \overline{y}_{j-1} b_{j+1} = x_j + y_j \overline{a}_j,
\cr
b_{j+1} = x_j a_j + y_j,
\cr
\overline{x}_{j-1} b_{j+1} + y_{j-1} = a_j,
}
\end{equation}
or equivalently,
\begin{equation} \label{MATIPS2}
b_{j+1} = \frac{a_j-y_{j-1}}{\overline x_{j-1}},
\qquad
\pmatrix{1&\overline{a}_j \cr a_j&1} \pmatrix{x_j \cr y_j} = \pmatrix{x_{j-1}+\overline{y}_{j-1}b_{j+1} \cr b_{j+1}},
\end{equation}
So, Algorithm I2 reads as follows:
\begin{itemize}
\item Choice of $b_1\in \C\setminus\T$ and $x_0\in\C^*$ which determines $y_0=b_1-x_0$.
\item For $j=1,2,\dots,$ while $x_{j-1}\neq0$, calculation of $b_{j+1}$, $x_j$, $y_j$ from $a_j$, $x_{j-1}$, $y_{j-1}$ using (\ref{MATIPS2}).
\end{itemize}

For any choice of $x_0$ we can recover the polynomial perturbation through $A \propto \det\mathcal{X}_0 = \overline{x}_0 z^2 + (1 + |x_0|^2 -
|y_0|^2)z + x_0$. According to Proposition \ref{REL-IC}, given $b_1$, each choice of $x_0$ in the previous algorithm provides a solution of the
inverse problem corresponding to a different polynomial perturbation. These solutions have well defined MOP $\psi_0$, $\psi_1$, so the algorithm
provides all the solutions of the inverse problem which are quasi-definite at least in $\PP_1$. The maximum length of the finite segments of MOP for
a particular solution is equal to the consistence level of the algorithm starting with the values $b_1$ and $x_0$ defining such solution.

It is remarkable that, when $r = 1$, the consistence of Algorithm I2 is equivalent to the compatibility of (\ref{INVERSPROBLMSP2}), i.e., any
solution of (\ref{MATIPS}) for $j \leq n$ starting with $x_0\neq0$ necessarily satisfies $x_j\neq0$ for $j \leq n-1$. We can see this by induction:
if (\ref{MATIPS}) has a solution for $j \leq n+1$, then $x_{n-1}\neq0$ due to the induction hypothesis, so $x_n=0$ would give $b_{n+1}\in\T$
according to (\ref{INVERSPROBLMSP2}); on the other hand, setting $x_n=0$ in (\ref{MATIPS}) for $j=n,n+1$ we get $y_n=b_{n+1}$ and $y_n=a_{n+1}$,
which is a contradiction because $a_{n+1}\notin\T$.

\subsubsection{An example of the inverse problem} \label{EX-INV}

As an application, we will solve the inverse problem for an arbitrary hermitian polynomial perturbation $L$ of degree 1, when $u$ is the functional
associated with the Lebesgue measure on the unit circle
$$
dm(z)=\frac{1}{2\pi i}\frac{dz}{z}=\frac{d\theta}{2\pi}, \qquad z=e^{i\theta}.
$$
More precisely, we will characterize the quasi-definite solutions $v \in H_1(u)$. Indeed, we will do something more than this because our methods
permits us to characterize all the solutions $v \in H_1(u)$ which are quasi-definite at least in $\PP_1$, providing also the maximum subspace $\PP_m$
where each of such solutions is quasi-definite.

Bearing in mind the comments at the beginning of Section \ref{IP}, and taking into account the possibilities for the roots of a self-reciprocal
polynomial $A$ of degree 2, this is equivalent to the analysis of functionals $v$ with the form
\begin{itemize}
\item[(a)]
$\ds v_0 + M\delta(z-\zeta) + \overline{M}\delta(z-1/\overline\zeta), \quad \zeta\in\D, \quad M\in\C$,
\item[(b)]
$\ds v_0 + M_1\delta(z-\zeta) + M_2\delta'(z-\zeta), \quad \zeta\in\T, \quad M_i\in\R$,
\item[(c)]
$\ds v_0 + M_1\delta(z-\zeta_1) + M_2\delta(z-\zeta_2), \quad \zeta_1\neq\zeta_2, \quad \zeta_i\in\T, \quad M_i\in\R,$
\end{itemize}
where $v_0$ is a particular solution of the inverse problem. In case (a) we can take $v_0$ as a multiple of the functional associated with the
measure $dm(z)/|z-\zeta|^2$ and then (a) is known as the Geronimus transformation of the Lebesgue measure. The Geronimus transformation of an
arbitrary positive measure on the unit circle has been studied in \cite{GaHeMa09,Ga09}, while a more general laurent polynomial transformation has
been analyzed in \cite{Su93,Ca,GaMa1,GaMa2,CaGaMa}. Our approach permits us to deal with the above three transformations simultaneously.

The functional $u$ is positive definite with MOP $\varphi_n(z) = z^n$, $n\geq0$, and Schur parameters $a_n=0$, $n\geq1$, so that (\ref{MATIPS})
becomes
\begin{equation} \label{INV-LEB}
\cases{
x_{n-1} + b_{n+1} \overline{y}_{n-1} = x_n,
\cr
b_{n+1} =  y_n,
\cr
x_{n-1} \overline{b}_{n+1} + \overline{y}_{n-1} = 0.
}
\end{equation}
Following Algorithm I2, every choice of $b_1\in\C\setminus\T$ and $x_0\in\C^*$ determines $y_0=b_1-x_0$ providing initial conditions for the above
recurrence. Each of such initial conditions is associated with a different solution of the inverse problem we are considering, and this solution is
quasi-definite exactly when the related initial conditions make (\ref{INV-LEB}) compatible for every $n\in\N$, i.e., $x_n\neq0$ for all $n$. The
corresponding orthogonal polynomials $(\psi_n)$ are
$$
\psi_{n+1}(z) = (z+x_n)z^n+y_n.
$$

The second equation in (\ref{INV-LEB}) permits us to eliminate $b_n$ and formulate equivalently the recurrence only in terms of $x_n$ and $y_n$,
\begin{equation} \label{SEJ1}
\cases{
\ds x_n = {|x_{n-1}|^2 - |y_{n-1}|^2 \over \overline{x}_{n-1}},
\medskip \cr
\ds y_n= b_{n+1} = -{y_{n-1} \over\overline{x}_{n-1}}.
}
\end{equation}
The second equation in (\ref{SEJ1}) is solved by
\begin{equation} \label{y}
y_n = (-1)^n\frac{y_0}{\overline{x}_0\cdots\overline{x}_{n-1}},
\end{equation}
so we only must care about the first equation in (\ref{SEJ1}).

If $L=P+P_*$ with $P(z)= \alpha z + \beta$, $\alpha\in\C^*$, $\beta\in\R$, we know that
$$
\det\cX_n(z) = \overline{x}_n z^2 + (1+ |x_n|^2 - |y_n|^2) z + x_n   \propto A(z) = \alpha z^2 + 2\beta z + \overline{\alpha}.
$$
Therefore,
\begin{equation} \label{DETX-A-LEB}
{x_n \over \overline{x}_n} = {\overline{\alpha} \over \alpha},
\qquad
{1 + |x_n|^2 - |y_n|^2 \over \overline{x}_n} = 2{\beta \over \alpha}.
\end{equation}
This implies that $x_n = s_n {\overline{\alpha}\over |\alpha|}$, $s_n \in \mathbb{R}$, and the first equation of (\ref{SEJ1}) is equivalent to
\begin{equation} \label{INV-LEB2}
x_n = 2\tilde{\omega} - {1 \over \overline{x}_{n-1}}, \qquad \tilde{\omega} = {\beta \over \alpha}.
\end{equation}
That is, we have reduced the compatibility of (\ref{INV-LEB}) to the compatibility of (\ref{INV-LEB2}) for $x_n$, which can be rewritten in terms of
$s_n$ as
\begin{equation} \label{INV-LEB3}
s_n = 2\omega - {1\over s_{n-1}}, \qquad  \omega = {\beta \over |\alpha|},
\end{equation}
while the compatibility means simply that $s_n\neq0$ for all $n$. If $s_j\neq0$ for $j<n$ but $s_n=0$ then the related solution is not quasi-definite
but has only the first $n+1$ MOP $\psi_0,\dots,\psi_n$.

The key idea to calculate $s_n$ is to write (\ref{INV-LEB3}) as a continued fraction
$$
s_n = 2\omega - {\;\;1\;|\over{|2\omega}} - {\;\;1\;|\over{|2\omega}} - \cdots - {\;\;1\;|\over{|2\omega}} - {\;\;1\;|\over{|\,s_0}}.
$$
According to the general theory of continued fractions (see for instance \cite{Wa48}),
$$
s_n = {s_0 Q_{n-1} - Q_{n-2} \over s_0 P_{n-1} - P_{n-2}},
$$
where $P_n$ and $Q_n$ satisfy the difference equations
$$
\begin{array}{l}
Q_k = 2\omega Q_{k-1} - Q_{k-2}, \qquad Q_0=2\omega, \quad Q_{-1}=1,
\medskip \cr
P_k = 2\omega P_{k-1} - P_{k-2}, \qquad P_0=1, \quad P_{-1}=0.
\end{array}
$$
Since $P_1=2\omega=Q_0$, we get $Q_k=P_{k+1}$.

On the other hand, the recurrence and initial conditions for $P_k$ show that $P_k=U_k(\omega)$, where $U_k$ is the second kind Tchebyshev polynomial
of degree $k$,
$$
U_k(\omega)={\lambda^{k+1}-\lambda^{-(k+1)} \over \lambda-\lambda^{-1}}, \qquad \lambda = \omega + \sqrt{\omega^2-1}.
$$
The parameter $\lambda$ is one of the roots of the characteristic polynomial
\begin{equation} \label{CHAR}
B(z) = A(-z\overline{\alpha}/|\alpha|) = z^2 - 2\omega z + 1,
\end{equation}
no matter which one because both of them are inverse of each other.

Hence,
$$
s_n = {s_0U_n(\omega)-U_{n-1}(\omega) \over s_0U_{n-1}(\omega)-U_{n-2}(\omega)}, \qquad n\geq1.
$$
As a consequence, the solution of the inverse problem is quasi-definite if and only if
\begin{equation} \label{QD-INV-LEB}
s_0 \, U_n(\omega) \neq U_{n-1}(\omega), \qquad n\geq0.
\end{equation}
In case $s_0 \, U_j(\omega) \neq U_{j-1}(\omega)$ for $j<n$ but $s_0 \, U_n(\omega) = U_{n-1}(\omega)$, the related solution of the inverse problem
is quasi-definite in $\PP_n$ but not in $\PP_{n+1}$.

Besides, from the solution for $s_n$ we can obtain the rest of the variables of interest for the inverse problem. In particular, for $n\geq1$,
$$
\begin{array}{l}
\ds x_n = \frac{\overline\alpha}{|\alpha|} {s_0U_n(\omega)-U_{n-1}(\omega) \over s_0U_{n-1}(\omega)-U_{n-2}(\omega)},
\medskip \cr
\ds b_{n+1} = y_n = \left(-\frac{\overline\alpha}{|\alpha|}\right)^n \frac{y_0}{s_0U_{n-1}(\omega)-U_{n-2}(\omega)}.
\end{array}
$$

We can express these variables, as well as the quasi-definiteness condition (\ref{QD-INV-LEB}), in terms of other parameters. For instance, following
Algorithm I2, we can use as free parameters $b_1$ and $x_0$. Then, using (\ref{DETX-A-LEB}) and the relation $y_0=b_1-x_0$, we get for some
$\kappa\in\R^*$,
\begin{equation} \label{ab-b1x0}
\alpha = \kappa \overline{x}_0, \qquad \beta = \frac{\kappa}{2} \left(1-|b_1|^2+2\,\re(\overline{x}_0b_1)\right).
\end{equation}

If we chose the approach of Algorithm I3, then the free parameters must be $b_1$ and $\alpha,\beta$, so we should express $s_0$, $x_0$ and $y_0$ in
terms of them. From (\ref{ab-b1x0}), bearing in mind that $\kappa=|\alpha|/s_0$, we obtain
\begin{equation} \label{r0x0y0} \kern-5pt
s_0 = \frac{|\alpha|}{2} \frac{1-|b_1|^2}{\beta-\re(\alpha b_1)}, \kern9pt x_0 = \frac{\overline\alpha}{2} \frac{1-|b_1|^2}{\beta-\re(\alpha b_1)},
\kern9pt y_0 = -\frac{1}{2} \frac{A(-b_1)}{\beta-\re(\alpha b_1)}.
\end{equation}

Finally, we can use the point of view of Algorithm I1. This implies that we restrict our attention to the solutions of the inverse problem which are
quasi-definite at least in $\PP_2$, an not only in $\PP_1$, which was the case till now. Then, according to Algorithm I1, $b_1$ and $b_2$ could be
used as free parameters too. This can be done using (\ref{ab-b1x0}) and the relation
$$
b_2 = y_1 = - \frac{y_0}{\overline{x}_0} = \frac{x_0-b_1}{\overline{x}_0},
$$
which determines $x_0$ as the following function of $b_1$ and $b_2$,
$$
x_0 = \frac{1}{1-|b_2|^2}(b_1+\overline{b}_1b_2).
$$
Also, $b_2$ can be expressed in terms of $\alpha$, $\beta$ and $b_1$ using (\ref{r0x0y0}), which gives
$$
b_2 = \frac{A(-b_1)}{\alpha(1-|b_1|^2)}.
$$

The fact that the iterations (\ref{INV-LEB3}) generating the solutions of the inverse problem and the quasi-definiteness condition (\ref{QD-INV-LEB})
are given in terms of $\alpha$, $\beta$ and $s_0$ uniquely suggests the possibility of using these variables to parameterize such solutions. However,
this is not possible because an arbitrary value of $\alpha$, $\beta$ and $s_0$ can be associated with no value of $b_1$ or with infinitely many
values of $b_1$. Indeed, the first identity of (\ref{r0x0y0}) can be written as
$$
|b_1-x_0|^2 = B(s_0),
$$
which shows that we have the following possibilities:
\begin{itemize}
\item If $B(s_0)<0$ there is no solution associated with $\alpha$, $\beta$ and $s_0$.
\item If $B(s_0)=0$ there is exactly one solution associated with $\alpha$, $\beta$ and $s_0$: that one determined by $\alpha$, $\beta$ and
      $b_1=x_0=s_0\overline\alpha/|\alpha|$.
\item If $B(s_0)>0$ there are infinitely many solutions associated with $\alpha$, $\beta$ and $s_0$: those ones determined by $\alpha$, $\beta$ and
      any value of $b_1$ in the circle with center $x_0$ and radius $\sqrt{B(s_0)}$. Therefore such solutions are parametrized by a phase.
\end{itemize}
In consequence, given $P(z) = \alpha z + \beta$, the inequality $B(s_0)\geq0$ determines the permitted values of $s_0$. The set of solutions
associated with $P$ and a permitted value $s_0$ will be called the circle of solutions for $P$ and $s_0$, and will be denoted $C(P,s_0)$. Eventually
$B(s_0)=0$ and $C(P,s_0)$ degenerates into a single solution. The fact that the quasi-definiteness condition depends only on $\omega=\beta/|\alpha|$
and $s_0$ means that all the solutions of $C(P,s_0)$ have the same number of MOP.

It seems that the presence of the circles of solutions with similar properties should have to do with some symmetry of the problem. The most obvious
one is the rotation symmetry. If $u=vL$, then $u_\theta=v_\theta L_\theta$ for any angle $\theta$, where the rotation of a Laurent polynomial $f$ and
a functional $v$ are defined by $f_\theta(z)=f(e^{-i\theta}z)$ and $v_\theta[f]=v[f_{-\theta}]$. When $u_\theta=u$ we find that $v \in H_1(u)$
implies $v_\theta \in H_1(u)$. The only functional $u$ which is invariant under any rotation is that one defined by the Lebesgue measure, so only in
this case we can assure that $H_1(u)$ is constituted by ``circles of solutions" obtained by the rotation of one of them.

Bearing in mind that we are identifying equivalent functionals and that the rotation of a functional preserves its quasi-definiteness properties, the
rotation symmetry permits us to reduce the analysis of the set $H_1(u)$ for the Lebesgue functional $u$ to the case $\alpha=1$ because each ``circle
of solutions" has a representative with a monic polynomial $P$. However, the reduction of the analysis to such canonical cases is not possible for
any other hermitian functional $u$.

Nevertheless, the rotation symmetry of the Lebesgue measure is not responsible of the circles of solutions $C(P,s_0)$ that we have found: the
solutions of any circle $C(P,s_0)$ have a common polynomial $P$, while the solutions of a ``circle of solutions" associated with the rotation
symmetry are related to different polynomials $P$ obtained by a rotation of one of them; furthermore, the rotation of a functional also rotates its
Schur parameters around the origin, but the parameters $b_1$ of the solutions of a circle $C(P,s_0)$ are obtained rotating one of them around
$x_0\neq0$. The search for the ``symmetry transformations" relating the functionals of a circle $C(P,s_0)$ remains as an open problem.

Some particular quasi-definite solutions deserve a special mention, i.e., the solutions with constant coefficients $x_n$, $y_n$, which are
characterized by any of the statements of the following equivalence, which follows easily from the previous results,
$$
\begin{array}{c}
s_n=s_0,  n\geq 0
 \Leftrightarrow
x_n=x_0,  n\geq 0
 \Leftrightarrow
y_n=0,  n\geq0
 \Leftrightarrow
b_n=0,  n\geq 2
 \Leftrightarrow
\medskip \cr
\Leftrightarrow
b_2=0
 \Leftrightarrow
y_0=0
 \Leftrightarrow
b_1=x_0
 \Leftrightarrow
A(-b_1)=0
 \Leftrightarrow
A(-x_0)=0
 \Leftrightarrow
B(s_0)=0.
\end{array}
$$
Therefore, these constant solutions correspond exactly to the case where a circle of solutions degenerates into a single solution. The corresponding
functionals are those ones associated with the Bernstein-Szeg\H{o} polynomials $\psi_{n+1}(z)=(z+b_1)z^n$. Since $-b_1$ must be a root of $A$, such
solutions can appear only when $A$ has roots outside the unit circle, which corresponds to the Geronimus transformation of the Lebesgue measure.

It is advisable to discuss the three possibilities (a), (b), (c) pointed out at the beginning of Section \ref{EX-INV} according to the location of
the roots of the polynomial $A$. The reason is that the qualitative behaviour of the solutions of the inverse problem depend strongly on the case at
hand. Before doing this we must remark that, since $B(z)=A(-\lambda\overline\alpha/|\alpha|)$, the roots $\zeta_1,\zeta_2$ of $A$ are related to the
roots $\lambda,\lambda^{-1}$ of $B$ through $\zeta_1=-\lambda\overline\alpha/|\alpha|$, $\zeta_2=-\lambda^{-1}\overline\alpha/|\alpha|$, and the
three cases we want to discuss can be characterized in terms of $\omega$. Concerning this discussion, notice that, once $b_1$ is fixed, any
restriction on $\omega$ becomes a restriction on the initial value $x_0$ by (\ref{ab-b1x0}).

We will comment the asymptotics in each of the cases (a), (b), (c) using the notation $p_n \sim q_n$ to mean that $\lim(p_n/q_n)=1$.

\begin{itemize}

\item[(a)] $A(z) = \alpha(z-\zeta)(z-1/\overline\zeta), \quad \zeta\in\D \quad \Leftrightarrow \quad |\omega|>1$.

This case corresponds to $B$ having two different roots $\lambda,\lambda^{-1}\in\R$, thus we can suppose $|\lambda|<1$ so that
$\zeta=-\lambda\overline\alpha/|\alpha|$. Then, the quasi-definiteness condition (\ref{QD-INV-LEB}) becomes \beq \label{QDa} s_0 \neq \lambda
\frac{1-\lambda^{2n}}{1-\lambda^{2n+2}}, \qquad n\geq0, \eeq or equivalently
$$
x_0 \neq -\zeta \frac{1-|\zeta|^{2n}}{1-|\zeta|^{2n+2}}, \qquad n\geq0,
$$
which can be also understood as a restriction on $b_1$ because, together with $A$, it determines $x_0$ through (\ref{r0x0y0}).
\newline
\hspace*{10pt} Given only $\alpha$ and $\beta$, not any value of $s_0$ is permitted because $B(s_0)$ can be negative. This happens when
$\lambda_1<s_0<\lambda_2$, where $\lambda_1,\lambda_2$ are the roots $\lambda,\lambda^{-1}$ of $B$ but ordered so that $\lambda_1<\lambda_2$.
Therefore, the values of $s_0$ associated with a solution of the inverse problem are those lying on $(-\infty,\lambda_1]\cup[\lambda_2,\infty)$.
Then, the corresponding sequence of MOP is infinite or finite depending on whether the quasi-definiteness condition (\ref{QDa}) is satisfied for any
$n$ or not.
\newline
\hspace*{10pt} There are two quasi-definite constant solutions: $s_n=\lambda$, $x_n=-\zeta=b_1$, $y_n=0$ and $s_n=\lambda^{-1}$,
$x_n=-1/\overline\zeta=b_1$, $y_n=0$. Both of them give rise to a Bernstein-Szeg\H{o} solution with $b_n=0$, $n\geq2$, but the first one is positive
definite with measure $dm(z)/|z-\zeta|^2$, while the second one is indefinite. As we will see, the solution $dm(z)/|z-\zeta|^2$ is somewhat singular
among the solutions of the inverse problem, so in what follows we will consider only $s_0\neq\lambda$, i.e., $x_0\neq-\zeta$. Then,
$$
\begin{array}{l}
\ds s_0U_n(\omega)-U_{n-1}(\omega) \sim \frac{s_0-\lambda}{1-\lambda^2}\,\lambda^{-n}, \qquad b_2 =
\frac{(b_1+\zeta)(b_1+1/\overline\zeta)}{1-|b_1|^2},
\medskip \cr
\ds b_{n+1} = y_n \sim b_2\frac{x_0(1-|\zeta|^2)}{x_0+\zeta}\,\zeta^{n-1} =
-\overline\alpha\frac{b_1+1/\overline\zeta}{\overline\alpha\overline{b}_1+\alpha\zeta}(1-|\zeta|^2)\zeta^{n-1},
\medskip \cr
\ds \lim b_n = \lim y_n = 0, \qquad \lim s_n = \lambda^{-1}, \qquad \lim x_n = -1/\overline\zeta.
\end{array}
$$
Furthermore, the related orthogonal polynomials obey the asymptotics
$$
\ba{l}
\ds \psi_{n+1}(z) \sim -\overline\alpha\frac{b_1+1/\overline\zeta}{\overline\alpha\overline{b}_1+\alpha\zeta}(1-|\zeta|^2)\zeta^{n-1},
\qquad |z|<|\zeta|,
\medskip \cr
\ds \psi_{n+1}(z) \sim \left(z-1/\overline\zeta\right)z^n, \kern89pt |z|>|\zeta|.
\ea
$$
\hspace*{10pt} We observe that the parameters of the indefinite Bernstein-Szeg\H{o} solution provide the asymptotics of the parameters for all the
solutions except for $dm(z)/|z-\zeta|^2$. Also, the indefinite Bernstein-Szeg\H{o} polynomials $(z-1/\overline\zeta)z^n$ yield the large $z$
asymptotics of the rest of MOP which solve the inverse problem, with the exception again of the positive definite Bernstein-Szeg\H{o} ones
$(z-\zeta)z^n$.

\item[(b)] $A(z) = \alpha(z-\zeta)^2, \quad \zeta\in\T \quad \Leftrightarrow \quad |\omega|=1$.

This is equivalent to state that $B$ has a double root $\lambda=\omega\in\{-1,1\}$, which is related to $\zeta$ by
$\zeta=-\lambda\overline\alpha/|\alpha|$. No quasi-definite solution with constant $x_n$ can appear now, thus $s_0\neq\lambda$ and $x_0\neq-\zeta$
for any quasi-definite solution. The confluent form of the Tchebyshev polynomials $U_n(\omega)=(n+1)\lambda^n$ yields the quasi-definiteness
condition
\begin{equation} \label{QDb}
s_0 \neq \lambda \frac{n}{n+1}, \qquad n\geq0,
\end{equation}
i.e.,
$$
x_0 \neq -\zeta \frac{n}{n+1}, \qquad n\geq0,
$$
where, once $b_1$ is chosen, $x_0$ is fixed by (\ref{r0x0y0}) with $\beta=\lambda|\alpha|$.
\newline
\hspace*{10pt} If we fix only $\alpha$ and $\beta$, then $s_0$ can take any real value because now $B$ is non-negative on $\R$.
\newline
\hspace*{10pt} We have the relations
$$
\begin{array}{l}
\ds s_0U_n(\omega)-U_{n-1}(\omega) \sim (s_0-\lambda)n\lambda^n, \qquad b_2 = \frac{(b_1+\zeta)^2}{1-|b_1|^2},
\medskip \cr
\ds b_{n+1} = y_n \sim b_2\frac{x_0}{x_0+\zeta}\frac{\zeta^{n-1}}{n} =
-\overline\alpha\frac{b_1+\zeta}{\overline\alpha\overline{b}_1+\alpha\zeta}\frac{\zeta^{n-1}}{n},
\medskip \cr
\ds \lim b_n = \lim y_n = 0, \qquad \lim s_n = \lambda, \qquad \lim x_n = -\zeta,
\end{array}
$$
and the asymptotics of the corresponding orthogonal polynomials is
$$
\ba{l}
\ds \psi_{n+1}(z) \sim -\overline\alpha\frac{b_1+\zeta}{\overline\alpha\overline{b}_1+\alpha\zeta}\frac{\zeta^{n-1}}{n},
\qquad |z|<1,
\medskip \cr
\ds \psi_{n+1}(z) \sim \left(z-\zeta\right)z^n, \kern60pt |z|>1.
\ea
$$
\hspace*{10pt} We see that in this case there is a so well defined asymptotics for any solution as in (a). However, contrary to $|\omega|>1$, the
asymptotics of the frontier case $|\omega|=1$ defines no quasi-definite solution of the inverse problem.

\item[(c)] $A(z) = \alpha(z-\zeta_1)(z-\zeta_2), \quad \zeta_1 \neq \zeta_2, \quad \zeta_k\in\T \quad \Leftrightarrow \quad |\omega|<1$.

Now $B$ has two different roots $\lambda,\overline\lambda\in\T$ so that $\zeta_1=-\lambda\overline\alpha/|\alpha|$ and
$\zeta_2=-\overline\lambda\overline\alpha/|\alpha|$. The quasi-definiteness condition (\ref{QD-INV-LEB}) reads as
$$
s_0 \im\lambda^{n+1} \neq \im\lambda^n, \qquad n\geq0,
$$
that is,
$$
\overline{x}_0 (\zeta_1^{n+1}-\zeta_2^{n+1}) \neq \zeta_2^n-\zeta_1^n, \qquad n\geq0,
$$
which again can be considered as a constraint on $b_1$ due to (\ref{r0x0y0}).
\newline
\hspace*{10pt} Concerning the possible choices of $s_0$ when fixing only $\alpha$ and $\beta$, any real value of $s_0$ is possible since $B$ is now
positive on $\R$.
\newline
\hspace*{10pt} Analogously to case (b), $s_0\neq\lambda,\overline\lambda$ and $x_0\neq-\zeta_1,-\zeta_2$ for any quasi-definite solution. Writing
$\lambda=e^{i\theta}$, $\theta\notin\Z\pi$, and $s_0-\lambda=|s_0-\lambda|e^{i\gamma}$,
$$
s_0U_n(\omega)-U_{n-1}(\omega) = |s_0-\lambda|\frac{\sin((n+1)\theta+\gamma)}{\sin\theta},
$$
thus the quasi-definiteness condition can be stated as
$$
n\theta + \gamma \notin \Z\pi, \qquad n\geq1,
$$
and we find the identities
$$
\begin{array}{l}
\ds s_n = \frac{\sin((n+1)\theta+\gamma)}{\sin(n\theta+\gamma)} = \cos\theta + \frac{\sin\theta}{\tan(n\theta+\gamma)},
\medskip \cr
\ds b_{n+1} = y_n = \left(-\frac{\overline\alpha}{|\alpha|}\right)^n \frac{y_0}{|s_0-\lambda|}\frac{\sin\theta}{\sin(n\theta+\gamma)},
\end{array}
$$
which show that in this case $s_n$ and $|b_n|$ do not converge for any quasi-definite solution.

\end{itemize}

The algorithm (\ref{INV-LEB3}) giving the solutions of the inverse problem for the Lebesgue measure can be interpreted as a Newton algorithm to find
the zeros of a function. It is instructive to discuss the different behaviour of the associated Newton algorithm depending on the values of $\omega$
and $s_0$. This approach sheds light on the different asymptotics found in cases (a), (b) and (c). Since we will discuss the behaviour depending on
the values of $\omega$ and $s_0$, we remember that, given $P$, there is a set of permitted values $s_0$ and each choice of $s_0$ determines a circle
of solutions $C(P,s_0)$ which degenerates into a single solution when $s_0$ is a root of $B$. Remember also that the solutions of such a circle have
the same number of MOP.

The Newton algorithm for a real function $f(s)$ of a real variable $s$ is given by the iteration
$$
s_n=s_{n-1}-\frac{f(s_{n-1})}{f'(s_{n-1})}.
$$
Comparing this with (\ref{INV-LEB3}) we see that the algorithm providing the parameters $s_n$ of the inverse problem for the Lebesgue measure can be
understood as the Newton algorithm for a function $f(s)$ satisfying
$$
s-\frac{f(s)}{f'(s)} = 2\omega-\frac{1}{s}.
$$
Solving the above equation we find three cases ($\lambda_1$, $\lambda_2$ are the roots of $B$):

\begin{itemize}

\item[(a)] $\ds |\omega|>1 \Rightarrow f(s) = \left(\frac{|s-\lambda_2|^{\lambda_2}}{|s-\lambda_1|^{\lambda_1}}\right)^{\frac{1}{\lambda_2-\lambda_1}}$.

\item[(b)] $\ds |\omega|=1 \Rightarrow f(s) = |s-\omega|\exp\left(\frac{\omega}{\omega-s}\right)$.

\item[(c)] $\ds |\omega|<1 \Rightarrow f(s) = \sqrt{B(s)} \exp\left(\frac{\omega}{\sqrt{1-w^2}} \arctan\left(\frac{s-w}{\sqrt{1-w^2}}\right)\right)$.

\end{itemize}

\begin{figure}
\includegraphics{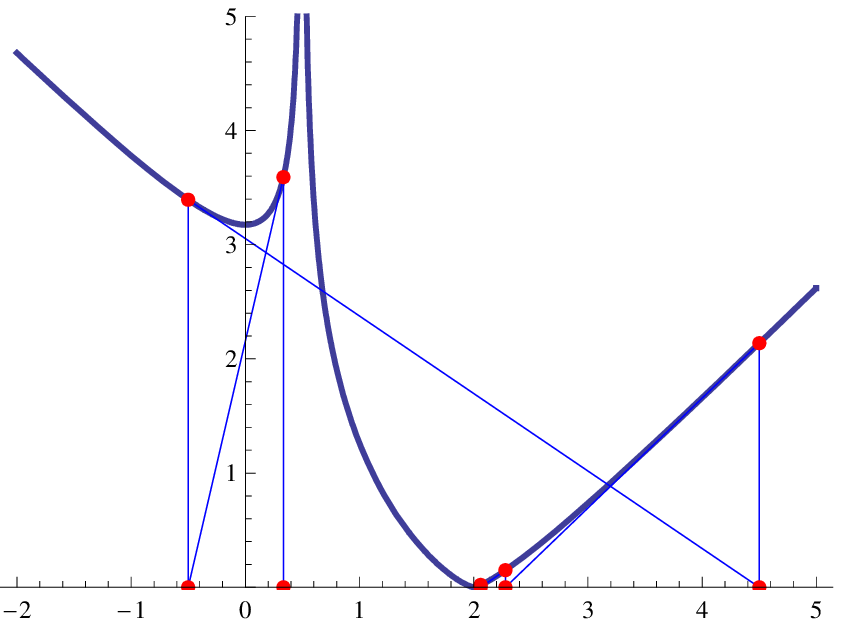}
\caption{\small (Case (a) - quasi-definite circle of solutions) First values of $s_n$ for $\omega=\frac{5}{4}$, $\lambda_1=\frac{1}{2}$,
$\lambda_2=2$, $\sigma_n=2\frac{4^n-1}{4^{n+1}-1}$, $s_0=\frac{1}{3}\notin\{\sigma_n\}$. This value of $s_0$ generates an infinite sequence $(s_n)$
such that $s_n\to\lambda_2^+$ monotonically for $n\geq2$. Hence, the solutions of the associated circle $C(P,s_0)$ are quasi-definite.} \vskip1cm
\includegraphics{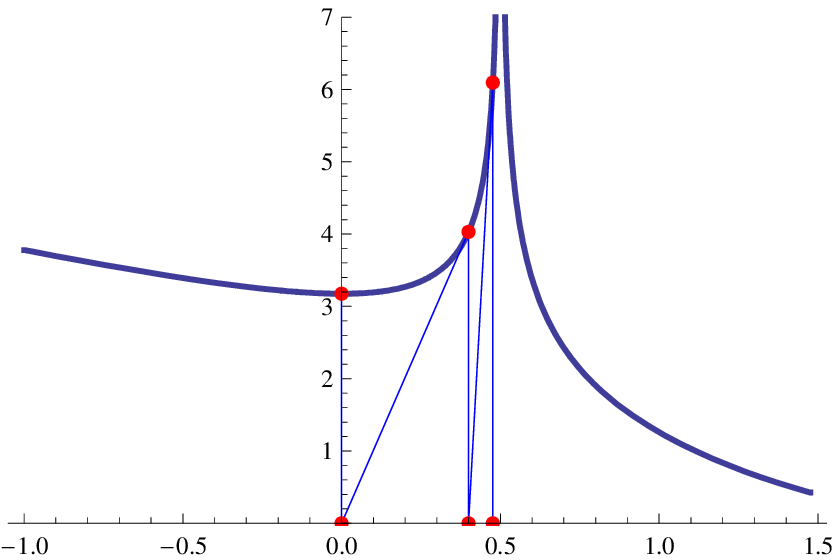}
\caption{\small (Case (a) - non quasi-definite circle of solutions) Values of $s_n$ for $\omega=\frac{5}{4}$, $\lambda_1=\frac{1}{2}$, $\lambda_2=2$,
$s_0=\sigma_2=\frac{10}{21}$. The iterations stop at $n=2$, thus the solutions of the circle $C(P,s_0)$ have only the MOP $\psi_0$, $\psi_1$,
$\psi_2$. Since the set $\{\sigma_n\}$ is infinite, there exist non quasi-definite solutions with an arbitrary number of MOP.}
\end{figure}
\begin{figure}
\includegraphics{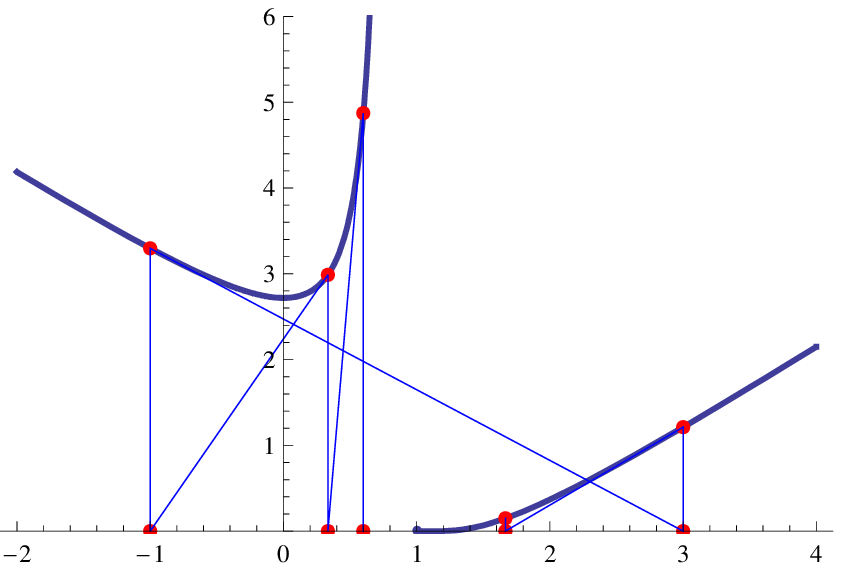}
\caption{\small (Case (b) - quasi-definite circle of solutions) First values of $s_n$ for $\omega=\lambda_1=\lambda_2=1$, $\sigma_n=\frac{n}{n+1}$,
$s_0=\frac{3}{5}\notin\{\sigma_n\}$. The situation is similar to Figure 1, but now $\lambda_1=\lambda_2$.} \vskip1cm
\includegraphics{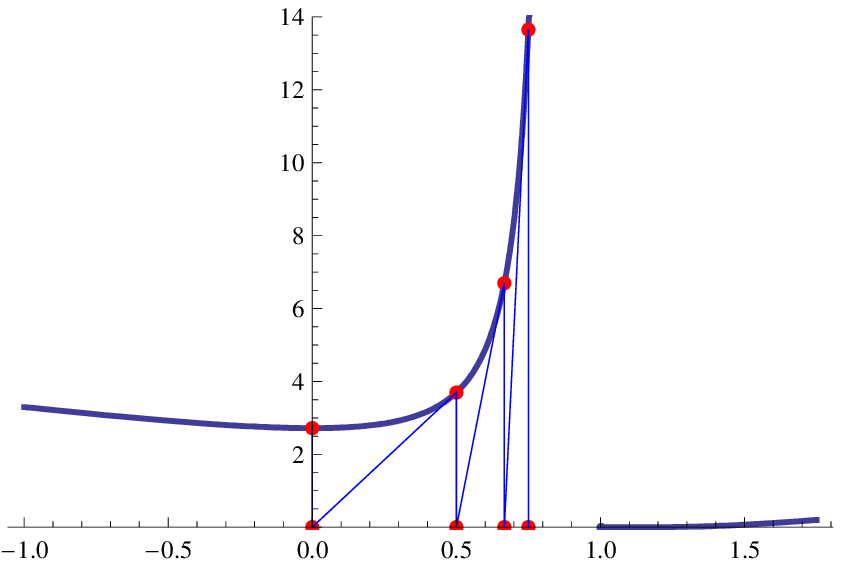} \caption{\small (Case (b) - non quasi-definite circle of solutions) Values of $s_n$ for $\omega=\lambda_1=\lambda_2=1$,
$s_0=\sigma_3=\frac{3}{4}$. The situation is similar to Figure 2 but now $\lambda_1=\lambda_2$ and we have chosen $s_0$ so that the solutions of the
circle $C(P,s_0)$ have four MOP.}
\end{figure}
\begin{figure}
\includegraphics{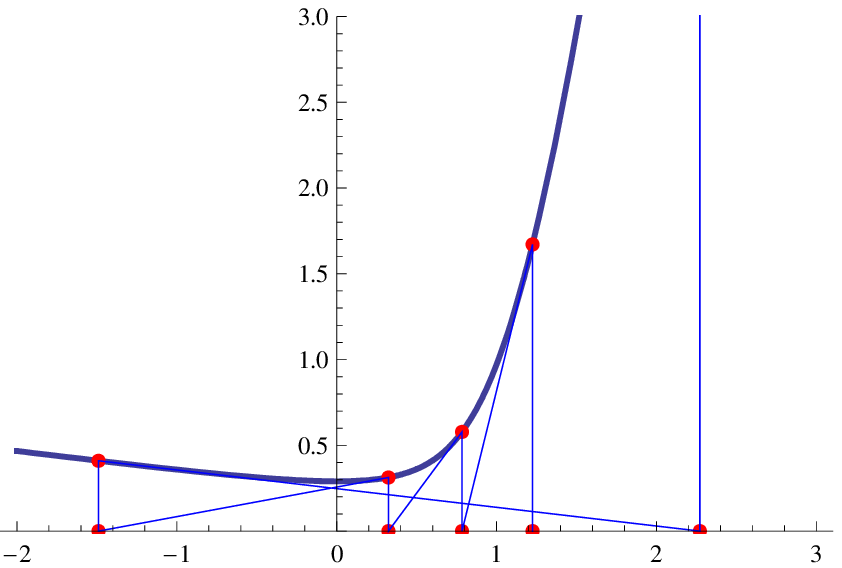}
\caption{\small (Case (c) - quasi-definite circle of solutions) First values of $s_n$ for $\omega=\frac{4}{5}$,
$\lambda_{1,2}=\frac{4}{5}\pm\frac{3}{5}i$, $\sigma_n=5\frac{\im(4+3i)^n}{\im(4+3i)^{n+1}}$, $s_0=\sqrt{\frac{3}{2}}\notin\{\sigma_n\}\subset\Q$. The
solutions of the associated circle $C(P,s_0)$ are quasi-definite because $s_0$ generates an infinite sequence $(s_n)$ which oscillates indefinitely
around the origin.} \vskip1cm
\includegraphics{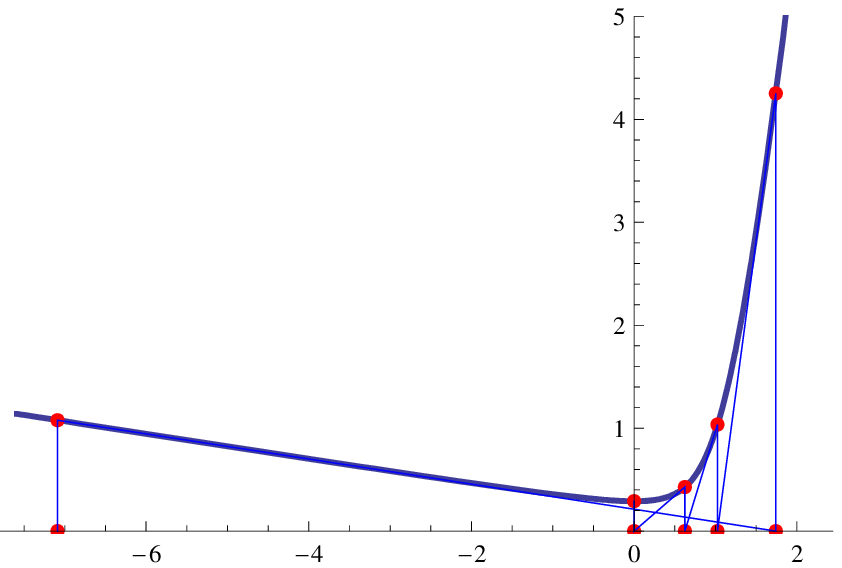}
\caption{\small (Case (c) - non quasi-definite circle of solutions) Values of $s_n$ for $\omega=\frac{4}{5}$,
$\lambda_{1,2}=\frac{4}{5}\pm\frac{3}{5}i$, $s_0=\sigma_4=-\frac{560}{79}$. The iterations stop at $n=4$, thus the solutions of the related circle
$C(P,s_0)$ have only five MOP. Like in Figures 2 and 4, the set $\{\sigma_n\}$ is infinite (but, on the contrary, $(\sigma_n)$ is not monotone
neither convergent) because $\lambda_{1,2}^2$ are not roots of the unity, so there exist non quasi-definite solutions with an arbitrary number of
MOP.}
\end{figure}
\begin{figure}
\includegraphics{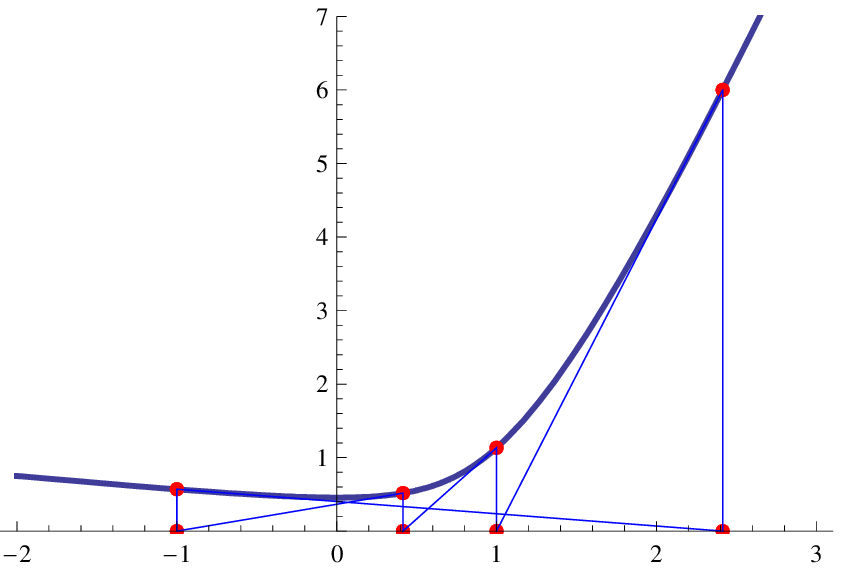}
\caption{\small (Case (c) - quasi-definite circle of solutions - periodic case) Values of $s_n$ for $\omega=\frac{1}{\sqrt{2}}$,
$\lambda_{1,2}=e^{\pm i\frac{\pi}{4}}$, $\sigma_n=\frac{\im(e^{i\frac{\pi}{4}n})}{\im(e^{i\frac{\pi}{4}(n+1)})}$,
$s_0=1\notin\{\sigma_n\}=\{0,\sqrt{2},1/\sqrt{2},\infty\}$. Like in Figure 5, the solutions of the associated circle $C(P,s_0)$ are quasi-definite
but, on the contrary, the sequences $(s_n)$ and $(\sigma_n)$ are periodic with period $4$ because $U_3(\omega)=0$.} \vskip1cm
\includegraphics{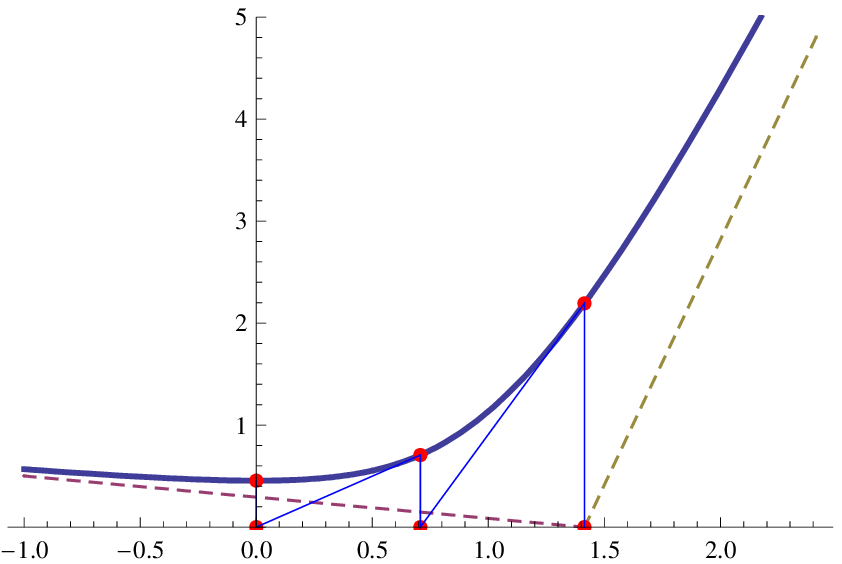}
\caption{\small (Case (c) - non quasi-definite circle of solutions - periodic case) Values of $s_n$ for $\omega=\frac{1}{\sqrt{2}}$,
$\lambda_{1,2}=e^{\pm i\frac{\pi}{4}}$, $s_0=\sigma_2=\frac{1}{\sqrt{2}}$. Like in Figure 6, the solutions of the circle $C(P,s_0)$ are non
quasi-definite, although in this case there exist only three MOP. Indeed, contrary to Figure 6, there is no non quasi-definite solution with more
than three MOP because $\sigma_n$ takes only three finite values: $\sigma_0=0$, $\sigma_1=\sqrt{2}$ and $\sigma_2=1/\sqrt{2}$. The picture, which can
be understood also as the inverse Newton algorithm starting at the origin which yields $(\sigma_n)$, shows clearly that $\sigma_3=\infty$ because the
corresponding tangent line becomes any of the two asymptotes.}
\end{figure}

The typical behaviour of the iterations in these three cases is shown in Figures 1 to 8, which represent the function $f(s)$ as well as some of these
iterations for different choices of $\omega$ and $s_0$. In any case the function $f(s)$ is analytic in $\R\setminus\{\lambda_1,\lambda_2\}$ and has a
minimum at $s=0$ which can stop the iterations, giving rise to a circle $C(P,s_0)$ of non quasi-definite solutions but with a finite segment of MOP
with the same length for all the circle.

When $|\omega|>1$ the function $f(s)$ diverges to $\infty$ at $s=\lambda$ and vanishes at $s=\lambda^{-1}$, where $\lambda$ is the root with smallest
module among $\lambda_1$, $\lambda_2$. Indeed $\lambda^{-1}$ is also the absolute minimum and, despite the visual effect in Figure 1 at
$\lambda^{-1}$, $f \in C^{(1}(\{\lambda^{-1}\})$ so $f'(\lambda^{-1})=0$. Excluding the case $s_0=\lambda$, the iterations, which must start at a
point of $(-\infty,\lambda_1]\cup[\lambda_2,\infty)$, always converge to $\lambda^{-1}$ (corresponding to a circle of quasi-definite solutions) or
they stop at the origin after a finite number of steps (corresponding to a circle of solutions with only a finite segment of MOP).

If $\omega=\pm1$, then $\lim_{s\to\lambda^\mp}f(s)=\infty$ and $\lim_{s\to\lambda^\pm}f(s)=\lim_{s\to\lambda^\pm}f'(s)=0$, where
$\lambda=\lambda_1=\lambda_2=\pm1$, which plays again the role of an attractor where the iterations converge (circle of quasi-definite solutions)
while they do not stop at the origin (circle of solutions with a finite segment of MOP).

On the contrary, $f(s)$ has no divergence neither zero when $|\omega|<1$, and the origin is then the absolute minimum. In this case, as far as the
iterations do not reach the origin (circle of solutions with a finite segment of MOP), they oscillate indefinitely around such a minimum (circle of
quasi-definite solutions).

In any case, for each value of $\omega$, the values of $s_0$ associated to non quasi-definite solutions can be obtained by the inverse Newton
algorithm starting at the origin, so they form a sequence $(\sigma_n)$ given by
\begin{equation} \label{sn}
\sigma_n=\frac{1}{2\omega-\sigma_{n-1}}, \qquad \sigma_0=0.
\end{equation}
If $s_0=\sigma_n$, then $s_j=\sigma_{n-j}\neq0$ for $j<n$ and $s_n=0$, hence the solutions of the related circle $C(P,s_0)$ have only $n+1$ MOP. When
$|\omega|\geq1$, $(\sigma_n)$ is a monotone sequence with limit $\lambda$, but if $|\omega|<1$ then $(\sigma_n)$ is non convergent and oscillates
around the origin. Eventually, $\sigma_{n-1}=2\omega$ and the above iterations stop. To understand this fact notice that (\ref{QD-INV-LEB}) shows
that
$$
\sigma_n=\frac{U_{n-1}(\omega)}{U_n(\omega)}
$$
if $U_n(\omega)\neq0$, otherwise $\sigma_n$ has no meaning because no value of $s_0$ can satisfy $s_0U_n(\omega)=U_{n-1}(\omega)$ when
$U_n(\omega)=0$. The recurrence for $U_n$ implies that $\sigma_{n-1}=2\omega$ iff $U_n(\omega)=0$, so this is exactly the case where $\sigma_n$ does
not exist and, besides, $\sigma_{n+1}=0=\sigma_0$, hence the values of $\sigma_j$, $j \geq n+1$, are simply a reiteration of the values for
$j=-1,0,\dots,n-1$ if we define $\sigma_{-1}=\infty$. Therefore, (\ref{sn}) always works for $n\geq-1$ if we assume that $\sigma_{n-1}=2\omega$ gives
$\sigma_n=\infty$, which leads to $\sigma_{n+1}=0$ and yields a periodic sequence $(\sigma_j)$ in $\R\cup\{\infty\}$ with period $n+1$.

Summarizing, if $\omega$ is a zero of $U_n$, which can hold only when $|\omega|<1$, there is a finite number of non quasi-definite circles of
solutions $C(P,s_0)$, those ones related to the initial values $s_0\in\{\sigma_j\}_{j=1}^{n-1}$. Furthermore, if $n$ is the smallest index such that
$U_n(\omega)=0$, the quantities $\sigma_j$, $j=0,\dots,n-1,$ are different from each other, hence there are exactly $n-1$ non quasi-definite circles
$C(P,s_0)$, and the length of the corresponding finite segments of MOP runs from 2 to $n$ when $s_0=\sigma_1,\dots,\sigma_{n-1}$. Therefore, there
are no non quasi-definite solutions with more than $n$ MOP.

On the contrary, if $U_n(\omega)\neq0$ for all $n$, then $\sigma_j \neq \sigma_k$ for $j \neq k$, thus an infinite denumerable set of non
quasi-definite circles $C(P,s_0)$ appear, which correspond to $s_0\in\{\sigma_j\}_{j=0}^\infty$. In this case, given any $n\in\N$, there is exactly
one non quasi-definite circle of solutions with only $n+1$ MOP, which corresponds to $s_0=\sigma_n$.

As a final remark notice that $U_n(\omega)=0$ means $\lambda^{2n+2}=1$, $\lambda\neq\pm1$. Therefore, not only the sequence $(\sigma_j)$, but also
$(U_j(\omega))$ is in this case periodic with period $n+1$, so $(s_j)$ shows such a periodic behaviour too no matter the choice of $s_0$.

\section{Applications of these techniques} \label{A}

The characterization we have obtained for hermitian functionals related by polynomial perturbations is not only interesting by itself, but provides
an efficient tool to answer different questions concerning orthogonal polynomials on the unit circle. In this section we will show two examples of
this. The first one exploits the fact that a polynomial perturbation is equivalent to a linear relation with polynomial coefficients between two
sequences of orthogonal polynomials and their reversed ones. The second one deals with a problem concerning associated polynomials, which can be
solved due to the formulation of a polynomial perturbation in terms of a difference equation for two sequences of Schur parameters.

\subsection{Orthogonal polynomials and linear combinations with constant polynomial coefficients}

There are in the literature different results on the orthogonality properties of linear combinations of orthogonal polynomials. In particular, it is
known that, if $(\varphi_n)$ and $(\psi_n)$ are MOP on the unit circle, a relation like
\begin{equation} \label{LC}
\kern-5pt \psi_{n+r} = \sum_{j=0}^r (\lambda_{j,n}\varphi_{n+j} + \kappa_{j,n}\varphi_{n+j}^*),
\kern9pt \lambda_{j,n},\kappa_{j,n}\in\C, \kern9pt \lambda_{0,n}\neq0, \kern9pt n\geq0,
\end{equation}
forces $(\psi_n)$ to be Bernstein-Szeg\H{o} polynomials when $r>1$ (see \cite{MaPeSt96}). The result is so strong that it holds assuming (\ref{LC})
only when $n \geq n_0$ for some $n_0$, and even if we suppose that the sum in (\ref{LC}) is up to and index $r(n)$ depending on $n$, with the simple
restriction $1 < r(n) \leq n/2$ for $n \geq n_0$ (see \cite{MaPeSt97}).

A way to escape from this triviality is to consider a more general relation than (\ref{LC}). Identity (\ref{LC}) implies that
$\psi_{n+r}\in(z\PP_{n-2})^{\bot_{n+r}}\subset(z^r\PP_{n-r-1})^{\bot_{n+r}}$ for $r\geq1$, where the orthogonality is understood with respect to the
functional associated with $(\varphi_n)$. Thus, Lemma \ref{BASISOC} shows that (\ref{LC}) is a particular case of
\begin{equation} \label{PC}
\psi_{n+r} = X_n\varphi_n + Y_n\varphi_n^*, \qquad X_n\in\PP_r, \qquad Y_n\in\PP_{r-1}, \qquad n\geq0.
\end{equation}
However, contrary to (\ref{LC}), a relation like (\ref{PC}) can hold for non trivial MOP $(\varphi_n)$ and $(\psi_n)$, since it is always equivalent
to a polynomial perturbation relation between the corresponding orthogonality functionals due to Theorem \ref{EQGENERALPROBLEMTHMF} and the
subsequent comments, together with Proposition \ref{RED}: the hermitian functionals $u$ and $v$ associated with $(\varphi_n)$ and $(\psi_n)$ must be
related by $u=vL$ where $L=P+P_*$ is given by a polynomial $P$ with $\deg P \leq r$; the condition $X_n(0)\neq0$, which holds for no $n$ or
simultaneously for all $n$, characterizes the case $\deg P = r$.

In this section we will show that the freedom enclosed in (\ref{PC}) is large enough to yield non trivial solutions even when imposing very strong
conditions on $X_n$ and $Y_n$. More precisely, we will find all the pairs of sequences of MOP $(\psi_n)$ and $(\varphi_n)$ related by (\ref{PC}) with
constant polynomials coefficients, i.e.,
\begin{equation} \label{CTE}
\psi_{n+r}=X\varphi_n+Y\varphi_n^*, \quad X\in\PP_r, \quad Y\in\PP_{r-1}, \quad n\geq0.
\end{equation}

This is not only an academic problem, but its importance relies on the fact that the constant solutions should play the role of fixed points with
respect to the asymptotics of the polynomials $X_n$, $Y_n$ related to the quasi-definite solutions of $H_r(u)$. Therefore, some of these fixed points
should act as attractors whose study could give information about the asymptotics for the quasi-definite solutions of $H_r(u)$, similarly to what
happens in Example \ref{EX-INV}.

Relation (\ref{CTE}) can be rewritten, together with its reversed, as
$$
\Psi_{n+r} = \mathcal{X} \Phi_n, \qquad \cX=\pmatrix{X & Y \cr zY^* & X^*}, \qquad n\geq 0,
$$
and the polynomial perturbation is recovered by $A=\det\cX$.

As follows from Theorem \ref{EQUIVALENCESRRGC} and Proposition \ref{RED}, the problem we want to solve is equivalent to the recurrence
$\mathcal{T}_{n+r} \cX = \cX \mathcal{S}_n$, $n\geq1$, and the initial condition $\cX\Phi_0=\Psi_r$, i.e.,
\begin{equation} \label{SISTCTE}
\begin{array}{l}
\cases{
\overline{a}_n Y = b_{n+r} Y^*,
\smallskip \cr
a_n X - b_{n+r} X^*= (z-1) Y,
}
\qquad n\geq1,
\medskip \cr \kern10pt
\psi_r=X+Y, \quad X\in\PP_r, \quad Y\in\PP_{r-1}.
\end{array}
\end{equation}

If $Y=0$, equations (\ref{SISTCTE}) yield $b_{n+r}X^*=a_nX$ and $\psi_r=X$. Since $\psi_r$ and $\psi_r^*$ have no common roots, we find that
$a_n=b_{n+r}=0$ for $n\geq1$. This situation corresponds to $u$ being the functional associated with the Lebesgue measure and MOP $\varphi_n(z)=z^n$,
and $v$ a Bernstein-Szeg\H{o} type functional with the first $r+1$ MOP generated by arbitrary Schur parameters $b_1,\dots,b_r\in\C\setminus\T$, while
$\psi_{n+r}(z)=z^n\psi_r(z)$ for $n\geq1$.

Let us find now the solutions with $Y\neq 0$. Denote for convenience $a=a_n$ and $b=b_{n+r}$. The first equation of (\ref{SISTCTE}) simply says that
$Y$ is proportional to a self-reciprocal polynomial in $\PP_{r-1}$ and $|b|=|a|$. Using such equation and bearing in mind that $\psi_r = X+Y$ and
$\psi_r^* = X^*+zY^*$, we can eliminate $X$ and $X^*$ in the second equation of (\ref{SISTCTE}), which becomes
\begin{equation} \label{SISTECT3}
a \psi_r - b \psi_r^* = \left[z(1-\overline{a})-(1-a)\right]Y.
\end{equation}
Therefore,
\begin{equation} \label{SISTECT2}
\ba{l}
\ds b = a \frac{\psi_r(\zeta)}{\psi_r^*(\zeta)}, \qquad \zeta=\frac{1-a}{1-\overline{a}},
\medskip \cr
\ds Y(z) = \frac{a}{1-\overline{a}} \frac{1}{\psi_r^*(\zeta)} \frac{\psi_r^*(\zeta)\psi_r(z)-\psi_r(\zeta)\psi_r^*(z)}{z-\zeta} =
\medskip \cr
\kern26pt \ds = \frac{a}{1-\overline{a}} \, \varepsilon_r \overline{\left(\frac{\zeta}{\psi_r(\zeta)}\right)}K_{r-1}(z,\zeta), \ea
\end{equation}
where we have used the Christoffel-Darboux formula for the $n$-th kernel
$K_n(z,\zeta)=\sum_{j=0}^n\varepsilon_j^{-1}\psi_j(z)\overline{\psi_j(\zeta)}$ associated with the MOP $(\psi_j)$.

As a consequence, given $\psi_r$, the solutions of (\ref{SISTCTE}) are determined by an arbitrary choice of $a\in\C\setminus\T$: (\ref{SISTECT2})
provides $b$ and $Y$ self-reciprocal in $\PP_{r-1}$ up to a factor, solving the first equation of (\ref{SISTCTE}), and finally $X=\psi_r-Y$ solves
the second equation of (\ref{SISTCTE}).

On the other hand, given $X$, $Y$, let us see how many solutions $a$, $b$ of (\ref{SISTCTE}) we can expect. If we suppose two different solutions
$a$, $b$ and $a'$, $b'$, (\ref{SISTCTE}) gives
\begin{equation} \label{UNIQUE}
\cases{
(\overline{a}-\overline{a}') Y = (b-b') Y^*,
\smallskip \cr
(a-a') X = (b-b') X^*.
}
\end{equation}
Then, $Y^* \propto Y$, $X^* \propto X$ and, using again (\ref{SISTCTE}), we find that $Y=0$ or $X \propto (z-1)Y$. In the first case $\psi_r=X$,
which is not possible because $X^* \propto X$. In the second case $Y$ divides $\psi_r=X+Y$, which implies that $Y$ is a constant because $Y^* \propto
Y$. Hence, $X(z)=z-1$ and the polynomial modification must be of degree $r=1$.

As a conclusion, given $X$, $Y$, the equations (\ref{SISTCTE}) have at most one solution $a$, $b$ when the degree $r$ of the modification is greater
than 1, or when it is equal to 1 but $X(z) \neq z-1$. Thus, concerning the MOP related by (\ref{CTE}) we have to distinguish two cases depending on
the degree $r$ of the modification.

\begin{itemize}

\item $r>1$

In this case, given $X$, $Y$, the Schur parameters $a_n$, $b_{n+r}$ must be constants of equal modulus for $n\geq1$: the unique solution $a$, $b$ of
equation (\ref{SISTCTE}). Furthermore, for any choice of $a,b_1,\dots,b_r\in\C\setminus\T$ the system (\ref{SISTCTE}) has a unique solution in $X$,
$Y$, $b$ obtained through (\ref{SISTECT2}) and the relation $X=\psi_r-Y$. In other words, the MOP related by (\ref{CTE}) are those $(\varphi_n)$
corresponding to a sequence of constant Schur parameters $(a,a,\dots)$ and those $(\psi_n)$ related to a sequence $(b_1,\dots,b_r,b,b,\dots)$ of
Schur parameters, where $a,b_1,\dots,b_r\in\C\setminus\T$ are arbitrary and $b$ is given by (\ref{SISTECT2}). The MOP related by (\ref{CTE}) are thus
parametrized by $a,b_1,\dots,b_r\in\C\setminus\T$.

\item $r=1$

If $X(z) \neq z-1$ the conclusions are similar to those corresponding to $r>1$. However, when $X(z)=z-1$ the system (\ref{SISTCTE}) has infinitely
many solutions no matter the choice of $Y=y\in\C$. To see this, let us write (\ref{SISTCTE}) explicitly,
$$
\cases{
\overline{a} y = b \overline{y},
\cr
a+b = y,
\cr
b_1=y-1.
}
$$
Since $b_1\notin\T$ forces $y\neq0$, the solutions $a$, $b$ are all the symmetric points of the perpendicular bisector $\Pi(y)$ of the segment
$[0,y]$. Therefore, the solutions corresponding to $X(z)=z-1$ can be construct in the following way: choose $b_1\in\C\setminus\T$, which determines
$y=b_1+1$; for each $n\geq1$ choose $a_n \in \Pi(y)\setminus\T$ and $b_{n+1} \in \Pi(y)$ as its symmetric point with respect to the segment $[0,y]$.
This procedure generates all the sequences of Schur parameters $(a_n)$, $(b_n)$ whose MOP $(\varphi_n)$, $(\psi_n)$ are related by
$$
\psi_{n+1}(z) = (z-1) \varphi_n(z) + y \varphi_n^*(z), \qquad y\in\C.
$$
Hence, the solutions with $X(z)=z-1$ are parametrized by $b_1\in\C\setminus\T$ and an infinite sequence $(a_1,a_2,\dots)$ lying on
$\Pi(1+b_1)\setminus\T$.
\newline
\hspace*{10pt} On the other hand, the solutions with $X(z) \neq z-1$ are parametrized by $b_1,a\in\C\setminus\T$ with $a \notin \Pi(1+b_1)$, and the
corresponding pair of sequences of Schur parameters is given by $(a,a,\dots)$ and $(b_1,b,b,\dots)$ with $b=a(\zeta+b_1)/(1+\overline{b}_1\zeta)$.
This yields all the MOP related by
$$
\psi_{n+1}(z) = (z+x) \varphi_n(z) + y \varphi_n^*(z), \qquad x,y\in\C, \qquad x\neq-1.
$$
Moreover, from this equality for $n=0$ and (\ref{SISTECT2}) we find that the parameters $x$, $y$ related to a choice of $b_1$ and $a$ are
\begin{equation} \label{xyb1a}
x = b_1-y, \qquad y = \frac{a(1-|b_1|^2)}{(1-\overline{a})+\overline{b}_1(1-a)}.
\end{equation}

\end{itemize}

Concerning the possible values of the polynomials $X$ and $Y$, we have to point out that $Y$ must be proportional to a self-reciprocal polynomial in
$\PP_{r-1}$, as follows from (\ref{SISTCTE}). Indeed, (\ref{SISTECT2}) shows that $Y(z)$ is proportional to a kernel $K_{r-1}(z,\zeta)$ for some
$\zeta\in\T$, thus it has exact degree $r-1$ unless $Y=0$. On the other hand, $X$ is a monic polynomial of degree $r$ which can not be proportional
to a self-reciprocal one unless $r=1$ and $X(z)=z-1$, as follows from the reasoning in the paragraph after (\ref{UNIQUE}). This, together with the
fact that $\psi_r=X+Y$ must be an orthogonal polynomial, are necessary conditions which must be fulfilled by the polynomial coefficients $X$, $Y$.
Nevertheless, they are not sufficient conditions for the existence of MOP satisfying (\ref{CTE}). To see this consider the case $r=1$, where these
conditions become
\begin{equation} \label{NC}
X(z)=z+x, \quad Y(z)=y, \quad x\in\C\setminus\T\cup\{-1\}, \quad x+y\in\C\setminus\T.
\end{equation}
However, solving (\ref{xyb1a}) for $b_1$ and $a$ we get
$$
b_1=x+y, \qquad a=y\frac{1+\overline{x}}{1-|x|^2},
$$
which shows that to get the alluded necessary and sufficient conditions for $r=1$ we must add to (\ref{NC}) the following one
$$
|y|\neq\left|\frac{1-|x|^2}{1+x}\right| \quad \mathrm{if} \quad |x| \neq 1.
$$

Concerning the polynomial perturbation $L=P+P_*$ such that $u=vL$, we know that $A \propto XX^*-zYY^*$. Hence, when $X(z)=z-1$ we find that $P(z)
\propto z + (|y|^2/2-1)$. As for the rest of solutions, related to Schur parameters $(a,a,\dots)$, $(b_1,\dots,b_r,b,b,\dots)$ with $b$ given in
(\ref{SISTECT2}), we only know that $\deg P \leq r$. The inequality $\deg P < r$ is characterized by any of the statements of the following
equivalence, which follows from the previous results and the recurrence for $(\psi_n)$,
$$
\begin{array}{c}
\deg P < r \Leftrightarrow X(0)=0 \Leftrightarrow Y(0)=b_r \Leftrightarrow b=b_r \Leftrightarrow
\medskip \cr
\ds \Leftrightarrow b_r=a\frac{\psi_r(\zeta)}{\psi_r^*(\zeta)} \Leftrightarrow b_r=a\frac{\psi_{r-1}(\zeta)}{\psi_{r-1}^*(\zeta)}.
\end{array}
$$
That is, among the values of $a,b_1,\dots,b_r$ which parametrize the solutions with $X(z) \neq z-1$, the inequality $\deg P < r$ holds for those ones
with $b_r$ determined by $a,b_1,\dots,b_{r-1}$ through $b_r=a\psi_{r-1}(\zeta)/\psi_{r-1}^*(\zeta)$. The solutions with $\deg P < r$ correspond to
$b_n=b$ for $n \geq r$, while the solutions with $\deg P = r$ are those ones with $(b_n)$ given by $(b_1,\dots,b_r,b,b,\dots)$, $b_r \neq b$. Notice
that each solution with $\deg P < r$ has a sequence $(b_n)$ with the form $(b_1,\dots,b_s,b,b,\dots)$, $b_s \neq b$, for some $s<r$, and then $\deg P
= s$ and one can find new polynomial coefficients $\hat X\in\PP_s$, $\hat Y\in\PP_{s-1}$ such that $\psi_{n+s}=\hat{X}\varphi_n+\hat{Y}\varphi_n^*$,
$n\geq0$. In any case, $b=a\psi_j(\zeta)/\psi_j^*(\zeta)$ for $j \geq \deg P$.

\subsection{Associated polynomials and polynomial modifications}

Given a sequence $(\psi_n)$ of MOP with Schur parameters $(b_n)$, the associated polynomials are those MOP $(\varphi_n)$ with Schur parameters
$(a_n)$, $a_n=b_{n+1}$. Despite the similarity of their Schur parameters, the corresponding orthogonality functionals can be quite different. We will
consider the following question concerning such functionals: when is the functional $u$ of the associated polynomials $(\varphi_n)$ a polynomial
modification of the functional $v$ related to the original MOP $(\psi_n)$? We will answer explicitly this question for a polynomial modification of
degree 1.

According to Theorem \ref{EQUIVALENCESRRGC}, this is equivalent to the existence of matrices $\cC_n\in\J_1$ such that $\mathcal{C}_n
\mathcal{B}_{n+1} = \mathcal{A}_n \widetilde{\mathcal{C}}_{n-1}$, $\mathcal{B}_{n+1}=\mathcal{A}_n$, $n\geq1$, with $\cC_0\in\J_1^\reg$ satisfying
the initial condition $\cC_0\Psi_1=A\Phi_0$. Let us denote $P(z) =\alpha z + \beta$, $\alpha\in\C^*$, $\beta\in\R$. The recurrence for $\cC_n$ can be
written as
$$
\pmatrix{
\alpha z + c_n & d_n
\cr
z\overline{d}_n & c_nz+\overline{\alpha}
}
\pmatrix{
1 & a_n
\cr
\overline{a}_n & 1
}
=
\pmatrix{
1 & a_n
\cr
\overline{a}_n & 1
}
\pmatrix{
\alpha z + c_{n-1} & zd_{n-1}
\cr
\overline{d}_{n-1} & c_{n-1}z+\overline{\alpha}
},
$$
for some coefficients $c_n\in\R^*$, $d_n\in\C$. Splitting this matrix recurrence gives the equivalent system of equations
\begin{equation} \label{EQ-ASS}
\cases{
c_n + \overline{a}_n d_n = c_{n-1} + a_n \overline{d}_{n-1},
\cr
a_n c_n + d_n = \overline{\alpha} a_n ,
\cr
\alpha a_n = a_n c_{n-1} + d_{n-1}.
}
\end{equation}

Taking determinants in the matrix recurrence and setting $z=0$, we find that $c_n=c_{n-1}$ for $n\geq 1$, so $c_n=c_0$ for $n \geq 0$. Therefore,
(\ref{EQ-ASS}) reads as
\begin{equation} \label{EQ-ASS2}
\cases{
\overline{a}_n d_n = a_n \overline{d}_{n-1},
\cr
a_n \left(\overline{\alpha} - c_0\right) = d_n,
\cr
a_n \left(\alpha - c_0\right) = d_{n-1},
}
\end{equation}
although the first equation is a consequence of the others.

Assume that $\alpha=c_0$. Then, $d_n=0$ for all $n$ and the initial condition is $A=\alpha(z+1)(z+b_1)$, which is not possible because $A$ is
self-reciprocal while $|b_1|\neq1$. Hence, $\alpha \neq c_0$ and the solution of (\ref{EQ-ASS2}) is
$$
a_{n+1} = \lambda^n a_1,
\qquad
d_n = \lambda^n(\alpha-c_0)a_1,
\qquad
\lambda={\overline{\alpha}-c_0 \over \alpha-c_0},
\qquad
n\geq 0.
$$
Besides, the initial condition
$$
\alpha z^2 + 2\beta z + \bar{\alpha} = \left(\alpha z + c_0\right)(z+b_1) + d_0(\overline{b}_1z + 1)
$$
yields the parameters of the polynomial perturbation,
$$
\alpha = \overline{b}_1 c_0 + \overline{d}_0,
\qquad
\beta = \frac{1}{2}(\alpha b_1 + \overline{b}_1 d_0 + c_0) = \frac{c_0}{2}(1-|b_1|^2) + \re(\alpha b_1).
$$

Taking into account that $d_0=(\alpha-c_0)a_1$, we can express $\alpha$, $\beta$, $\lambda$, $d_0$, in terms of $a_1$, $b_1$, $c_0$,
$$
\begin{array}{l}
\ds \alpha = c_0 \frac{\overline{a}_1(b_1-a_1)+(\overline{b}_1-\overline{a}_1)}{1-|a_1|^2},
\medskip \cr
\ds \beta = c_0 \left\{\frac{1}{2}(1-|b_1|^2)+\frac{\re[(\overline{a}_1(b_1-a_1)+(\overline{b}_1-\overline{a}_1))b_1]}{1-|a_1|^2}\right\},
\medskip \cr
\ds \lambda = \frac{(b_1-1)+a_1(\overline{b}_1-1)}{(\overline{b}_1-1)+\overline{a}_1(b_1-1)},
\medskip \cr
\ds d_0 = c_0 a_1 \frac{(\overline{b}_1-1)+\overline{a}_1(b_1-1)}{1-|a_1|^2}.
\end{array}
$$
The fact that $\deg P = 1$ means that $\alpha\neq0$. This only excludes the possibility $a_1 = b_1$, which gives $\lambda=1$ and thus corresponds to
the trivial case $a_n=b_n$ for all $n$, i.e., $u=v$.

Therefore, the arbitrariness in $c_0\in\R^*$ is simply the freedom of the polynomial perturbation in a multiplicative real factor, and the solutions
of the problem are parametrized by $a_1,b_1\in\C\setminus\T$ with $a_1 \neq b_1$: the MOP $(\psi_n)$ whose associated ones $(\varphi_n)$ come from a
polynomial perturbation of degree 1 of the orthogonality functional of $(\psi_n)$ are those ones with Schur parameters
$(b_1,a_1,a_1\lambda,a_1\lambda^2,\dots)$, where $\lambda\in\T$ is the square of the phase of $(b_1-1)+a_1(\overline{b}_1-1)$. The associated
polynomials $(\varphi_n)$ have Schur parameters $(a_1,a_1\lambda,a_1\lambda^2,\dots)$, so they are obtained by a rotation
$\varphi_n(z)=\lambda^n\phi_n(\overline\lambda z)$ of the MOP $(\phi_n)$ with constant Schur parameters $(a_1,a_1,a_1,\dots)$.

We can use $\alpha$, $\beta$ and $b_1$ as free parameters too. The initial condition can be expressed as
$$
\pmatrix{1 & \overline{b}_1 \cr b_1 & 1} \pmatrix{c_0 \cr d_0} = \pmatrix {\beta-\alpha b_1 \cr \overline{\alpha}},
$$
with solutions
$$
c_0 = 2 {\beta - \re(\alpha b_1) \over 1-|b_1|^2},
\qquad
d_0 = {A(-b_1) \over 1-|b_1|^2}.
$$
This gives
$$
\begin{array}{l}
\ds a_1 = \frac{d_0}{\alpha-c_0} = \frac{A(-b_1)}{\alpha(1-|b_1|^2)-2(\beta-\re(\alpha b_1))},
\medskip \cr
\ds \lambda = \frac{\overline\alpha(1-|b_1|^2)-2(\beta-\re(\alpha b_1))}{\alpha(1-|b_1|^2)-2(\beta-\re(\alpha b_1))},
\end{array}
$$
providing a solution whenever $c_0 \neq 0,\alpha$ and $|a_1|\neq1$, i.e.,
$$
\beta \neq \re(\alpha b_1), \frac{\alpha}{2}(1-|b_1|^2)+\re(\alpha b_1),
\quad
\left|\frac{A(-b_1)}{\alpha(1-|b_1|^2)-2(\beta-\re(\alpha b_1))}\right|\neq1.
$$

\bigskip

\noindent{\bf Acknowledgements}

\medskip

This work was partially supported by the Spanish grants from the Ministry of Education and Science, project code MTM2005-08648-C02-01, and the
Ministry of Science and Innovation, project code MTM2008-06689-C02-01, and by Project E-64 of Diputaci\'{o}n General de Arag\'{o}n (Spain).


\begin{thebibliography}{99}

\bibitem{ACMV}
 M. Alfaro, M. J. Cantero, L. Moral, L. Vel\'azquez,
 \emph{Hermitian modifications of Toeplitz linear functionals and orthogonal polynomials},
 New Topics in Mathematical Physics Research, Nova Science Publishers, New York, 2009, pp. 343--355.

\bibitem{AM}
 M. Alfaro, L. Moral,
 \emph{Quasi-orthogonality on the unit circle and semi-classical forms},
 Portugal. Math. {\bf 51} (1994) 47--62.

\bibitem{B}
 G. Baxter,
 \emph{Polynomials defined by a difference system},
 J. Math. Anal. Appl. {\bf 2} (1961) 223--263.

\bibitem{Ca}
 M. J. Cantero,
 \emph{Polinomios ortogonales sobre la circunferencia unidad. Modificaciones de los par\'ametros de Schur},
 Doctoral Dissertation, Universidad de Zaragoza, 1997.

\bibitem{CaGaMa}
K. Castillo,  L. Garza, F. Marcell\'{a}n,
\emph{Laurent polynomials perturbations of linear functionals. An inverse problem},
Electron. Trans. Numer. Anal. (in press).


\bibitem{DaHeMa07}
 L. Daruis, J. Hern\'{a}ndez, F. Marcell\'{a}n,
 \emph{Spectral transformations for Hermitian Toeplitz matrices},
 J. Comput. Appl. Math. {\bf 202} (2007) 155--176.

\bibitem{GaHeMa08}
 L. Garza, J. Hern\'{a}ndez, F. Marcell\'{a}n,
 \emph{Spectral transformations of measures supported on the unit circle and the Szeg\H{o} transformation},
 Numer. Algorithms {\bf 49} (2008) 169--185.

\bibitem{GaHeMa09}
 L. Garza, J. Hern\'{a}ndez, F. Marcell\'{a}n,
 \emph{Orthogonal polynomials and measures on the unit circle. The Geronimus transformation},
 J. Comput. Appl. Math. (in press).

\bibitem{Ga09}
 L. Garza,
 \emph{Transformaciones espectrales, funciones de Carath\'{e}odory y polinomios ortogonales en la circunferencia unidad},
 Doctoral Dissertation, Universidad Carlos III de Madrid, 2009.

\bibitem{GaMa1}
 L. Garza, F. Marcell\'{a}n,
 \emph{Linear spectral transformations and Laurent polynomials},
 Mediterr. J. Math. (in press).

\bibitem{GaMa2}
 L. Garza, F. Marcell\'{a}n,
 \emph{Verblunsky parameters and linear spectral transformations},
 Meth. Appl. Anal. (in press).

\bibitem{GaMa3}
 L. Garza, F. Marcell\'{a}n,
 \emph{Szeg\H{o} transformations and rational spectral transformations for associated polynomials},
 J. Comput. Appl. Math. (in press).


\bibitem{Ge2}
 Ya. L. Geronimus,
 \emph{Orthogonal Polynomials},
 Consultants Bureau, New York, 1961.

\bibitem{God}
 E. Godoy,
 \emph{Polinomios Ortogonales asociados a modificaciones de medidas},
 Doctoral Dissertation, Universidad de Santiago, 1987.

\bibitem{Go91}
 E. Godoy, F. Marcell\'{a}n,
 \emph{An analogue of the Christoffel formula for polynomial modification of a measure on the unit circle},
 Boll. Un. Mat. Ital. {\bf A 5} (1991) 1--12.

\bibitem{GodMarc}
 E. Godoy, F. Marcell\'{a}n,
 \emph{Orthogonal polynomials and rational modification of measures},
 Canad. J. Math. {\bf 45} (1993) 930--943.




\bibitem{LiMa99}
 X. Li, F. Marcell\'{a}n,
 \emph{Representation of orthogonal polynomials for modified measures},
 Commun. Anal. Theory Contin. Fract. {\bf 7} (1999) 9--22.

\bibitem{MaHe05}
 F. Marcell\'{a}n, J. Hern\'{a}ndez,
 \emph{Christoffel transforms and Hermitian linear functionals},
 Mediterr. J. Math. {\bf 2} (2005) 451--458.

\bibitem{MaHe08}
 F. Marcell\'{a}n, J. Hern\'{a}ndez,
 \emph{Geronimus spectral transforms and measures on the complex plane},
 J. Comput. Appl. Math. {\bf 217} (2008) 441--456.


\bibitem{MaPeSt96}
 F. Marcell\'{a}n, F. Peherstorfer, R. Steinbauer,
 \emph{Orthogonality properties of linear combinations of orthogonal polynomials},
 Adv. Comput. Math. {\bf 5} (1996) 281--295.

\bibitem{MaPeSt97}
 F. Marcell\'{a}n, F. Peherstorfer, R. Steinbauer,
 \emph{Orthogonality properties of linear combinations of orthogonal polynomials II},
 Adv. Comput. Math. {\bf 7} (1997) 401--428.



\bibitem{Si105}
 B. Simon,
 \emph{Orthogonal Polynomials on the Unit Circle, Part 1: Classical Theory},
 AMS Colloq. Publ., vol. 54.1, AMS, Providence, RI, 2005.

\bibitem{Si205}
 B. Simon,
 \emph{Orthogonal Polynomials on the Unit Circle, Part 2: Spectral Theory},
 AMS Colloq. Publ., vol. 54.2, AMS, Providence, RI, 2005.

\bibitem{Su93}
 C. Su\'{a}rez,
 \emph{Polinomios ortogonales relativos a modificaciones de funcionales regulares y hermitianos},
 Doctoral Dissertation, Universidad de Vigo, 1993.


\bibitem{Sz}
 G. Szeg\H o,
 \emph{Orthogonal Polynomials},
 AMS Colloq. Publ., vol. 23, AMS, 4th ed., Providence, RI, 1975.

\bibitem{CTasis}
 C. Tasis,
 \emph{Propiedades diferenciales de los polinomios ortogonales relativos a la circunferencia unidad},
 Doctoral Dissertation, Universidad de Cantabria, 2005.


\bibitem{Wa48}
 H. S. Wall,
 \emph{Analytic Theory of Continued Fractions},
 New York, Chelsea, 1948.

\end{thebibliography}
\end{document}